\documentclass[english, 12pt]{article}
\usepackage{amsfonts}
\usepackage{amsmath}
\usepackage{amssymb}
\usepackage{amsthm}
\usepackage{amscd}
\usepackage{amscd}
\usepackage{amsthm}

\textheight230mm \voffset-0mm
\textwidth160mm \hoffset-10mm
\setlength{\topmargin}{0mm}
\footskip10mm

\newtheorem{theo}{Theorem}[section]
\newtheorem{prop}{Proposition}[section]
\newtheorem{lem}{Lemma}[section]

\newtheorem{rk}{Remark}[section]

\newtheorem{exa}{Example}[section]
\newtheorem{coro}{Corollary}[section]
\numberwithin{equation}{section}

\DeclareMathOperator{\ran}{ran}
\DeclareMathOperator{\dom}{dom}

\newcommand{\aux}{\mathrm{aux}}

\newcommand{\Dir}{\mathrm{D}}

\newcommand{\ccalE}{\check{\cal{E}}}

\newcommand{\ctl}{\check{T}_t(\lam)}
\newcommand{\calD}{{\cal{D}}}
\newcommand{\calH}{{\cal{H}}}
\newcommand{\calHaux}{{\cal{H}}_{\rm aux}}
\newcommand{\Har}{{\cal{H}}_{\rm har}^J}

\def\R{{\mathbb{R}}}
\def\N{\mathbb{N}}
\def\C{\mathbb{C}}

\def\calE{{\cal{E}}}

\newcommand{\cel}{\check{\calE}_\lambda}

\newcommand{\lam}{\lambda}

\newcommand{\Om}{\Omega}

\newcommand{\from}{\colon}

\begin{document}
\bibliographystyle{alpha}

\title{ Spectral asymptotic and positivity for singular Dirichlet-to-Neumann operators }

\author{\normalsize   Ali BenAmor\footnote{University of Sousse. High School for Transport and Logistics.
 4023 Cit\'e Erriadh, Sousse. Tunisia. E-mail: ali.benamor@ipeit.rnu.tn}
}
\date{}
\maketitle
\begin{abstract}
In the framework of Hilbert spaces we shall give necessary and sufficient conditions to define a Dirichlet-to-Neumann operator via Dirichlet principle. Analyzing singular Dirichlet-to-Neumann operators, we will establish Laurent expansion near singularities as well as  Mittag--Leffler expansion for the related quadratic forms. The established  results will be exploited to  solve definitively the problem of  positivity of the related semigroup in the framework of Lebesgue spaces. The obtained results are supported by some examples where we analyze properties of singular Dirichlet-to-Neumann operators related to  Neumann and Robin Laplacian on Lipschitz domains. Among other results,  we shall demonstrate that regularity of the boundary may affect positivity and derive Mittag-Leffler expansion for the eigenvalues of singular Dirichlet-to-Neumann operators.
\end{abstract}
{\bf Key words}: Dirichlet-to-Neumann operator, spectral asymptotic, positivity preserving, Mittag--Leffler expansion.\\
{\bf MSC2010}: 35A99, 35P05, 47D03.

\section{Introduction}

Singular Dirichlet-to-Neumann (D-to-N for short) operators are one parameter family of operators which have standing singularities at eigenvalues of some Dirichlet operator. As illustration, consider the following example. Let $\calE$ be the  quadratic form associated with Neumann's Laplacian on the unit disc $\mathbb{D}$ in $\R^2$:
\begin{eqnarray*}
D(\calE)= H^1(\mathbb{D}),\ \calE[u] = \int_{\mathbb{D}} |\nabla u|^2\,dx,\ \forall\,u\in H^1(\mathbb{D} ).
\end{eqnarray*}
Let $dS$ be the surface measure on $\Gamma$. Set $L^2(\Gamma):=L^2(\Gamma,dS)$ and let $J$ be the operator 'trace on the boundary'
\[
J:H^1(\mathbb{D})\to L^2(\Gamma),\ u\mapsto u|_\Gamma.
\]
It is well known that  $J$ is bounded and
\[
\ran J=H^{1/2}(\Gamma),\ \ker J=H_0^1(\mathbb{D}).
\]
Let $-\Delta_{\mathbb{D}}$ be the Dirichlet Laplacian on $\mathbb{D}$. It is nothing else but the selfadjoint operator associated to the quadratic form $\calE|_{\ker J}$.\\
Let $\lam\in\R$ such that $\lam$ is not an eigenvalue of $-\Delta_{\mathbb{D}}$, $\psi\in H^{1/2}(\Gamma)$ and  $u_\lam\in H^1(\mathbb{D})$ be the unique solution of the boundary value problem
\begin{eqnarray*}
\label{trace1}
\left\{\begin{gathered}
-\Delta u_\lam -\lambda u_\lam=0, \quad \hbox{in } \mathbb{D}\\
u_\lam= \psi,~~~{\rm on}\  \Gamma
\end{gathered}
\right..
\label{Laplace}
\end{eqnarray*}
Define the form $\cel$ as follows:
\begin{eqnarray*}
D(\cel)= H^{1/2}(\Gamma),\ \cel[\psi]:=\calE_\lam[u_\lam]=\int_\mathbb{D} |\nabla u_\lam|^2\,dx -\lam\int_\mathbb{D} u_\lam^2\,dx\
\forall\,\psi\in H^{1/2}(\Gamma).
\end{eqnarray*}
Then $\cel$ is a closed lower semibounded and densely defined quadratic form in $L^2(\Gamma)$ (see \cite{Arendt-Mazzeo}). Let us denote by $\check L_\lam$ the selfadjoint operator associated to $\cel$ via Kato's representation theorem. The family $\check L_\lam$ is a typical family of singular Dirichlet-to-Neumann operators with singularities being  the Dirichlet eigenvalues. In fact, in case  $\lam$ is a Dirichlet eigenvalue then the above mentioned boundary value problem is not uniquely solvable: it has as many linearly independent solutions  as the multiplicity of $\lam$. Hence $\cel$ is not well defined and one  can not define a quadratic form by the described procedure. This explains the connotation 'singular'.\\
The Dirichlet eigenvalues are also singularities for the quadratic form $\cel$, the resolvent, the semigroup and the eigenvalues of  $\check L_\lam$.\\
Let us observe that, according to Dirichlet principle, it holds
\begin{align*}
\cel[\psi]=\inf\{\int_{\mathbb{D}} |\nabla u|^2\,dx -\lam\int_{\mathbb{D}} u^2\,dx,\ u\in H^1(\mathbb{D}),\  Ju=\psi\}.
\end{align*}
Now some natural questions arise: How do spectral objects of $\check L_\lam$,  behave near these singularities? How do these singularities influence positivity or sub-Markov property for the related semigroup?\\
In this paper we shall first, put the above procedure in the abstract setting of Hilbert spaces in order to   construct D-to-N operators in Hilbert spaces. Concretely, let $\calH, \calHaux$ be Hilbert spaces, $\calE$ a lower semibounded quadratic form with domain $\calD\subset \calH$ and $J$ a linear operator $J:\dom J\subset\calD\to \calHaux$ with dense range. We  shall give necessary and sufficient conditions ensuring Dirichlet principle to hold. Precisely we aim for  finding  necessary and sufficient conditions so that
\begin{align}
\ccalE[Ju]&:=\inf\{\calE(v,v)\colon\ v\in\mathcal{L}(u)\}\nonumber\\
&=\min\{\calE(v,v)\colon\ v\in\mathcal{L}(u) \},\ \forall\,u\in\calD,
\label{DP}
\end{align}
with unique minimizer, where $\mathcal{L}(u)$ is a linear manifold (to be determined) depending on the vector $u$.\\
It turns out that (\ref{DP}) holds if and only if $\dom J$ is the direct sum of the kernel of $J$ and some specific subspace. Then we shall give necessary and sufficient conditions for the quadratic form $\ccalE$ defined via Dirichlet principle, to be lower semibounded and closed in $\calHaux$. The obtained form is commonly named the trace form of $\calE$ and the related operator is commonly named the Dirichlet-to-Neumann operator.\\
Once the construction has been done we consider a positive form $\calE$ and $J$ such that $\ker J$ is dense in $\calHaux$ and the form $\calE_\Dir:=\calE|_{\ker J}$ is closed and  has discrete spectrum.  We construct the singular form $\cel$, trace of the form $\calE-\lam$, where $\lam\in\R$ is not an eigenvalue of $\calE_\Dir$. Our major contribution in this respect is to write a representation formula for $\cel$  (Theorem \ref{decomposition1}). The formula involves $\ccalE_0$, some Dirichlet operator and an abstract Poisson kernel operator. It plays a central role in the development of the paper.\\
Then we shall turn our attention to analyze some properties of the singular  D-to-N operator $\check{L}_\lam$ associated with $\cel$. Extending $\cel$ to complex values $z$, we shall show that $\ccalE_z$ is meromorphic with simple poles coinciding with the eigenvalues of  $\calE_\Dir$. At this stage our main contribution is to establish Laurent and Mittag--Leffler expansions for $\ccalE_z$ (Theorem \ref{AbstractLaurent} and Theorem \ref{ML}).\\
As a byproduct, we shall determine the exact rate of growth for $|\ccalE_z[\psi]|$ as $z$ approaches any singularity. The main input for proving the mentioned results is the representation formula of $\ccalE_z$.\\
In case $\check{L}_\lam$ has compact resolvent, pushing our analysis forward, we shall examine the behavior of the eigenvalues near the singularities.\\
As applications we shall consider the special case of $L^2$ spaces. Our major contribution in this framework is to utilize  the obtained asymptotic from  former sections to establish necessary and sufficient conditions ensuring positivity preservation property of the obtained semigroup near singularities (see Theorem \ref{Positivity-Multiple}). These conditions involve some abstract Poisson kernel operator and the eigenfunctions of the singularities. Thereby we completely solve the problem of positivity preservation in a general framework. Here the main ingredient is the Mittag--Leffler expansion for the trace form together with Beurling--Deny criterion.\\
Finally, we analyze the singular D-to-N operators related to Neumann and Robin  Laplacian on  Euclidean bounded Lipschitz domains. Here we shall be able to write Mittag--Leffler expansion of $\cel$ with coefficients depending only on Dirichlet eigenvalues and  boundary integrals of the normal derivatives of the related eigenfunctions. Moreover, in some cases we shall write Mittag--Leffler expansion for the eigenvalues of the singular D-to-N operator. The expansion involves the Dirichlet eigenvalues solely!\\
Besides, we shall demonstrate that regularity of the boundary may affect positivity as well as multiplicities of Dirichlet eigenvalues. Whereas for negative $\lam$ the semigroup is even ultracontractive.\\
We quote that construction of D-to-N operators in the setting of Hilbert spaces via Dirichlet principle was already performed by many authors \cite{Arendt1,Post,BBST}, following different approaches and under more restrictive  assumptions, such as $j$-ellipticity, elliptic regularity or closedness of $\calE$.\\
Whereas, some spectral properties for the D-to-N operator  on Lipschitz domains were established in \cite{Arendt-Mazzeo,Behrndt-Elst} and  analysis of positivity preservation for intervals and the unit disc was elaborated in \cite{Daners}.\\
However, as long as we know, there is no systematic studies neither concerning spectral asymptotic near singularities nor concerning positivity property of the semigroup related to singular D-to-N operators.
\section{D-to-N operators via Dirichlet principle}
Let $\calH, \calHaux$ be two Hilbert spaces.
Let $(\cdot,\cdot)$ and $(\cdot,\cdot)_{\aux}$ denote the scalar products on $\calH$ and $\calHaux$, respectively and $\|\cdot\|, \|\cdot\|_\aux$ be the corresponding norms.\\
We shall use the connotation 'form' for any sesquilinear symmetric form as for the related quadratic form.\\
Let $\calE$ be a lower semibounded form with domain $\calD\subseteq \calH$.
For $u\in\calD$ we abbreviate $\calE[u]:=\calE(u,u)$ and for every $\lambda\in\R$ we set
\[
\calE_\lam,\ \dom\calE_\lam=\calD,\  \calE_\lam[u] := \calE[u] - \lambda\|u\|^2.
\]
Assume we are given a linear operator $J\from \dom J\subseteq\calD\to \calHaux$ with dense range.

\subsection{The positive case}
Assume that $\calE$ is positive.\\
Being inspired by \cite[Theorem 2.1, Lemma 2.8]{BBST} let us define $\ccalE$ with domain in $\calHaux$  as follows:
\begin{equation}
  \dom \ccalE= \ran J,\quad \ccalE[Ju]:= \inf\{{\calE}[v]:\,v\in\dom J,\  Jv=Ju\}.
  \label{construction1}
\end{equation}
\begin{lem}
The functional $\ccalE$ is a quadratic form, i.e. it comes from a sesquilinear form.
\label{quadratic-1}
\end{lem}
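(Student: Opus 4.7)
The plan is to establish the parallelogram identity together with quadratic homogeneity for $\ccalE$, from which the existence of the underlying sesquilinear form follows by polarization.

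First I would verify well-definedness and homogeneity. Since $\calE$ is positive the infimum in (\ref{construction1}) is a well-defined nonnegative number (it is also finite since $v=u$ is always admissible when $Ju$ is the argument). For any scalar $\alpha\neq 0$, the map $v\mapsto\alpha v$ is a bijection between $\{v\in\dom J\colon Jv=Ju\}$ and $\{v\in\dom J\colon Jv=\alpha Ju\}$ because $\dom J$ is a linear subspace, so $\ccalE[\alpha Ju]=|\alpha|^{2}\ccalE[Ju]$. The case $\alpha=0$ is trivial because $v=0$ is admissible and gives $\calE[0]=0$.

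The core step is the parallelogram identity
\[
\ccalE[\phi+\psi]+\ccalE[\phi-\psi]=2\ccalE[\phi]+2\ccalE[\psi],\qquad \phi,\psi\in\ran J.
\]
For the inequality $\le$, pick $u_{1},u_{2}\in\dom J$ with $Ju_{1}=\phi$, $Ju_{2}=\psi$; then $u_{1}\pm u_{2}$ lie in $\dom J$ with $J(u_{1}\pm u_{2})=\phi\pm\psi$, so by definition and the parallelogram law for $\calE$ (valid because $\calE$ is a sesquilinear form),
\[
\ccalE[\phi+\psi]+\ccalE[\phi-\psi]\le\calE[u_{1}+u_{2}]+\calE[u_{1}-u_{2}]=2\calE[u_{1}]+2\calE[u_{2}].
\]
Taking infimum over the admissible $u_{1},u_{2}$ yields the desired bound. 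For the reverse inequality $\ge$, pick $v_{1},v_{2}\in\dom J$ with $Jv_{1}=\phi+\psi$, $Jv_{2}=\phi-\psi$; then $\tfrac{1}{2}(v_{1}+v_{2})$ and $\tfrac{1}{2}(v_{1}-v_{2})$ belong to $\dom J$ and map under $J$ to $\phi$ and $\psi$ respectively, so
\[
2\ccalE[\phi]+2\ccalE[\psi]\le 2\calE\bigl[\tfrac{1}{2}(v_{1}+v_{2})\bigr]+2\calE\bigl[\tfrac{1}{2}(v_{1}-v_{2})\bigr]=\calE[v_{1}]+\calE[v_{2}],
\]
again by the parallelogram law for $\calE$. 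Taking infimum over admissible $v_{1},v_{2}$ finishes the argument.

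Once homogeneity and the parallelogram identity are in hand, I would define the associated sesquilinear form by polarization,
\[
\ccalE(\phi,\psi):=\tfrac{1}{4}\sum_{k=0}^{3}i^{k}\,\ccalE[\phi+i^{k}\psi],
\]
and invoke the classical Jordan--von Neumann theorem (adapted to quadratic forms) to conclude that this is sesquilinear on $\ran J$ and reproduces $\ccalE[\phi]$ on the diagonal. The main obstacle is precisely the $\ge$ direction of the parallelogram identity: naive candidates for the infimum on the right-hand side do not combine directly into candidates for the left-hand side, and one must observe that averaging preimages of $\phi\pm\psi$ produces admissible preimages of $\phi$ and $\psi$ because $\dom J$ is closed under linear combinations.
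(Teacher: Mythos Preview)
Your argument is correct and is a genuinely different, more elementary route than the paper's. You verify homogeneity directly and establish both inequalities of the parallelogram law by the symmetric trick of averaging preimages; once $\ccalE$ is nonnegative, homogeneous of degree two, and satisfies the parallelogram identity, polarization yields the sesquilinear form (the Jordan--von~Neumann argument goes through for nonnegative, possibly degenerate, functionals since nonnegativity supplies the continuity needed to pass from $\mathbb{Q}$-linearity to $\mathbb{R}$-linearity). The paper instead regularizes: for each $\lambda<0$ it considers $\calE_\lambda=\calE-\lambda$, whose square root is a genuine norm on $\calD$; completing and using the parallelogram law for $\calE_\lambda$ shows that any minimizing sequence is Cauchy, so the infimum is attained at a \emph{unique} element $P_\lambda u$ in the completion, and uniqueness together with linearity of $J$ forces $\ccalE_\lambda$ to be quadratic. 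It then proves $\ccalE[Ju]=\lim_{\lambda\uparrow 0}\ccalE_\lambda[Ju]$ by monotonicity and concludes that $\ccalE$, as a pointwise limit of quadratic forms, is quadratic. Your approach is shorter and self-contained for this lemma; the paper's approach costs more machinery (completion, limit) but produces as a byproduct the minimizer $P_\lambda u$ and the monotone approximation $\ccalE_\lambda\downarrow\ccalE$, both of which are themes it reuses later.
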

\begin{proof}
For every $\lam<0$ we define
\begin{equation}
\ccalE_\lam:\  \dom \ccalE_\lam = \ran J,\quad \ccalE_\lam[Ju]:= \inf\{\calE_\lam[v]:\,v\in\dom J,\  Jv=Ju\}.
\end{equation}
Plainly $\calE_\lam^{1/2}$ is a norm on $\calD$. Let $\calD_\lam$ be the completion of $\calD$ with respect to $\calE_\lam^{1/2}$. We still denote by $\calE_\lam$ the extension of $\calE_\lam$ on $\calD_\lam$.\\
Let $u\in\dom J$. Then there is a sequence $(v_n)\subset\dom J$ such that $Jv_n=Ju$ for every $n$ and $\lim_{n\to\infty}\calE_\lam[v_n]=\ccalE_\lam[Ju]$. The fact that $J(1/2 v_n + 1/2 v_m)=Ju$ in conjunction with the identity
\[
\calE_\lam[\frac{1}{2}(v_n - v_m)] + \calE_\lam[\frac{1}{2}(v_n + v_m)]
= \frac{1}{2}\calE_\lam[v_n] + \frac{1}{2}\calE_\lam[v_m],
\]
imply that the sequence $(v_n)$ is in fact $\calE_\lam$-Cauchy. Hence there is $P_\lam u\in\calD_\lam$ such that $\ccalE_\lam[Ju]= \calE_\lam[P_\lam u]$. Moreover as $\calE_\lam^{1/2}$ is a norm we conclude that $P_\lambda u$ is unique.\\
On the other hand, as $J$ is linear, $\calE_\lam$ is a quadratic form and $P_\lam u$ is unique we obtain  that $\ccalE_\lam$ is a quadratic form for any $\lam<0$.\\
From the very definition we infer that the family $\ccalE_\lam$ is monotone decreasing as $\lam\uparrow 0$. Now the proof follows the lines of the proof of \cite[Lemma 2.8]{BBST}, which we reproduce for the convenience of the reader.\\
We claim that
\[
\ccalE[Ju]=\lim_{\lam\uparrow 0}\ccalE_\lam[Ju],\ \forall\,u\in\dom J.
\]
Indeed, the monotony of $\ccalE_\lam$ implies that $\lim_{\lam\uparrow 0}\ccalE_\lam[Ju]=\inf_{\lam<0}\ccalE_\lam[Ju]$. Accordingly, we obtain $\lim_{\lam\uparrow 0}\ccalE_\lam[Ju]\geq\ccalE[Ju]$.\\
Conversely, let $v\in\dom J$ be such that $Jv=Ju$. Then $\ccalE_\lam[Ju]\leq\calE_\lam[v]$. Thus $\lim_{\lam\uparrow 0}\ccalE_\lam[Ju]\leq \calE[v]$. Passing to the infimum on $v$ we obtain $\lim_{\lam\uparrow 0}\ccalE_\lam[Ju]\leq \ccalE[Ju]$ and the claim is proved.
\end{proof}

We aim to find necessary and sufficient conditions for the quadratic form $\ccalE$ to obeys the Dirichlet principle. Namely, conditions ensuring
\[
\inf\{{\calE}[v]:\,v\in\dom J,\  Jv=Ju\}=\min \{{\calE}[v]:\,v\in\dom J,\  Jv=Ju\},
\]
with a unique minimizer.\\
To achieve our goal  we introduce the linear subspace defined by
\[
\Har:=\{ u\in\dom J,\ \calE(u,v)=0,\ \forall\,v\in\ker J\},
\]
and for every $u\in\dom J$ we designate by $C_u$ the linear manifold
\[
C_u:=u+\ker J.
\]
The subscript 'har' stands for 'harmonic' as indicated by the first example.\\
Let us observe that  the infimum can be written as
\begin{align*}
  \inf\{{\calE}[v]:\; v\in C_u\}.
\end{align*}
Let us first solve the uniqueness problem.
\begin{theo}
Assume that for each $u\in\dom J$ the infimum is attained at some $Pu$. Then
\begin{enumerate}
\item  $Pu$ should satisfies
\begin{equation}
Pu\in C_u,\ \calE(Pu,v)=0,\ \forall\,v\in\ker J.
\end{equation}
In particular, $Pu\in\Har$ and
\[
 \ccalE[Ju]= \min\{\calE[v],\ v\in\Har\cap C_u\}.
\]
\item {\em Uniqueness.} $Pu$ is unique if and only if
\begin{equation}
\Har\cap \ker J=\{0\}.
\label{Unique}
\end{equation}
\end{enumerate}
\label{CharacUnique}
\end{theo}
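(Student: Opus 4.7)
The plan is to treat (1) as a standard Euler--Lagrange first-order condition and (2) as a direct consequence of the orthogonality obtained in (1). For part (1), suppose $Pu \in C_u$ attains the infimum. Since $C_u = u + \ker J$ is an affine subspace, for every $v \in \ker J$ and every scalar $t$ the element $Pu + tv$ still lies in $C_u$, so $\calE[Pu + tv] \geq \calE[Pu]$. Expanding by sesquilinearity and extracting the first-order term in $t$ (applying the perturbation in both $v$ and $iv$ to recover real and imaginary parts) forces $\calE(Pu,v) = 0$ for every $v \in \ker J$, which is precisely the defining relation of $\Har$. Combined with the inclusion $Pu \in C_u \subseteq \dom J$, this shows $Pu \in \Har \cap C_u$, so the infimum in question is attained on this smaller set.

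For part (2), suppose first that two minimizers $Pu,P'u$ exist. Their difference $w := Pu - P'u$ lies in $\ker J$ (both share the common $J$-image $Ju$) and in $\Har$ (by linearity of the orthogonality relation from (1) in its first argument). Hence non-uniqueness forces $\Har \cap \ker J \neq \{0\}$. Conversely, given $0 \neq w \in \Har \cap \ker J$, the element $Pu + w$ belongs to $C_u$ and
\begin{equation*}
\calE[Pu + w] = \calE[Pu] + 2\,\mathrm{Re}\,\calE(Pu,w) + \calE[w].
\end{equation*}
The middle term vanishes because $Pu \in \Har$ and $w \in \ker J$, while $\calE[w] = 0$ follows by inserting $v := w$ into the definition of $\Har$ applied to $w$ itself. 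Therefore $Pu + w$ is a second, distinct minimizer and uniqueness fails.

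The main conceptual obstacle is that positivity of $\calE$ does not entail definiteness, so one cannot argue uniqueness by a parallelogram-type comparison and then read off $w = 0$ from $\calE[w] \geq 0$. The trick is to locate $w$ inside the intersection $\Har \cap \ker J$, on which $\calE$ vanishes identically by the very definition of harmonicity; this turns $\Har \cap \ker J = \{0\}$ into the exact obstruction to multiple minimizers. A minor point to verify is that $Pu \in \dom J$ (needed to speak of $\Har$), which is free since $C_u \subseteq \dom J$ by construction.
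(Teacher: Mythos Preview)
Your proof is correct and follows essentially the same approach as the paper: part (1) is the standard first-variation argument with perturbations $tv$ and $itv$, and part (2) hinges on the observation that any $w\in\Har\cap\ker J$ satisfies $\calE[w]=0$, so that adding such a $w$ to a minimizer yields another minimizer. The only cosmetic difference is that for the direction ``uniqueness $\Rightarrow\Har\cap\ker J=\{0\}$'' the paper specializes to $u\in\ker J$ (where $Pu=0$) while you work with an arbitrary $u$ and compare $Pu$ with $Pu+w$; the underlying mechanism is identical.
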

\begin{proof}
Let $u\in\dom J$. Assume that the infimum is attained at some $Pu$. From the definition $Pu\in C_u$.\\
Now let $v\in\ker J$. Then for any  $t>0$, $v_t:=tv+Pu\in C_u$. Hence $\calE[v_t]\geq \calE[Pu]$. An elementary computation leads to
\begin{align*}
\calE[v_t]=t^2\calE[v] + 2t{\rm Re}\,\calE(v,Pu) + \calE[Pu]\geq \calE[Pu].
\end{align*}
Dividing by $t$ and letting $t\downarrow 0$ yields ${\rm Re}\,\calE(v, Pu)\geq 0$. Changing $v$ by $-v$ leads to ${\rm Re}\,\calE(v, Pu)\geq 0$.\\
Similarly, changing $v_t$ by $w_t:=itv+Pu$ we obtain ${\rm Im}\,\calE(v, Pu)= 0$. Thus $\calE(v, Pu)\geq 0$ and  $Pu\in\Har$.\\
As $Pu\in\Har\cap C_u\subset C_u$ we achieve
\[
\calE[Pu]\leq    \inf\{\calE[v],\ v\in\Har\cap C_u\}\leq \calE[Pu],
\]
which ends the proof of the first assertion.\\
Uniqueness:  Assume that $Pu$ is unique for every $u\in\dom J$. In particular for $u\in\ker J$ we get $\ccalE[Ju]=0$ and by uniqueness $Pu=0$. Now let $v\in\Har\cap\ker J$. Then $\calE[v]=0$. By uniqueness, once again, we obtain $v=Pu=0$. Thereby  $\Har\cap\ker J=\{0\}$.\\
Conversely assume that $\Har\cap \ker J=\{0\}$ and that for some $u\in\dom J$ the infimum is attained at $Pu,P'u$. Then $Pu-P'u\in\Har\cap \ker J=\{0\}$ and then $Pu=P'u$.
\end{proof}
\begin{rk}
{\rm
\begin{enumerate}

\item The $Pu$ might be non-unique. It is for instance the case if $0$ is an eigenvalue of $\calE$.
\item Under assumption of Theorem \ref{CharacUnique} the map
\[
\dom J\to \Har,\ u\mapsto P u,
\]
is linear.
\end{enumerate}
}
\end{rk}
From now on we maintain the assumption (\ref{Unique}).\\
The converse of the latter theorem solves the problem $\inf=\min$.
\begin{theo}
Assume that
\begin{equation}
\Har\cap C_u\neq\emptyset,\ \forall\,u\in\dom J.
\label{Min}
\end{equation}
Let $Pu$ be any element from  $\Har\cap C_u$. Then
\begin{equation}
\inf\{\calE[v],\ v\in C_u\}=\calE[Pu].
\end{equation}
\label{InfMin}
\end{theo}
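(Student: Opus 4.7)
The plan is to exploit the affine structure of $C_u$ together with the orthogonality property that characterises $\Har$. Given any competitor $v\in C_u$ and the candidate minimiser $Pu\in\Har\cap C_u$, both vectors lie in the affine space $u+\ker J$, so their difference $w:=v-Pu$ belongs to $\ker J$. This is the one algebraic manoeuvre that makes the whole argument go through.

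With this decomposition in hand, I would expand the quadratic form by polarisation:
\begin{equation*}
\calE[v]=\calE[Pu+w]=\calE[Pu]+2\,\mathrm{Re}\,\calE(Pu,w)+\calE[w].
\end{equation*}
Since $Pu\in\Har$ and $w\in\ker J$, the defining property of $\Har$ forces $\calE(Pu,w)=0$, so the cross term drops out. Positivity of $\calE$ (we are in the subsection treating the positive case) then yields $\calE[w]\ge 0$, whence $\calE[v]\ge \calE[Pu]$ for every $v\in C_u$. Taking the infimum over $v\in C_u$ shows $\inf\{\calE[v]\colon v\in C_u\}\ge \calE[Pu]$, while the reverse inequality is trivial because $Pu\in C_u$ is itself admissible. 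Hence the infimum equals $\calE[Pu]$ and is actually a minimum.

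I do not foresee any real obstacle: the only subtle point is to remember that $\calE$ has been extended to a complex sesquilinear form, so the cross term is $2\,\mathrm{Re}\,\calE(Pu,w)$ rather than $2\calE(Pu,w)$; but this is irrelevant once we observe that the full complex scalar $\calE(Pu,w)$ vanishes by the harmonicity condition. It is also worth recording that, combined with the standing hypothesis (\ref{Unique}) from Theorem \ref{CharacUnique}, the element $Pu$ is in fact unique in $\Har\cap C_u$, so the statement "any element $Pu$" is consistent. The identity $\ccalE[Ju]=\calE[Pu]$ of Theorem \ref{CharacUnique}(1) is then recovered as an immediate corollary.
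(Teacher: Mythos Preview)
Your proof is correct. The core mechanism is the same as the paper's --- write a competitor as the harmonic candidate plus an element of $\ker J$, kill the cross term by the defining orthogonality of $\Har$, and conclude by positivity of $\calE$ --- but your organisation is considerably more direct.

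The paper takes a detour: for every $v\in\dom J$ it picks an element $Pv\in\Har\cap C_v$, first proves $\calE[v]\ge\calE[Pv]$ by your expansion trick applied to $v$ and $Pv$, and then must still show $\calE[Pv]=\calE[Pu]$ whenever $v\in C_u$. This last step requires observing that $Pu-Pv\in\Har\cap\ker J$, whence $\calE[Pu-Pv]=0$, and then expanding again. You bypass all of this by comparing $v$ directly with the single fixed $Pu$, using only that $v-Pu\in\ker J$. Your route therefore uses the hypothesis $\Har\cap C_u\neq\emptyset$ only for the given $u$, not for every $v\in\dom J$, and is the natural argument.
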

\begin{proof}
For each $v\in\dom J$, we set $Pv$ any element from $\Har\cap C_v$. From the very definition we get
\[
\inf\{\calE[w],\ w\in C_u \}\leq \calE[Pu],\ \forall\,u\in\dom J.
\]
On the other hand we have $v-Pv\in\ker J$. Since in particular $Pv\in\Har$ we get $\calE(Pv,Pv-v)=0$ and then
\[
\calE[Pv]=\calE(Pv,v),\ \forall\,v\in\dom J.
\]
On the other hand the positivity of $\calE$ together with the latter identity lead to
\begin{align}
0\leq \calE[Pv-v]=\calE[Pv]-2\calE(Pv,v)+\calE[v]=-\calE[Pv]+\calE[v].
\end{align}
Hence we achieve
\begin{align}
\calE[v]\geq \calE[Pv],\ \forall\,v\in\dom J.
\end{align}

Let now $u\in\dom J$ and $v\in C_u$. Then, as $Pv\in C_v$ we get $JPv=Jv=Ju$. Thereby $Pv\in C_u$ and $Pu-Pv\in\ker J\cap\Har$. Hence $\calE[Pu-Pv]=0$. Set $w=Pu-Pv$. Since $w\in\ker J$ and $Pv\in\Har$ we obtain $\calE(w,Pv)=0$.  A straightforward computation leads to
\begin{align}
\calE[Pu]=\calE[w+Pv]=\calE[w] + 2\calE(w,Pv) + \calE[Pv]=\calE[Pv].
\end{align}
Finally putting all together we achieve
\[
\inf\{\calE[v],\ v\in C_u\}\geq \inf\{\calE[Pv],\ v\in C_u\}=\calE[Pu],
\]
and hence
\[
\inf\{\calE[v],\ v\in C_u\}=\calE[Pu].
\]
which completes the proof.

\end{proof}
\begin{rk}
{\rm
One may think that condition (\ref{Min}) is equivalent to $\Har\cap\ker J\neq\emptyset$. However, it is not true. Indeed, let $E_0$ be the smallest eigenvalue of the Dirichlet Laplacian on the unit ball.  Let
\[
\dom\calE=H^1(B),\ \calE[v]=\int_B|\nabla v|^2\,dx-E_0\int_B v^2\,dx.
\]
Then $\Har\cap\ker J$ is the linear span of the eigenfunction of $E_0$, say $u_0$. Let $u\in H^1(B)\setminus H^1_0(B)$. Then any $v\in\Har$ should be proportional to $u_0$. On the other hand any $v\in C_u$ can not be in $H^1_0(B)$, which is a contradiction. Hence  $\Har\cap\ C_u =\emptyset$ whereas $\Har\cap\ker J\neq\emptyset$.
}
\label{quadratic}
\end{rk}

Combining both theorems we obtain:
\begin{coro}
The infimum is attained and is unique for every $u\in\dom J$ if and only if $\Har\cap C_u$ is a singleton.
\end{coro}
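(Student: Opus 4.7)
The plan is to combine Theorems \ref{CharacUnique} and \ref{InfMin} directly, using that $\Har$ is a linear subspace and that each $C_u$ is an affine translate of $\ker J$, so differences of elements in $\Har\cap C_u$ land in $\Har\cap\ker J$. I would split the equivalence into the two implications and leverage the fact that $0\in\dom J$ gives $C_0=\ker J$.

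For the implication $(\Leftarrow)$, assume $\Har\cap C_u$ is a singleton for every $u\in\dom J$. In particular it is nonempty, so hypothesis (\ref{Min}) of Theorem \ref{InfMin} holds, and the infimum is attained at the unique element $Pu$ of $\Har\cap C_u$. To obtain uniqueness via Theorem \ref{CharacUnique}(2), I would check that $\Har\cap\ker J=\{0\}$. Taking $u=0$, one has $C_0=\ker J$; since $0\in\Har\cap\ker J$, the assumption that $\Har\cap C_0$ is a singleton forces this singleton to be $\{0\}$, so condition (\ref{Unique}) holds and the minimizer is unique.

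For the implication $(\Rightarrow)$, assume the infimum $\inf\{\calE[v]\colon v\in C_u\}$ is attained at a unique minimizer $Pu$ for every $u\in\dom J$. Theorem \ref{CharacUnique}(1) gives $Pu\in\Har\cap C_u$, hence this set is nonempty. Uniqueness of $Pu$ combined with Theorem \ref{CharacUnique}(2) yields $\Har\cap\ker J=\{0\}$. Now let $w_1,w_2\in\Har\cap C_u$; since $\Har$ is a linear subspace and $Jw_1=Jw_2=Ju$, the difference $w_1-w_2$ lies in $\Har\cap\ker J=\{0\}$, giving $w_1=w_2$. Thus $\Har\cap C_u$ is a singleton.

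The proof involves no real obstacle; the only mildly subtle step is establishing the ``unique'' half of the $(\Leftarrow)$ direction, since the hypothesis is phrased uniformly in $u$ and one must specialize to $u=0$ to extract condition (\ref{Unique}) before invoking Theorem \ref{CharacUnique}(2). Once that is observed, everything follows by the two previous theorems.
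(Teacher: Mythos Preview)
Your proof is correct and is exactly the kind of argument the paper has in mind: it simply says ``Combining both theorems we obtain'' and gives no further details, so your write-up just fills in what is implicit. One minor streamlining in the $(\Leftarrow)$ direction: you do not actually need to pass through condition~(\ref{Unique}) via $u=0$ and Theorem~\ref{CharacUnique}(2); once Theorem~\ref{InfMin} gives a minimizer in $\Har\cap C_u$, Theorem~\ref{CharacUnique}(1) forces \emph{every} minimizer to lie in $\Har\cap C_u$, and since that set is a singleton, uniqueness is immediate.
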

The latter theorems together with the corollary have very far-reaching consequences.
\begin{theo}
\begin{enumerate}
\item The infimum is attained and is unique for every $u\in\dom J$, if and only if
\begin{equation}
\dom J=\Har\oplus\ker J.
\label{directSum}
\end{equation}
\item Assume that the decomposition (\ref{directSum}) holds. Let $u\in\dom J$ and  $P u\in\Har$  be the component of $u$ corresponding to the direct sum. Then
\begin{equation}
\inf\{{\calE}[v]:\; v\in C_u\}=\calE[P u].
\end{equation}
\end{enumerate}
\label{FinalInf}
\end{theo}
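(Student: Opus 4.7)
The plan is to leverage the corollary just established, which asserts that the infimum in (\ref{construction1}) is attained at a unique element for every $u\in\dom J$ if and only if $\Har\cap C_u$ is a singleton for every $u\in\dom J$. The theorem then reduces to the purely algebraic equivalence between this singleton property and the direct sum decomposition (\ref{directSum}).

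For the implication ($\Rightarrow$), I would first specialize to $u\in\ker J$: then $C_u=\ker J$ contains $0\in\Har$, so the singleton hypothesis forces $\Har\cap\ker J=\{0\}$, recovering the standing assumption (\ref{Unique}). For arbitrary $u\in\dom J$, denote by $Pu$ the unique element of $\Har\cap C_u$. Since $u-Pu\in\ker J$, the identity
\[
u = Pu + (u - Pu)
\]
exhibits $u$ as the sum of an element of $\Har$ and one of $\ker J$, so $\dom J=\Har+\ker J$. Together with the trivial intersection, this yields the direct sum (\ref{directSum}).

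For the reverse implication ($\Leftarrow$), assume $\dom J=\Har\oplus\ker J$. Given $u\in\dom J$, write uniquely $u=Pu+k$ with $Pu\in\Har$ and $k\in\ker J$. Then $Pu=u-k\in u+\ker J=C_u$, so $Pu\in\Har\cap C_u$. Any other point $P'u$ in this intersection would satisfy $Pu-P'u\in\Har\cap\ker J=\{0\}$, forcing $P'u=Pu$. Hence $\Har\cap C_u=\{Pu\}$ is a singleton and, by the corollary, the infimum is attained at a unique element, which is precisely the $\Har$-component of $u$ in the direct sum decomposition.

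The second assertion is then immediate: by Theorem \ref{InfMin} the infimum over $C_u$ equals $\calE[Pu]$ for any element of $\Har\cap C_u$, and under the direct sum hypothesis this element is precisely the $\Har$-component of $u$, giving $\inf\{\calE[v]\colon v\in C_u\}=\calE[Pu]$. I do not anticipate a serious obstacle here, since the two technical halves, namely existence of the minimizer (Theorem \ref{InfMin}) and its uniqueness (Theorem \ref{CharacUnique}), have already been carried out; what remains is essentially a bookkeeping argument converting the intersection condition $\Har\cap C_u=\{Pu\}$ into the equivalent linear-algebraic statement of a direct sum.
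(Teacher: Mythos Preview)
Your proposal is correct and follows essentially the same route as the paper: both directions hinge on writing $u=Pu+(u-Pu)$ with $Pu\in\Har$ and $u-Pu\in\ker J$, and on identifying the minimizer with the $\Har$-component. The only cosmetic difference is that you invoke the preceding corollary and Theorem~\ref{InfMin} explicitly, whereas the paper, for the converse, merely says to ``mimic the proof of Theorem~\ref{InfMin}''; your version is slightly cleaner in this respect.
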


\begin{proof}
Assume that for every $u\in\dom J$ the infimum is attained at some $Pu$ and is unique. Then according to Theorem \ref{FinalInf}, $Pu\in\Har\cap C_u$. Moreover $u$  can be written, in a unique manner, as $u=Pu + u-Pu$ with $Pu\in\Har$ and $u-Pu\in\ker J$.\\
The converse: If (\ref{directSum}) is fulfilled,  mimicking the proof of Theorem \ref{InfMin}, one shows that
\[
\inf\{\calE[v],\ v\in C_u\}=\calE[P u],
\]
and the infimum is attained at the sole element $P u$.
\end{proof}
The following gives sufficient conditions for the decomposition $\dom J=\ker J\oplus \Har$ to hold.
\begin{prop}
Assume that $J$ is injective. Then the Dirichlet principle holds. Moreover,
\[
\ccalE[Ju]=\calE[u],\ \forall\,u\in\dom J.
\]
\end{prop}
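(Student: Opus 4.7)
The plan is to reduce directly to Theorem \ref{FinalInf}. Injectivity of $J$ means $\ker J = \{0\}$, which immediately collapses the whole machinery.

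First I would observe that the defining condition for $\Har$, namely $\calE(u,v)=0$ for all $v\in\ker J$, becomes vacuous when $\ker J=\{0\}$, so $\Har=\dom J$. Consequently the decomposition $\dom J=\Har\oplus\ker J$ of Theorem \ref{FinalInf} holds trivially, with the direct sum being $\dom J\oplus\{0\}$. Likewise the uniqueness condition $\Har\cap\ker J=\{0\}$ from Theorem \ref{CharacUnique} is automatic.

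Next I would identify the minimizer. For any $u\in\dom J$, the affine manifold $C_u=u+\ker J$ collapses to the singleton $\{u\}$, so that $\Har\cap C_u=\{u\}$ is nonempty and the projection $Pu$ onto $\Har$ relative to the decomposition above is simply $u$ itself.

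Finally I would invoke the second conclusion of Theorem \ref{FinalInf}: the Dirichlet principle holds and
\[
\ccalE[Ju]=\inf\{\calE[v]\colon v\in C_u\}=\calE[Pu]=\calE[u],
\]
which is exactly the stated formula. No obstacle is expected here; the proposition is essentially a sanity check that the abstract framework reproduces the trivial case where the trace map loses no information.
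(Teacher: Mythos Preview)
Your argument is correct and mirrors the paper's proof: both observe that injectivity gives $\ker J=\{0\}$, hence $\Har=\dom J$, and that $C_u=\{u\}$ forces $\ccalE[Ju]=\calE[u]$. The only cosmetic difference is that you route the conclusion through Theorem~\ref{FinalInf}, whereas the paper reads it off directly from the definition of $\ccalE$.
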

\begin{proof}
By assumption we obtain $\ker J=\{0\}$ and $\dom J=\Har$. Besides if $u,v\in\dom J$ are such that $J v=J u$, then $u=v$ and hence $\ccalE[Ju]=\calE[u]$.
\end{proof}
Once we have solved the problem of existence and uniqueness of the minimizer, we are in a comfortable situation to discuss the closedness $\ccalE$.\\
We quote that under condition of Theorem \ref{FinalInf} we have
\[
\dom\ccalE=\ran J,\ \ccalE[Ju] = \calE[Pu],\ \forall\,u\in\dom J.
\]
Here is an improvement of \cite[Lemma 3.4]{BBST}.\\
We set $\calE^J$ the form defined by
\[
\dom\calE^J=\dom J,\ \calE^J[u] :=\calE[u] + \|Ju\|^2_\aux.
\]
Observe that if the direct sum decomposition (\ref{directSum}) is fulfilled then $\ccalE^J$ defines a scalar product on $\Har$.
\begin{theo}
Suppose that condition (\ref{directSum}) is fulfilled. Then $\ccalE$ is closed if and only if $(\Har,\calE^J)$ is a Hilbert space.
\label{closed-Positive}
\end{theo}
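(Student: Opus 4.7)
The plan is to use Theorem \ref{FinalInf} to transport the form $\ccalE$ back to $\Har$ via the projection map $P$, and then show that closedness of $\ccalE$ on $\calHaux$ corresponds precisely to completeness of $(\Har,\calE^J)$. Under the hypothesis (\ref{directSum}), the map $J\colon\Har\to\ran J$ is a linear bijection, and by Theorem \ref{FinalInf},
\[
\ccalE[Ju]=\calE[Pu]=\calE[u]\quad\text{for every }u\in\Har.
\]
Adding $\|Ju\|_\aux^2$ to both sides gives $\ccalE^J[Ju]=\calE^J[u]$, so $J|_{\Har}$ is an isometric isomorphism from $(\Har,\calE^J)$ onto $(\ran J,\ccalE^J)$. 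A preliminary check shows that $(\calE^J)^{1/2}$ is indeed a norm on $\Har$: if $\calE^J[u]=0$ for some $u\in\Har$, then $\|Ju\|_\aux=0$, hence $u\in\Har\cap\ker J=\{0\}$ by the uniqueness assumption (\ref{Unique}) forced on us since (\ref{directSum}) includes it.

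Next I would recall that a non-negative form $\ccalE$ on $\calHaux$ is closed if and only if its domain, equipped with the norm $(\ccalE+\|\cdot\|_\aux^2)^{1/2}=(\ccalE^J)^{1/2}$, is a Hilbert space. The required continuous embedding into $\calHaux$ is automatic from the elementary bound $\|\psi\|_\aux\leq(\ccalE^J[\psi])^{1/2}$, so closedness of $\ccalE$ is equivalent to completeness of $(\ran J,\ccalE^J)$.

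Combining the two reductions, the isometric bijection $J|_{\Har}$ transfers completeness back and forth: $(\Har,\calE^J)$ is complete if and only if $(\ran J,\ccalE^J)$ is complete, which in turn is equivalent to $\ccalE$ being closed. In one direction, given a $\calE^J$-Cauchy sequence $(u_n)\subset\Har$, the images $(Ju_n)$ form a $\ccalE^J$-Cauchy sequence in $\dom\ccalE$, which by closedness has a limit $\psi\in\dom\ccalE=\ran J$; writing $\psi=Ju$ with $u\in\Har$, the isometry forces $u_n\to u$ in $\calE^J$. The reverse direction is symmetric. I do not foresee a real obstacle here; the theorem is essentially a bookkeeping consequence of Theorem \ref{FinalInf} once the identification $\ccalE^J\circ J=\calE^J$ on $\Har$ is noted, and the only subtle point is making sure $\calE^J$ is a genuine inner product on $\Har$, which is exactly what (\ref{Unique}) buys us.
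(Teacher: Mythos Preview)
Your proof is correct and follows essentially the same route as the paper. Both arguments rest on the identity $\ccalE[Ju]=\calE[u]$ for $u\in\Har$ (i.e.\ $Pu=u$), which makes $J|_{\Har}$ an isometry from $(\Har,\calE^J)$ onto $(\ran J,\ccalE_{-1})$; the paper then chases Cauchy sequences explicitly in each direction, while you invoke the standard fact that closedness of a non-negative form is equivalent to completeness of the form domain and transport completeness through the isometry---this is the same proof, just packaged more concisely.
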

\begin{proof}
Assume that $\ccalE$ is closed. Let $(u_n)$ be a $\calE^J$-Cauchy sequence in $\Har$. Owing to the fact that $Pu_n=u_n$ for every integer $n$, we get
\[
\ccalE[Ju_n-Ju_m]=\calE[u_n-u_m]\to 0,\ \text{and}\ \|Ju_n-Ju_m\|_\aux\to 0.
\]
Thereby $(Ju_n)$ is $\ccalE_{-1}$-Cauchy and by closedness of $\ccalE$ there is $u\in\dom J$ such that
\[
\lim_{n\to\infty}\ccalE[Ju_n -Ju] + \|Ju_n -Ju\|_\aux=0.
\]
Recalling that $\ccalE[Ju_n-Ju]=\calE[u_n-Pu]$ and $Ju=JPu$ and that $Pu\in\Har$ we conclude that $(u_n)$ is $\calE^J$-convergent to $Pu$ and hence converges  in $(\Har,\calE^J)$.\\
Conversely, assume that $(\Har,\calE^J)$ is a Hilbert space. let $(Ju_n)$ and $v\in\calHaux$ be such that $Ju_n\to v$ and $(Ju_n)$ is $\ccalE_{-1}$-Cauchy. Then $(Pu_n)$ is a Cauchy sequence in $(\Har,\calE^J)$. Thereby there is $u\in\Har$ such that $\calE^J[Pu_n - u]= \calE^J[Pu_n - Pu]\to 0$. Thus $v=Ju$ and $\ccalE[Ju_n-Ju]=\calE[Pu_n-Pu]\to 0$, showing hat $\ccalE$ is closed.
\end{proof}
Here is an example which illustrates the case where $J$ is injective.
\begin{exa}
{\rm
Let $\calH=L^2(\R):=L^2(\R,dx)$ and let $\calE$ be defined by
\[
\calD=\{u\in L^2_{\rm loc}(\R),\ u'\in L^2(\R)\},\ \calE[u] = \int_\R (u')^2\,dx.
\]
Let $\mu$ be a positive Radon measure on $\R$ such that $\mu(I)>0$ for every nonempty open interval of $\R$. We recall that every element from the space $\calD$ has a continuous representative with respect to Lebesgue measure. Thus without loss of generality we shall assume that elements from $\calD$ has been chosen to be continuous.\\
We choose $\calHaux= L^2(\R,\mu)$ and  consider
\[
J:\dom J=\calD\cap L^2(\R,\mu)\to L^2(\R,\mu),\ u\mapsto u.
\]
Thus $J$ is well defined, moreover it is injective. Indeed, if $u=0\,m-a.e.$. Then by continuity of $u$ the set $I=\{|u|>0\}$ is open and has zero $m$-measure. By assumptions on the measure $m$ we conclude that $I$ is empty and hence $u$ vanishes identically.\\
According to our previous results the trace form $\ccalE$ is given by
\[
\dom\ccalE=\calD,\  \ccalE[Ju]=\ccalE[u]=\calE[u],\ \forall\,u\in\calD.
\]
Consequently, $\ccalE$ is nothing else but the restriction of $\calE$ to $\dom J$.\\
We claim that $\ccalE$ is closed. To that end we make use of Theorem \ref{closed-Positive}.\\
Let $(u_n)\subset\dom J$ be such that $\calE[u_n - u_m] + \int_\R(u_n - u_m)^2\,d\mu\to 0$. Then there is $u\in L^2(\R,\mu)$ and $v\in L^2(\R)$ such that
\[
\int_\R(u_n - u)^2\,d\mu\to 0\ \text{and}\ \int_\R(u_n' - v)^2\,dx\to 0.
\]
From the elementary identity
\[
u_n(x) - u_n(y) =\int_y^x u_n'\,dt,
\]
we derive
\[
(u_n(x) - u_n(y))^2 \leq |x-y|\calE[u_n].
\]
Let $[a,b]\subset\R$ be compact. Then the latter inequality leads to
\[
u_n(x)^2\leq  2|x-y|\calE[u_n] + 2u_n^2(y),\ \forall\,x,y\in[a,b].
\]
Integrating with respect to $\mu$ we obtain

\[
\sup_{x\in[a,b]}u_n(x)^2\leq  2\frac{\max(b-a,1)}{\mu([b-a])}\big(\calE[u_n] + \int_\R u_n^2\,d\mu\big).
\]
Thus the sequence $(u_n)$ converges locally uniformly to a continuous function on $\R$.  Thereby we can assume that $u$ is continuous and $u_n\to u$ locally uniformly on $\R$. In particular, $(u_n')$ converges to $u'$ in the sense of distributions. Thus $v=u'$. Finally we obtain $u\in\calD\cap L^2(\R,\mu)$ and $\calE[u_n-u] + \int_\R (u_n - u)^2\,d\mu\to 0$. Hence $\ccalE$ is closed.\\
Let us quote that the selafdjoint operator related to $\ccalE$ is commonly noted by $-\frac{d}{d\mu}\frac{d}{dx}$. In particular if $\mu$ is Lebesgue measure then the operator is simply the Laplacian on $\R$.

}
\end{exa}

\subsection{The general case}
Here we no longer assume positivity of $\calE$, but lower semi-boundedness: there is a real $c\geq 0$ such that
\[
\calE[u]\geq -c\|u\|^2,\ \forall\,u\in\dom\calE.
\]
One is tempted to define $\ccalE$ via formula (\ref{construction1}). However, one is faced to a new problem with lower semi-boundedness of $\ccalE$. If $\ccalE$ were defined by (\ref{construction1}) and if it were lower semibounded then there should be a real constant $c'\geq 0$ such that
\[
\ccalE[Ju]\geq -c'\|Ju\|\aux^2,\ \forall\,u\in\dom J.
\]
This leads, in particular to
\[
\calE[v]\geq 0,\ \forall\,v\in\ker J.
\]
Unfortunately this is a strong restriction.\\
Being inspired by Theorem \ref{CharacUnique}, we define
\begin{equation}
\ccalE[Ju]:= \inf\{\calE[v],\ v\in\Har\cap C_u\},\ \forall\,u\in\dom J.
\end{equation}
We stress that, thanks to Theorem 2.1, this definition coincides with the former one for positive forms.\\
Let us first solve the problem of lower semi-boundedness of $\ccalE$.
\begin{lem}
The functional  $\ccalE$ is lower semibounded if and only if there is a real $c'\geq 0$ such that
\begin{align}
\calE[v]\geq -c'\|Jv\|_\aux^2,\ \forall\,v\in\Har.
\label{lsb}
\end{align}
\end{lem}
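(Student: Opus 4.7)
The lemma is essentially an unpacking of the definition of $\ccalE$, and the key observation driving both implications is that on the set over which the infimum is taken, namely $\Har\cap C_u$, the value $\|Jv\|_\aux$ is constant and equal to $\|Ju\|_\aux$. Indeed, $v\in C_u$ means $v-u\in\ker J$, so $Jv=Ju$. This reduces both directions to one-line arguments.

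For the ``if'' direction, I assume (\ref{lsb}) and pick any $u\in\dom J$ with $\Har\cap C_u\neq\emptyset$ (otherwise $\ccalE[Ju]=+\infty$ and the bound is trivial). For every $v\in\Har\cap C_u$, the observation above gives $Jv=Ju$, hence $\|Jv\|_\aux^2=\|Ju\|_\aux^2$, so (\ref{lsb}) applied to such $v$ yields
\[
\calE[v]\geq -c'\|Jv\|_\aux^2=-c'\|Ju\|_\aux^2.
\]
Taking the infimum over $v\in\Har\cap C_u$ produces $\ccalE[Ju]\geq -c'\|Ju\|_\aux^2$, which is exactly lower semi\-bound\-ed\-ness of $\ccalE$ with constant $c'$.

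For the ``only if'' direction, I suppose $\ccalE$ is lower semi\-bound\-ed, so there exists $c''\geq 0$ with $\ccalE[Ju]\geq -c''\|Ju\|_\aux^2$ for all $u\in\dom J$. Pick an arbitrary $v\in\Har$. Since $0\in\ker J$, we have $v\in v+\ker J=C_v$, so $v\in\Har\cap C_v$ is itself a competitor in the infimum defining $\ccalE[Jv]$. Therefore
\[
\calE[v]\geq \inf\{\calE[w]\colon w\in\Har\cap C_v\}=\ccalE[Jv]\geq -c''\|Jv\|_\aux^2,
\]
which is (\ref{lsb}) with $c'=c''$.

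There is no genuine obstacle in this proof: its content lies entirely in recognizing that the definition of $\ccalE$ has been tailored (via the intersection with $\Har$) precisely so that the lower semi\-bound\-ed\-ness question pulls back to a bound on $\Har$ alone, which is the content of (\ref{lsb}). The only mildly delicate point is keeping track of the possibility that $\Har\cap C_u$ is empty for some $u$, which simply renders the inequality vacuous and does not affect the equivalence.
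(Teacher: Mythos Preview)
Your proof is correct and follows essentially the same approach as the paper's own proof: both directions hinge on the observation that $Jv=Ju$ for $v\in C_u$, together with the fact that any $v\in\Har$ is itself a competitor in the infimum defining $\ccalE[Jv]$. The paper dismisses the ``only if'' direction as obvious and writes the ``if'' direction exactly as you do; your version is simply more explicit (including the harmless empty-intersection case).
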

\begin{proof}
Obviously if $\ccalE$ is lower semibounded then inequality (\ref{lsb}) holds true.\\
Conversely, if (\ref{lsb}) holds true, then for any $u\in\dom J,\ v\in\Har\cap C_u$ we obtain
\[
\calE[v]\geq -c'\|Jv\|_\aux^2=-c'\|Ju\|_\aux^2.
\]
Taking the infimum over $\Har\cap C_u$ we deduce that $\ccalE$ is lower semibounded.
\end{proof}
\begin{lem}
Assume that the lower bound (\ref{lsb}) holds. Then $\ccalE$ is a quadratic form.
\end{lem}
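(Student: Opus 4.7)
The plan is to establish homogeneity and the parallelogram identity for $\ccalE$ on $\ran J$, after which the Jordan--von Neumann theorem polarizes to produce a Hermitian sesquilinear form. Throughout I would exploit the fact that assumption (\ref{lsb}), combined with the preceding lemma, makes each $\ccalE[Ju]$ a finite real number, so that genuine minimizing sequences exist.

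Homogeneity is soft: for $\alpha\neq 0$, multiplication by $\alpha$ is a bijection $\Har\cap C_u\to\Har\cap C_{\alpha u}$ (both $\Har$ and $\ker J$ being linear) that scales $\calE$ by $|\alpha|^2$, so $\ccalE[\alpha Ju]=|\alpha|^2\ccalE[Ju]$; for $\alpha=0$, assumption (\ref{Unique}) forces $\Har\cap\ker J=\{0\}$, whence $\ccalE[0]=\calE[0]=0$.

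For the parallelogram law, fix $u_1,u_2\in\dom J$ and choose minimizing sequences $v_n^i\in\Har\cap C_{u_i}$ with $\calE[v_n^i]\to\ccalE[Ju_i]$. Linearity of $\Har$ and of $J$ ensure $\tfrac12(v_n^1\pm v_n^2)\in\Har\cap C_{\frac12(u_1\pm u_2)}$. Applying the parallelogram identity for the sesquilinear form $\calE$,
\[
\calE[\tfrac12(v_n^1+v_n^2)]+\calE[\tfrac12(v_n^1-v_n^2)]=\tfrac12\calE[v_n^1]+\tfrac12\calE[v_n^2],
\]
bounding the left-hand side from below by $\ccalE[\tfrac12 J(u_1+u_2)]+\ccalE[\tfrac12 J(u_1-u_2)]$, letting $n\to\infty$, and then invoking homogeneity, produces
\[
\ccalE[J(u_1+u_2)]+\ccalE[J(u_1-u_2)]\leq 2\ccalE[Ju_1]+2\ccalE[Ju_2].
\]
The reverse inequality is obtained symmetrically, starting from minimizing sequences $w_n^{\pm}\in\Har\cap C_{u_1\pm u_2}$ for $\ccalE[J(u_1\pm u_2)]$ and observing that $\tfrac12(w_n^+\pm w_n^-)\in\Har\cap C_{u_i}$. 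Together with homogeneity, the resulting parallelogram identity for $\ccalE$ yields a Hermitian sesquilinear form on $\ran J$ via polarization.

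The principal obstacle is running the two-sided parallelogram bookkeeping so that both inequalities collapse to equality; this relies entirely on the linearity of $\Har$ and on the identity $J(v_1\pm v_2)=Ju_1\pm Ju_2$. A subsidiary point is that one must restrict attention to those $u\in\dom J$ for which $\Har\cap C_u$ is nonempty (otherwise the infimum would be $+\infty$); under the direct-sum decomposition (\ref{directSum}) standing since the positive case this is automatic, and in fact $\Har\cap C_u=\{Pu\}$ is a singleton, so one could even bypass minimizing sequences and write $\ccalE[Ju]=\calE[Pu]$ with $P\colon\dom J\to\Har$ linear, making the quadratic nature of $\ccalE$ inherited directly from that of $\calE$.
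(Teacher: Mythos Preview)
Your argument is correct but takes a genuinely different route from the paper. The paper proceeds by a shift: choosing $\alpha\ge c'$ and setting $Q:=\calE+\alpha\|J\cdot\|_\aux^2$, it notes that $\Har(Q)=\Har$ (since $Q(u,v)=\calE(u,v)$ whenever $v\in\ker J$) and that $Q[v]=\calE[v]+\alpha\|Ju\|_\aux^2$ for every $v\in\Har\cap C_u$, whence $\check Q[Ju]=\ccalE[Ju]+\alpha\|Ju\|_\aux^2$. Lemma~\ref{quadratic-1} (already proved for positive forms) then says $\check Q$ is quadratic, and subtracting the quadratic term $\alpha\|\cdot\|_\aux^2$ finishes. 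You instead verify homogeneity and the parallelogram identity directly on $\ccalE$ via minimizing sequences in $\Har\cap C_u$ and invoke Jordan--von~Neumann. The paper's route is shorter because all the work was front-loaded into Lemma~\ref{quadratic-1}; yours is self-contained and makes transparent exactly which structural features (linearity of $\Har$ and of $J$) drive the conclusion.

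One small correction to your closing remark: the direct-sum decomposition (\ref{directSum}) is \emph{not} a standing assumption at this point of the paper --- only (\ref{Unique}) carries over from the positive subsection. So your shortcut $\ccalE[Ju]=\calE[Pu]$ with $P$ linear is not yet available, and your minimizing-sequence argument is doing genuine work. The nonemptiness of $\Har\cap C_u$ (condition~(\ref{Min})) is indeed tacitly needed for $\ccalE$ to be finite-valued; this is an implicit assumption in the paper's formulation as well. Incidentally, for $\alpha=0$ you do not need (\ref{Unique}): the bound (\ref{lsb}) alone gives $\calE[v]\ge 0$ for $v\in\Har\cap\ker J$, and since $0$ lies in that set the infimum is $0$.
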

\begin{proof}
Let $\alpha\geq c'$. Set $Q:=\calE +\alpha J$. Then $Q$ is positive and hence according to Lemma \ref{quadratic-1} the trace of $Q$ with respect to $J$, which we denote by $\check{Q}$, is a quadratic form. On the other hand from the definition we infer that
\[
\check{Q}[Ju]=\ccalE[Ju] + \alpha\|Ju\|_\aux^2,\ \forall\,u\in\dom J.
\]
Thus $\ccalE$ is a quadratic from as well.

\end{proof}

Concerning uniqueness of infimum, Theorem 2.1 still holds  in this general framework. Under condition (\ref{lsb}), Theorem 2.2 still holds true as well. For, one has to change $\calE$ by $\calE +c' J$. Moreover, Theorem 2.4. still holds in this general framework.
\begin{theo}
\begin{enumerate}
\item Assume that (\ref{directSum}) holds. Then  For every $u\in\dom J$ there is a unique $Pu\in\Har\cap C_u$ such that $\ccalE[Ju]=\calE[Pu]$.
\item Assume that (\ref{directSum}) and (\ref{lsb}) hold. Then the form $\ccalE$ is closed if and only if $(\Har,\calE^{(1+c')J})$ is a Hilbert space.
\end{enumerate}
\label{D-to-N-general}
\end{theo}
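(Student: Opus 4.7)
The plan is to deduce (1) directly from the direct-sum decomposition, and to prove (2) by exhibiting an isometric isomorphism between $(\dom\ccalE,\ccalE+(1+c')\|\cdot\|_\aux^2)$ and $(\Har,\calE^{(1+c')J})$, thereby reducing closedness to Hilbert completeness.

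For (1), given $u\in\dom J$, the hypothesis (\ref{directSum}) produces a unique $Pu\in\Har$ with $u-Pu\in\ker J$, so that $Pu\in\Har\cap C_u$. Any other $v\in\Har\cap C_u$ would satisfy $v-Pu\in\Har\cap\ker J$, which is trivial by uniqueness of the direct-sum decomposition of the zero vector; hence $v=Pu$. Thus $\Har\cap C_u=\{Pu\}$ is a singleton, the infimum defining $\ccalE[Ju]$ is attained trivially, and $\ccalE[Ju]=\calE[Pu]$.

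For (2), I would first check that $J|_{\Har}\from\Har\to\ran J=\dom\ccalE$ is a bijection: injectivity is exactly $\Har\cap\ker J=\{0\}$, and surjectivity comes from the identity $Ju=JPu$ valid for every $u\in\dom J$. Next, using (1), for every $v\in\Har$ one has $v=Pv$ and hence
\[
\ccalE[Jv]+(1+c')\|Jv\|_\aux^2=\calE[v]+(1+c')\|Jv\|_\aux^2=\calE^{(1+c')J}[v],
\]
so $J|_{\Har}$ pulls the form $\ccalE+(1+c')\|\cdot\|_\aux^2$ on $\dom\ccalE$ back to $\calE^{(1+c')J}$ on $\Har$. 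To confirm that both sides are genuine inner products, I would invoke (\ref{lsb}): for $v\in\Har$,
\[
\calE^{(1+c')J}[v]\geq -c'\|Jv\|_\aux^2+(1+c')\|Jv\|_\aux^2=\|Jv\|_\aux^2\geq 0,
\]
and vanishing here forces $Jv=0$, hence $v=0$ by injectivity of $J|_{\Har}$; the inner-product property on the image side is then transported by the isometry.

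With the isometry in place, closedness of $\ccalE$ (which, by the preceding lemma, has lower bound $-c'$) is equivalent to completeness of $(\dom\ccalE,\ccalE+(1+c')\|\cdot\|_\aux^2)$, and the isometry transfers this equivalence to completeness of $(\Har,\calE^{(1+c')J})$, yielding the stated characterization. The main obstacle is the accurate bookkeeping with the shift constant: the coefficient $(1+c')$ must be large enough to convert (\ref{lsb}) into a genuine positive-definiteness statement on $\Har$ while simultaneously being large enough to detect closedness on the $\calHaux$-side. This is precisely the same rigid-shift maneuver used earlier to replace $\calE$ by the positive form $\calE+\alpha J$ in the proof that $\ccalE$ is a quadratic form.
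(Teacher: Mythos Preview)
Your proof is correct and follows essentially the same route as the paper, which simply states that ``the proof runs exactly as the positive case'' (i.e., as in Theorems~\ref{FinalInf} and~\ref{closed-Positive}) and omits the details. Your organization around the explicit isometric isomorphism $J|_{\Har}\colon(\Har,\calE^{(1+c')J})\to(\dom\ccalE,\ccalE+(1+c')\|\cdot\|_\aux^2)$ is slightly cleaner than the Cauchy-sequence chase in the proof of Theorem~\ref{closed-Positive}, but the underlying idea is identical; indeed the paper itself later records this isometry as Lemma~\ref{Poisson}.
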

The proof runs exactly as the positive case, so we omit it.
\begin{rk}
{\rm
Following different methods,  under assumption (\ref{directSum}), it was constructed in \cite[Theorem 2.5]{Arendt1} and in \cite[Proposition 3.3]{BBST} a closed quadratic form in $\calHaux$. In fact all obtained forms coincide with $\ccalE$.
}
\end{rk}
\section{The singular D-to-N operator}
Let $\calE$ be a positive form and  $\lam\in\R$. Set
\[
\calE_\lam:=\calE-\lam.
\]
We introduce the quadratic form
\[
\calE_\Dir:\dom\calE_\Dir=\ker J,\ \calE_\Dir[u]=\calE[u].
\]
The subscript $\Dir$ stands for 'Dirichlet'.\\
Let us stress that the positivity assumption for $\calE$ is not crucial. For, if $\calE$ is lower semibounded one can shift it to get a positive form.\\
We assume that $\ker J$ is dense in $\calH$ and  $\calE_\Dir$ is closed. Let $L_\Dir$ be the positive selfadjoint operator related to $\calE_\Dir$. We suppose that $L_\Dir$ has compact resolvent and designate by $\sigma_e(L_\Dir)$ the set of eigenvalues of $L_\Dir$.\\
In order do construct $\cel$ via the Dirichlet principle, we should have, among other conditions
\[
\Har(\lambda)\oplus\ker J=\dom J,
\]
where $\Har(\lam)=\{u\in\dom J,\ \calE_\lam(u,v)=0,\ \forall\,v\in\ker J\}$.\\
The condition  $\Har(\lambda)\cap\ker J=\{0\}$ forces $\lambda$ not to be an eigenvalue of $L_D$. Hence from know on we assume
\begin{align}
\lambda\in\R\setminus \sigma_e(L_\Dir).
\label{NotEigen}
\end{align}
\begin{lem} The following two conditions are equivalent:
\begin{enumerate}
\item $\dom J=\Har\oplus\ker J$.
\item $0$ is not an eigenvalue of $L_\Dir$ and $\dom J=\Har(\lambda)\oplus\ker J$, for every  $\lambda\in(\R\setminus\sigma_e(L_\Dir))$.
\end{enumerate}
\label{EquivSum}
\end{lem}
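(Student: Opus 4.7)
The plan is to prove (2) $\Rightarrow$ (1) trivially and then establish the nontrivial implication (1) $\Rightarrow$ (2) by correcting the $\lam=0$ decomposition via the resolvent of $L_\Dir$. For (2) $\Rightarrow$ (1), since $0\notin\sigma_e(L_\Dir)$ by hypothesis, it suffices to apply the decomposition in (2) at the admissible value $\lam=0$ and use $\Har(0)=\Har$.

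For the converse, I first observe that $0\notin\sigma_e(L_\Dir)$: any $0$-eigenfunction $u_0\in\ker J$ of $L_\Dir$ satisfies $\calE(u_0,v)=0$ for all $v\in\ker J$, hence lies in $\Har\cap\ker J=\{0\}$ by (1). The intersection requirement for arbitrary $\lam\in\R\setminus\sigma_e(L_\Dir)$ is analogous: any $u\in\Har(\lam)\cap\ker J$ would be a $\lam$-eigenfunction of $L_\Dir$, and thus vanishes.

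The main step is existence of the decomposition. Given $u\in\dom J$, (1) yields a splitting $u=w_h+w_0$ with $w_h\in\Har$ and $w_0\in\ker J$. My aim is to find $\phi\in\ker J$ such that $w_h+\phi\in\Har(\lam)$, giving the decomposition $u=(w_h+\phi)+(w_0-\phi)$. Unwinding the definition of $\Har(\lam)$ and using $\calE(w_h,v)=0$ for $v\in\ker J$, the required condition on $\phi$ reads
\[
(\calE_\Dir-\lam)(\phi,v)=\lam(w_h,v),\quad\forall\,v\in\ker J,
\]
which is the form version of $(L_\Dir-\lam)\phi=\lam w_h$.

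The potentially delicate point lies here: one must view $w_h$ as an element of $\calH$ for the resolvent to act on (legitimate because $\dom J\subseteq\calD\subseteq\calH$) and invoke the density of $\ker J$ in $\calH$ so that the operator equation is equivalent to the form equation tested only on $\ker J$. Because $L_\Dir$ has compact resolvent, its spectrum is purely discrete and coincides with $\sigma_e(L_\Dir)$, so $\R\setminus\sigma_e(L_\Dir)\subseteq\rho(L_\Dir)$ and the unique solution $\phi:=\lam(L_\Dir-\lam)^{-1}w_h\in\dom L_\Dir\subseteq\ker J$ exists. A short direct computation then verifies $\calE_\lam(w_h+\phi,v)=0$ for all $v\in\ker J$, so that $w_h+\phi\in\Har(\lam)$, completing the proof.
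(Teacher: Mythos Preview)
Your proof is correct and follows essentially the same route as the paper's: both start from the $\lam=0$ decomposition $u=w_h+w_0$ (the paper writes $w_h=Pu$), correct the harmonic part by $\phi=\lam(L_\Dir-\lam)^{-1}w_h$, and verify that $w_h+\phi\in\Har(\lam)$; the uniqueness argument via ``$\Har(\lam)\cap\ker J$ consists of $\lam$-eigenfunctions'' is also identical. Your discussion of the ``delicate point'' is slightly more than is needed---once $\phi$ is defined through the resolvent, the implication from the operator equation $(L_\Dir-\lam)\phi=\lam w_h$ to the form identity tested on $\ker J$ is immediate from Kato's representation, so density of $\ker J$ in $\calH$ plays no further role beyond making $L_\Dir$ selfadjoint in the first place.
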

\begin{proof}
The implication $2\Rightarrow 1$ is obvious. Let us prove the reversed implication.\\
Suppose $\Har\oplus\ker J=\dom J$. Plainly we obtain $0$ is not an eigenvalue of $L_\Dir$.\\
Let $u\in\dom J$. We already know from Theorem \ref{CharacUnique} that if $\Har\cap\ker J=\{0\}$ then $\Har\cap C_u=\{Pu\}$. Let $\lambda\in\R\setminus\sigma_e(L_\Dir)$. Set
\[
w_\lambda:= \lambda(L_\Dir -\lambda)^{-1}Pu,\ u_\lam:= Pu + w_\lambda.
\]
Then $w_\lambda\in\ker J$. On the one hand $Ju_\lambda =JP u= Ju$, yielding thereby $u-u_\lambda\in\ker J$. On the other one, a straightforward computation shows that  $u_\lambda \in \Har(\lambda)$. Hence  $u=(u-u_\lambda) + u_\lambda$ is the sum of an element from $\ker J$ and an element from $\Har(\lambda)$. Let us prove uniqueness of the latter decomposition. Let $u\in \Har(\lambda)\cap\ker J$. Then $\calE(u,v)=\lam(u,v)$ for any $v\in\ker J$. As $u\in\ker J$,  if $u\neq 0$ then $\lam$ is an eigenvalue of $L_\Dir$, which is a contradiction and the proof is finished.
\end{proof}
%
As we aim for defining $\cel$ via Dirichlet principle, we adopt from now on,  the following assumption:
\begin{equation}
\dom J=\Har\oplus\ker J.
\label{lambda-direct}
\end{equation}
We mention that assumption (\ref{lambda-direct}) implies that $0$ is not an eigenvalue for $\calE_\Dir$.\\
Under assumption (\ref{lambda-direct}), according to the latter lemma together with Theorem \ref{D-to-N-general}, we are able to define $\cel$ via the Dirichlet principle:
\begin{align*}
\dom\cel=\ran J,\ \cel[Ju]&=\inf\{\calE_\lam[v],\ v\in\Har(\lam)\cap C_u\}\\
&=\min\{\calE_\lam[v],\ v\in\Har(\lam)\cap C_u\} =\calE_\lambda[P_\lambda u],
\end{align*}
for any $\lam \in\R\setminus\sigma_e(L_\Dir)$. Here $P_\lam u$ is the component of $u$ from $\Har(\lam)$ corresponding to the direct sum decomposition $\dom J=\Har(\lam)\oplus\ker J$.\\
For $\lam=0$ we shall denote $\ccalE_0$ simply by $\ccalE$ and $P_0$ by $P$.\\
Let us define the abstract {\em Poisson kernel operator}, $\Pi$ as follows:
\begin{equation}
\Pi:\dom\Pi=\ran J\subset\calH\to \Har,\ \text{such that}\ \Pi J=P.
\end{equation}
Observe that assumption (\ref{lambda-direct}) ensures that $\Pi$ is well defined.
\begin{lem}
The operator $\Pi$ is an isometric isomorphism from the normed space $(\ran J,\check\calE_{-1})$ into the normed  space $(\Har,\calE^J)$. Moreover, it holds
\[
\Pi=(J|_{\Har})^{-1}.
\]
\label{Poisson}
\end{lem}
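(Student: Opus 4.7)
The plan is to verify the three assertions --- well-definedness with $\Pi=(J|_{\Har})^{-1}$, the isometry identity, and bijectivity --- as direct consequences of the direct sum decomposition (\ref{lambda-direct}) and the identity $\ccalE[Ju]=\calE[Pu]$ established in Theorem \ref{FinalInf}.

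First I would justify that $\Pi$ is well defined on $\ran J$ by $\Pi(Ju):=Pu$. If $Ju=Ju'$, then $u-u'\in\ker J$; by the uniqueness of the decomposition (\ref{lambda-direct}), the $\Har$-component of an element of $\ker J$ is $0$, so $P(u-u')=0$ and hence $Pu=Pu'$. Next I would observe that $J|_{\Har}\from\Har\to\ran J$ is a bijection: injectivity comes from $\Har\cap\ker J=\{0\}$, while surjectivity follows from the fact that any $Ju\in\ran J$ satisfies $Ju=JPu$ with $Pu\in\Har$ (because $u-Pu\in\ker J$). Since for $v\in\Har$ the decomposition $v=v+0$ is the one granted by (\ref{lambda-direct}), we get $Pv=v$, whence $\Pi(Jv)=v$; this identifies $\Pi$ as the inverse of $J|_{\Har}$.

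For the isometry, a direct computation for arbitrary $u\in\dom J$ gives
\begin{align*}
\ccalE_{-1}[Ju]
  = \ccalE[Ju] + \|Ju\|_\aux^2
  = \calE[Pu] + \|JPu\|_\aux^2
  = \calE^J[Pu]
  = \calE^J[\Pi(Ju)],
\end{align*}
where the second equality uses $\ccalE[Ju]=\calE[Pu]$ and the identity $Ju=JPu$. This says that $\Pi$ carries the seminorm $\ccalE_{-1}^{1/2}$ to the seminorm $(\calE^J)^{1/2}$. Combined with bijectivity onto $\Har$, this yields the isometric isomorphism claim.

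There is no serious obstacle here: once (\ref{lambda-direct}) is available, everything reduces to bookkeeping. The only mild subtlety is to remember that the isometry is stated between $\ccalE_{-1}$ and $\calE^J$ rather than between $\ccalE$ and $\calE$, so one has to keep the $\|Ju\|_\aux^2=\|JPu\|_\aux^2$ term in sight to match the definition $\calE^J[u]=\calE[u]+\|Ju\|_\aux^2$.
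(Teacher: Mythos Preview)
Your proposal is correct and follows essentially the same route as the paper's proof: well-definedness and bijectivity are read off from the direct sum decomposition $\dom J=\Har\oplus\ker J$ together with $JPu=Ju$, and the isometry identity is the one-line computation $\calE^J[\Pi(Ju)]=\calE[Pu]+\|JPu\|_\aux^2=\ccalE[Ju]+\|Ju\|_\aux^2$. The only differences are cosmetic --- you verify well-definedness and the bijectivity of $J|_{\Har}$ a bit more explicitly than the paper does, but the ingredients and the order of the argument are the same.
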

\begin{proof}
If $\Pi Ju_1=\Pi Ju_2$, then $Pu_1=Pu_2$. Hence $JPu_1=Ju_1=JPu_2=Ju_2$ and  $\Pi$ is injective. The direct sum decomposition yields surjectivity of $\Pi$.\\
Let $\psi=Ju$. A straightforward computation leads to
\begin{equation}
\calE^J[\Pi\psi]= \calE[Pu] +\|JPu\|_\aux^2=\check\calE[\psi] + \|\psi\|_\aux^2.
\end{equation}
Hence $\Pi$ is an isometry.\\
Finally, from the definition of $\Pi$ we infer
\[
\Pi Ju= u,\ \forall\,u\in\Har,
\]
which yields the second claim.
\end{proof}
\begin{rk}
{\rm
At this stage we would like to emphasize that in contrast to the theory elaborated in \cite{Post} we do neither assume that $\calE$ is closed nor that it is densely defined nor that $\dom J=\calD$ and $J:(\calD,\calE_{-1}^{1/2})\to\calHaux$ is bounded. However, if it is the case and if $0$ is not an eigenvalue of $L_\Dir$, then an obvious modification of the proof of \cite[Proposition 2.15]{Post} in conjunction with Lemma \ref{EquivSum} lead to the decomposition (\ref{lambda-direct}).\\
Furthermore if we set $\Pi_{-1}$ the Poisson kernel operator related to $\calE_{-1}$ then $\Pi_{-1}$ coincide with what is called in \cite{Post} the 'Dirichlet solution map'.

}
\end{rk}

We are in position now to establish a representation formula for $\cel$ which will play a decisive role for investigating its properties.
\begin{theo}[A representation formula]
Let $u\in\dom J$ and $\psi=Ju$. Then
\begin{equation}
\check\calE_\lambda[\psi]= \check{\calE}[\psi] - \lam \big(L_\Dir(L_\Dir-\lam )^{-1}\Pi\psi,\Pi\psi\big).
\label{ForDec}
\end{equation}
\label{decomposition1}
\end{theo}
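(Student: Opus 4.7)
The plan is to compute $\check\calE_\lambda[\psi]=\calE_\lambda[P_\lambda u]$ directly via an explicit formula for the minimizer $P_\lambda u$. Given $u\in\dom J$ and $Pu=\Pi\psi\in\Har$, the proof of Lemma \ref{EquivSum} already suggests the candidate
$$P_\lambda u := Pu + w_\lambda, \qquad w_\lambda:=\lambda(L_\Dir-\lambda)^{-1}Pu.$$
First I would verify this is the right element: $w_\lambda\in\dom L_\Dir\subset\ker J$, so $J(Pu+w_\lambda)=\psi$, and for every $v\in\ker J$, the defining relation $(L_\Dir-\lambda)w_\lambda=\lambda Pu$ gives $\calE(w_\lambda,v)-\lambda(w_\lambda,v)=\lambda(Pu,v)$, while $\calE(Pu,v)=0$ by harmonicity. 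Adding these yields $\calE_\lambda(Pu+w_\lambda,v)=0$, so $Pu+w_\lambda\in\Har(\lambda)\cap C_u$; by uniqueness (Theorem \ref{CharacUnique} together with Lemma \ref{EquivSum}) this really is $P_\lambda u$.

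Second, I would expand
$$\calE_\lambda[Pu+w_\lambda] = \calE[Pu]+2\,{\rm Re}\,\calE(Pu,w_\lambda)+\calE[w_\lambda]-\lambda\|Pu\|^2-2\lambda\,{\rm Re}(Pu,w_\lambda)-\lambda\|w_\lambda\|^2.$$
Since $Pu\in\Har$ and $w_\lambda\in\ker J$, the cross term $\calE(Pu,w_\lambda)$ vanishes, and $\calE[w_\lambda]=(L_\Dir w_\lambda,w_\lambda)$. The $w_\lambda$-only contribution therefore telescopes to $((L_\Dir-\lambda)w_\lambda,w_\lambda)=\lambda(Pu,w_\lambda)$. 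Because $(Pu,w_\lambda)=\lambda((L_\Dir-\lambda)^{-1}Pu,Pu)$ is real, collecting the remaining terms gives
$$\check\calE_\lambda[\psi] = \calE[Pu]-\lambda\|Pu\|^2-\lambda(Pu,w_\lambda) = \calE[Pu]-\lambda(Pu,Pu+w_\lambda).$$
The resolvent identity $I+\lambda(L_\Dir-\lambda)^{-1}=L_\Dir(L_\Dir-\lambda)^{-1}$ then rewrites $Pu+w_\lambda$ as $L_\Dir(L_\Dir-\lambda)^{-1}Pu$, and recalling $\check\calE[\psi]=\calE[Pu]$ together with $\Pi\psi=Pu$ delivers exactly (\ref{ForDec}).

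The argument is largely bookkeeping and I do not expect a deep obstacle. The one point requiring attention is the domain tracking: $Pu\in\calH$ since $\Har\subset\dom J\subset\calH$; the bounded operator $L_\Dir(L_\Dir-\lambda)^{-1}$ is defined on all of $\calH$ because $\lambda\in\R\setminus\sigma_e(L_\Dir)$ and $L_\Dir$ has compact resolvent; and $w_\lambda$ lies in $\dom L_\Dir$, so the identity $\calE_\Dir[w_\lambda]=(L_\Dir w_\lambda,w_\lambda)$ is legitimate. These justifications keep the calculation rigorous but introduce no new idea.
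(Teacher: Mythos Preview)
Your proposal is correct and follows essentially the same route as the paper: the same explicit candidate $P_\lambda u=Pu+\lambda(L_\Dir-\lambda)^{-1}Pu$, the same verification via $\calE$-orthogonality of $Pu$ and $w_\lambda$, the same reduction $\calE_\lambda[w_\lambda]=\lambda(Pu,w_\lambda)$, and the same resolvent identity at the end. The only cosmetic difference is that the paper writes things through $K=L_\Dir^{-1}$ rather than $(L_\Dir-\lambda)^{-1}$ directly.
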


\begin{proof}

Let $u,\psi$ be as in the theorem. Set
\[
K:=L_\Dir^{-1},\ u_\lam:=P_\lambda u,\ v:= \Pi\psi\ \text{and}\ w_\lambda:=\lam(1-\lam K)^{-1}Kv = \lam K(1-\lam K)^{-1}v.
\]
Then $v\in\Har$, $Jv=\psi=Ju$ and $w_\lam\in\ker J$.
Let $w\in\ker J$. Then
\begin{align}
\calE_\lam(v+w_\lam,w)&=\calE_\lam(v,w) + \calE_\lam(w_\lam,w)= -\lam(v,w) + \lam\calE_{\Dir,\lam}((L_\Dir -\lam)^{-1}v,w)\nonumber\\
&=-\lam(v,w)+\lam(v,w)=0.
\end{align}
Hence $u_\lam = v + w_\lam$.\\
By definition of $\cel$ we have  $\check\calE_\lam[\psi]=\check\calE_\lam[u_\lam]$. Observing that $v$ and $w_\lam$ are $\calE$-orthogonal, we accordingly  obtain:
\begin{eqnarray}
\check\calE_\lam[\psi] &=& \check\calE_\lam[u_\lam]= \calE[v+w_\lam] -\lam\|v+w_\lam\|^2=\calE[v] + \calE[w_\lam] - \lam\|v+w_\lam\|^2 \nonumber\\
&=&\ccalE[\psi] +\calE_\lambda[w_\lambda] -\lam\|v\|^2 - 2\lam{\rm Re}\,(v,w_\lambda).
\end{eqnarray}
As $w_\lam=\lam(L_\Dir -\lam)^{-1}v$ we get
\[
\calE_\lam[w_\lam]= \lam(v,w_\lam).
\]
In particular,  $(v,w_\lam)$ is real. Thus
\[
\check{\calE}_\lambda[\psi]=\check{\calE}[\psi] -\lam\| v\|^2 -\lambda( v,w_\lambda).
\]
Having the formulae of $v$ and $w_\lam$ in mind we achieve
\begin{eqnarray}
 \cel[\psi] &=& \check{\calE}[\psi] -\lam\|\Pi\psi\|^2 -\lam^2((1-\lam K)^{-1}K\Pi\psi,\Pi\psi)\nonumber\\
 &=& \check{\calE}[\psi] - \lam(L_\Dir(L_\Dir-\lam)^{-1}\Pi\psi,\Pi\psi),
 \label{decomp}
\end{eqnarray}
and the proof is finished.

\end{proof}
\begin{rk}
{\rm
Formula (\ref{ForDec}) highlights the connection between, Dirichlet Laplacian, Dirichlet principle, Poisson kernel, and D-to-N operator. Furthermore it highlights the singular part of $\cel$.

}
\end{rk}
For $\cel$ to define a lower semibounded closed form for any $\lam\in\R\setminus\sigma(L_\Dir)$, according to Theorem \ref{D-to-N-general}, we have to impose further restrictions. Hence from know on we shall assume, unless otherwise stated, that: for any $\lam \in\R\setminus\sigma_e(L_\Dir) $ there is $c_\lam> 0$ such that
\begin{equation}
\calE_{\lam}[u]\geq -c_\lam\|J u\|_\aux^2,\ \forall\,u\in\Har(\lam)
\label{lambda-lsb}
\end{equation}
and
\begin{equation}
(\Har(\lam),\calE_\lam^{(1+c_\lam)J})\ \text{is a Hilbert space}.
\label{lambda-closed}
\end{equation}
On the light of Theorem \ref{D-to-N-general}, under assumptions  (\ref{lambda-lsb})-(\ref{lambda-closed}) together with Lemma \ref{EquivSum}, the form $\cel$ is lower semibounded densely defined  and closed.\\
Henceforth, we designate by $\check{L}_\lam$ the selfadjoint operator related to $\cel$ via Kato representation theorem. For $\lam=0$, the operator $\check{L}_0$ will be denoted simply by  $\check{L}$.\\
At his stage we would like to emphasize that similar construction for $\cel$ was developed in \cite{Post} via the concept of 'boundary pairs'. However, under the additional stronger assumptions that $\calE$ is closed, $J:(\calD,\calE_{-1}^{1/2})\to\calHaux$ is bounded and the boundary pair is elliptically regular.

\section{The asymptotic}
In order to perform asymptotic in the complex plane we shall first extend the trace from to complex numbers by extending formula (\ref{ForDec}). Precisely, for every  $z\in\C\setminus\sigma_e(L_\Dir)$ we define
\begin{equation}
\check\calE_z[\psi]:= \check{\calE}[\psi] - z \big(L_\Dir(L_\Dir-z )^{-1}\Pi\psi,\Pi\psi\big),\ \forall\,\psi\in\ran J.
\label{ComplexDecomp}
\end{equation}
Formula (\ref{ComplexDecomp}) shows that the mapping
\[
z\mapsto \check\calE_z[\psi],
\]
is meromorphic with poles the eigenvalues of $L_\Dir$. Owing to selfadjointness of $L_\Dir$ they are all simple poles.\\
From now on we designate by  $E$ any eigenvalue of $L_\Dir$ and $P_E$ its associated eigenprojection.
\subsection{Laurent and Mittag--Leffler expansions for the form}
\begin{theo}[Laurent expansion]
Let $E$ be an eigenvalue of $L_\Dir$ and $P_E$ be its associated eigenprojection. Let $C_E$ be a positively oriented small circle around $E$. Set
\[
A_0=\frac{1}{2i\pi}\int_{C_E}(z-E)^{-1} (L_\Dir -z)^{-1}\,dz\ \text{and}\ r_E= \|A_0\|.
\]

Then for every $\psi\in\ran J$ it holds
\begin{align}
\check\calE_z[\psi]&= \ccalE[\psi] + \frac{z}{z -E}\|L_\Dir^{\frac{1}{2}}P_E\Pi\psi\|^2 -z \sum_{k=0}^\infty (z-E)^k \|L_\Dir^{\frac{1}{2}} A_0^{\frac{k+1}{2}}\Pi\psi\|^2,\ 0<|z - E|<r_E,
\label{LaurentS1}
\end{align}
where the series is absolutely convergent.
\label{AbstractLaurent}
\end{theo}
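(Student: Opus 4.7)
The plan is to substitute the Kato--Laurent expansion of the resolvent $(L_\Dir-z)^{-1}$ around the isolated eigenvalue $E$ into the representation formula of Theorem~\ref{decomposition1}, then collect terms. Since $L_\Dir$ is selfadjoint with compact resolvent, $E$ is an isolated simple pole of $z\mapsto(L_\Dir-z)^{-1}$, and one has the classical expansion
\[
(L_\Dir-z)^{-1}=-\frac{P_E}{z-E}+\sum_{k=0}^{\infty}(z-E)^k A_0^{k+1},
\]
convergent in operator norm on a suitable punctured disc around $E$. The first step is to verify that the coefficient of the $k=0$ term coincides with the operator $A_0$ defined by the contour integral in the statement: inserting the expansion into $\tfrac{1}{2i\pi}\int_{C_E}(z-E)^{-1}(L_\Dir-z)^{-1}\,dz$ and applying Cauchy's residue theorem kills the $(z-E)^{-2}$ contribution and leaves exactly $A_0$. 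This identification also yields the standard properties of the reduced resolvent: $A_0$ is bounded selfadjoint, commutes with $L_\Dir$, vanishes on $\ran P_E$, satisfies $\ran A_0\subset\dom L_\Dir$, and obeys $(L_\Dir-E)A_0=I-P_E$.

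Next, I multiply the expansion termwise on the left by $L_\Dir$, which is legitimate since $P_E$ maps into the eigenspace of $E$ and $\ran A_0\subset\dom L_\Dir$. Using $L_\Dir P_E=EP_E$ this produces
\[
L_\Dir(L_\Dir-z)^{-1}=-\frac{EP_E}{z-E}+\sum_{k=0}^{\infty}(z-E)^k L_\Dir A_0^{k+1}.
\]
Setting $v:=\Pi\psi$ and pairing with $v$, the singular term becomes $-\tfrac{E\|P_Ev\|^2}{z-E}=-\tfrac{\|L_\Dir^{\frac{1}{2}}P_Ev\|^2}{z-E}$, since $P_Ev$ lies in the eigenspace of $E$ and $(L_\Dir P_Ev,P_Ev)=E\|P_Ev\|^2$. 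For each regular coefficient, the commuting selfadjoint operators $L_\Dir$ and $A_0$ (both functions of $L_\Dir$ via the spectral theorem) admit a joint spectral calculus under which $(L_\Dir A_0^{k+1}v,v)=\|L_\Dir^{\frac{1}{2}}A_0^{\frac{k+1}{2}}v\|^2$. Combining these two computations with Theorem~\ref{decomposition1} and noting that $-z\cdot(-\tfrac{1}{z-E})=\tfrac{z}{z-E}$ in the singular term reproduces formula~(\ref{LaurentS1}) verbatim.

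Absolute convergence on the claimed disc will follow from the geometric estimate $\|(z-E)^kL_\Dir A_0^{k+1}\|\le \|L_\Dir A_0\|\cdot(|z-E|\,\|A_0\|)^k$, using that $L_\Dir A_0=(I-P_E)+EA_0$ is bounded; the same bound controls the scalar terms $(z-E)^k\|L_\Dir^{\frac{1}{2}}A_0^{\frac{k+1}{2}}v\|^2$ and gives a convergent majorant. The main obstacle I anticipate is attaching rigorous meaning to the symbol $\|L_\Dir^{\frac{1}{2}}A_0^{\frac{k+1}{2}}v\|^2$ when $k+1$ is odd, since $A_0^{\frac{k+1}{2}}$ is then not an integer power of the selfadjoint operator $A_0$ and the operator $L_\Dir A_0^{k+1}$ need not be positive when $E$ is not the smallest eigenvalue of $L_\Dir$. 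I would resolve this by passing to the spectral decomposition of $L_\Dir$ on the invariant subspace $\ran(I-P_E)$, where $A_0=\sum_{j:E_j\neq E}(E_j-E)^{-1}P_{E_j}$ and the symbol is unambiguously defined as the (possibly signed) sum $\sum_{j:E_j\neq E}E_j(E_j-E)^{-(k+1)}\|P_{E_j}v\|^2$, which coincides with the bilinear pairing $(L_\Dir A_0^{k+1}v,v)$ and thereby fully justifies the claimed representation.
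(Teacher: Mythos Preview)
Your argument follows essentially the same route as the paper: insert the standard Laurent expansion of $(L_\Dir-z)^{-1}$ at the isolated simple pole $E$ into the representation formula~(\ref{ComplexDecomp}) and read off the coefficients. The paper's proof is terser---it simply invokes the expansion $(L_\Dir-z)^{-1}=\frac{-P_E}{z-E}+\sum_{k\ge0}(z-E)^kA_0^{k+1}$ and ``continuity of the scalar product''---while you spell out the multiplication by $L_\Dir$, the identification of $A_0$ via the contour integral, and the geometric convergence estimate.

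One point worth noting: your concern about the symbol $\|L_\Dir^{1/2}A_0^{(k+1)/2}\Pi\psi\|^2$ when $k$ is even is entirely legitimate and is \emph{not} addressed in the paper. Since $A_0$ has negative spectral values whenever $E$ is not the lowest Dirichlet eigenvalue, $A_0^{(k+1)/2}$ is not a bona fide selfadjoint operator and the quantity $(L_\Dir A_0^{k+1}v,v)$ can indeed be negative. Your resolution---interpreting the squared norm purely as shorthand for the bilinear pairing $(L_\Dir A_0^{k+1}v,v)=\sum_{E_j\neq E}E_j(E_j-E)^{-(k+1)}\|P_{E_j}v\|^2$---is the correct reading and is in fact more careful than the paper's own presentation.
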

\begin{proof}
From the standard theory of meromorphic operator valued  functions and since $E$ is a simple pole for $(L_\Dir-z )^{-1}$, the following Laurent expansion holds true
\[
(L_\Dir-z )^{-1} = \frac{A_{-1}}{z-E} + \sum_{k=0}^{\infty} (z - E)^k A_k,\ 0<|z - E|<r_E\ \text{uniformly},
\]
where $A_k=A_0^{k+1}$ and $A_{-1}=-P_E$. Finally making use of the representation formula (\ref{ComplexDecomp}) for $\check{\calE}_z$ together with continuity of the scalar product, we get the desired expansion.
\end{proof}
\begin{coro}
The following asymptotic behavior is true:
\[
\lim_{z\to E}\big((z-E)\ccalE_z[\psi]\big)=  E\|L_\Dir^{1/2}P_E\Pi\psi\|^2,\  \forall\,\psi\in\ran J.
\]
It follows in particular,
\begin{enumerate}
\item $\lim_{\lam\uparrow E}\cel[\psi]=-\infty$ for all $\psi\in\ran J$.
\item $\lim_{\lam\downarrow E}\cel[\psi]=\infty$ for all $\psi\in\ran J$.
\item $|\ccalE_z[\psi]|$ grows as fast as $E|z - E|^{-1}\|L_\Dir^{1/2}P_E\Pi\psi\|^2$ when approaching the singularity $E$.
\end{enumerate}
\end{coro}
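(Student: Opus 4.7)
The plan is to extract everything directly from the Laurent expansion (\ref{LaurentS1}) of Theorem \ref{AbstractLaurent}, which already displays $\ccalE_z[\psi]$ as the sum of a regular term $\ccalE[\psi]$, a single simple-pole contribution $\frac{z}{z-E}\|L_\Dir^{1/2}P_E\Pi\psi\|^2$, and a power series $-z\sum_{k=0}^\infty (z-E)^k\|L_\Dir^{1/2}A_0^{(k+1)/2}\Pi\psi\|^2$ that converges absolutely on the punctured disk $0<|z-E|<r_E$. Every assertion of the corollary is a direct consequence of this structural formula.

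For the residue identity, I would multiply (\ref{LaurentS1}) through by $(z-E)$. The regular term contributes $(z-E)\ccalE[\psi]\to 0$; the singular term yields $z\|L_\Dir^{1/2}P_E\Pi\psi\|^2\to E\|L_\Dir^{1/2}P_E\Pi\psi\|^2$; and the analytic remainder, being uniformly bounded on a small closed disk around $E$, gets killed by the factor $(z-E)$. Passing to the limit $z\to E$ gives the asserted value of $\lim_{z\to E}(z-E)\ccalE_z[\psi]$.

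For items (1) and (2), I would restrict $z=\lambda$ to the real axis. The standing hypothesis (\ref{lambda-direct}) together with Lemma \ref{EquivSum} ensures $0\notin\sigma_e(L_\Dir)$, and since $\calE$ and hence $L_\Dir$ are positive, every eigenvalue $E$ is strictly positive. Therefore $\lambda>0$ on a small real neighborhood of $E$, so the coefficient $\frac{\lambda}{\lambda-E}$ tends to $-\infty$ as $\lambda\uparrow E$ and to $+\infty$ as $\lambda\downarrow E$. Since this diverging quantity multiplies the nonnegative factor $\|L_\Dir^{1/2}P_E\Pi\psi\|^2$ (nonzero in the nontrivial case, which is the content of the assertion) while the regular part and the analytic remainder of (\ref{LaurentS1}) stay bounded, the one-sided limits of $\cel[\psi]$ are $-\infty$ and $+\infty$ respectively. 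Claim (3) is then immediate: rearranging (\ref{LaurentS1}) shows that $\ccalE_z[\psi]-\frac{E\|L_\Dir^{1/2}P_E\Pi\psi\|^2}{z-E}$ remains bounded as $z\to E$, so the leading-order growth rate of $|\ccalE_z[\psi]|$ is exactly $E|z-E|^{-1}\|L_\Dir^{1/2}P_E\Pi\psi\|^2$.

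There is no real obstacle here, since the Laurent expansion has already absorbed the analytical work. The only small verification worth flagging is the strict positivity of $E$, needed to fix the signs in (1) and (2); this is where the standing assumption that $0$ is not a Dirichlet eigenvalue enters essentially. A reader-friendly writing would also mention the tacit nondegeneracy $L_\Dir^{1/2}P_E\Pi\psi\neq 0$, without which the pole at $E$ is absent and $\ccalE_z[\psi]$ is simply continuous there.
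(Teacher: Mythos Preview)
Your approach is exactly that of the paper, which simply states that the corollary derives directly from the Laurent expansion of Theorem \ref{AbstractLaurent} and omits the proof. Your additional remark about the tacit nondegeneracy hypothesis $L_\Dir^{1/2}P_E\Pi\psi\neq 0$ is a worthwhile caveat that the paper's statement of items (1)--(3) leaves implicit.
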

The corollary derives directly from Theorem \ref{AbstractLaurent}, so we omit its proof.\\
We proceed now to establish Mittag-Leffler expansion for $\ccalE_z$.\\
Let $E_0\leq E_1\leq\cdots\leq E_k\cdots$ be the increasing arrangement for the eigenvalues of $L_\Dir$ where each $E_k$ is repeated as many times as its multiplicity. Let $(u_k)$ be the corresponding orthonormal basis of eigenfunctions.
\begin{theo}[Mittag-Leffler expansion]
Let $z\in\C\setminus\sigma_e(L_\Dir)$ and  $\psi\in\ran J$. Then
\begin{equation}
\ccalE_z[\psi] = \ccalE[\psi] + z\sum_{k=0}^\infty \frac{E_k}{z-E_k}\left| (\Pi\psi,u_k) \right|^2,
\label{Mittag}
\end{equation}
where the series converges absolutely.
\label{ML}
\end{theo}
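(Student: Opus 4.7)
The plan is to derive the Mittag--Leffler expansion directly from the representation formula (\ref{ComplexDecomp}) together with the spectral decomposition of $L_\Dir$. The key observation is that since $\ker J$ is dense in $\calH$, $L_\Dir$ is self-adjoint on $\calH$ with compact resolvent, and therefore the eigenfunctions $(u_k)$ form a complete orthonormal basis of $\calH$. Since $\Pi\psi \in \Har \subset \dom J \subset \calH$, we may expand
\[
\Pi\psi = \sum_{k=0}^{\infty}(\Pi\psi,u_k)\, u_k,
\qquad
\sum_{k=0}^{\infty}|(\Pi\psi,u_k)|^2 = \|\Pi\psi\|^2 < \infty .
\]

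Next I would apply the bounded Borel functional calculus to the function $t\mapsto \dfrac{t}{t-z}$ (which is bounded on $\sigma(L_\Dir)$ for $z\notin\sigma_e(L_\Dir)$) to the self-adjoint operator $L_\Dir$. This gives
\[
L_\Dir(L_\Dir-z)^{-1}\Pi\psi
= \sum_{k=0}^{\infty} \frac{E_k}{E_k-z}\,(\Pi\psi,u_k)\, u_k .
\]
Taking the inner product against $\Pi\psi$ and using orthonormality yields
\[
\bigl(L_\Dir(L_\Dir-z)^{-1}\Pi\psi,\Pi\psi\bigr)
= \sum_{k=0}^{\infty} \frac{E_k}{E_k-z}\,|(\Pi\psi,u_k)|^2 .
\]
Substituting this into formula (\ref{ComplexDecomp}) and rearranging the sign gives exactly (\ref{Mittag}).

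It remains to verify absolute convergence. Because $L_\Dir$ has compact resolvent, $E_k\to\infty$, and hence for each fixed $z\in\C\setminus\sigma_e(L_\Dir)$ the sequence $\left(\dfrac{zE_k}{z-E_k}\right)_k$ is bounded (it converges to $-z$). Combined with the $\ell^2$-summability of $(\Pi\psi,u_k)$ already noted, this gives
\[
\sum_{k=0}^{\infty}\left|\frac{zE_k}{z-E_k}\right|\,|(\Pi\psi,u_k)|^2 < \infty,
\]
which is absolute convergence.

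The only point requiring mild care is the interchange of the infinite sum with the inner product; this is standard once the functional calculus representation is in place, since the partial sums converge to $L_\Dir(L_\Dir-z)^{-1}\Pi\psi$ in $\calH$ and the inner product is continuous. I do not anticipate any real obstacle: the proof is essentially a one-line consequence of Theorem \ref{decomposition1} once the spectral expansion of $\Pi\psi$ is inserted, and it also yields Theorem \ref{AbstractLaurent} as a special local (Laurent) form of the same identity.
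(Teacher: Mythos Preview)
Your proof is correct and follows essentially the same route as the paper: apply the spectral theorem to write $L_\Dir(L_\Dir-z)^{-1}\Pi\psi$ as an eigenfunction series, then insert this into the representation formula (\ref{ComplexDecomp}) and use continuity of the inner product. Your treatment is in fact more complete, since you supply the explicit argument for absolute convergence (boundedness of $zE_k/(z-E_k)$ together with $\sum_k|(\Pi\psi,u_k)|^2<\infty$) that the paper asserts but does not spell out.
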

\begin{proof}
By the spectral theorem we get, for every $u\in \calH$
\[
L_\Dir(L_\Dir -z)^{-1}u= \sum_{k=0}^\infty \frac{E_k}{E_k - z} (u,u_k)u_k.
\]
Making use of the representation formula (\ref{ComplexDecomp}) together with the continuity of the scalar product we obtain the sought formula.
\end{proof}
\begin{rk}
{\rm
\begin{enumerate}
\item The connotation  'Mittag--Leffler expansion' is justified by the fact that the expansion can be written in the form
\[
\ccalE_z[\psi] = \ccalE[\psi] + z \sum_{k=0}^\infty \left| (\Pi\psi,u_{k}) \right|^2
+ \sum_{k=0}^\infty \big( \frac{E_k^2}{z-E_k} + E_k (z+E_k)\big) \left| (\Pi\psi,u_{k}) \right|^2.
\]
This is plainly the Mittag-Leffler expansion for $\ccalE_z$.
\item For later use, we emphasize that the expansion can also  be written in an other form. For, let $m_k$ be the multiplicity of $E_k$ and  $(u_{1k},\cdots,u_{m_{k}k})$ be an orthonormal eigenbasis for $E_k$. Then according to the expansion (\ref{Mittag}) we have

\begin{equation}
\ccalE_z[\psi] = \ccalE[\psi] + z\sum_{k=0}^\infty \frac{E_k}{z-E_k}\sum_{l=1}^{m_k} \left| (\Pi\psi,u_{lk}) \right|^2.
\label{Mittag-2}
\end{equation}
\end{enumerate}
}
\end{rk}

\subsection{The eigenvalues near the poles}

Assume that $\check{L}_\lam$ has compact resolvent for some (and hence every) $\lam\in\R\setminus\sigma_e(L_\Dir)$. Let us turn our attention to study properties of eigenvalues of $\check L_\lambda$ near the singularities. To that and we shall establish a monotony property for $\cel$.
\begin{lem}
For every fixed $\psi\in\ran J$, the mapping $\lam\mapsto\cel[\psi]$ is strictly decreasing on each interval of $\R\setminus\sigma_e(L_\Dir)$.
\label{decrease}
\end{lem}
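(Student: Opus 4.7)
The plan is to exploit the Mittag--Leffler expansion (\ref{Mittag}) of Theorem \ref{ML}:
\[
\ccalE_\lam[\psi] \;=\; \ccalE[\psi] \;+\; \sum_{k=0}^{\infty}\frac{\lam\,E_k}{\lam-E_k}\,\bigl|(\Pi\psi,u_k)\bigr|^{2},
\]
which holds for every $\lam\in\R\setminus\sigma_e(L_\Dir)$ and converges absolutely on compact subsets of the complement of the eigenvalues. Since the dependence on $\lam$ is concentrated in the scalar factors in front of the $\lam$-independent coefficients $|(\Pi\psi,u_k)|^{2}$, the monotonicity question reduces to a term-by-term analysis of the functions $g_k(\lam):=\lam E_k/(\lam-E_k)$.

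I would then examine each $g_k$ separately. A direct calculation gives
\[
g_k'(\lam) \;=\; \frac{E_k(\lam-E_k)-\lam E_k}{(\lam-E_k)^{2}} \;=\; -\frac{E_k^{2}}{(\lam-E_k)^{2}},
\]
which is strictly negative for every real $\lam\neq E_k$; observe that $E_k>0$ for every $k$, since $\calE\geq 0$ and $0\notin\sigma_e(L_\Dir)$ by the hypotheses in force at the start of Section~3. Hence each $g_k$ is strictly decreasing on every interval of $\R$ that avoids $E_k$, and in particular on any connected component $I\subseteq\R\setminus\sigma_e(L_\Dir)$, which by definition avoids \emph{all} eigenvalues. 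Taking two points $\lam_1<\lam_2$ in such an $I$ and subtracting the two absolutely convergent series term by term gives
\[
\ccalE_{\lam_2}[\psi]-\ccalE_{\lam_1}[\psi] \;=\; \sum_{k=0}^{\infty}\bigl(g_k(\lam_2)-g_k(\lam_1)\bigr)\,\bigl|(\Pi\psi,u_k)\bigr|^{2},
\]
a sum of non-positive terms.

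It remains to upgrade non-increase to \emph{strict} decrease whenever $\psi\neq 0$. Lemma \ref{Poisson} asserts that $\Pi\colon\ran J\to\Har$ is an isomorphism, hence $\Pi\psi\neq 0$. Because $L_\Dir$ has compact resolvent and $\ker J$ is dense in $\calH$, the normalised eigenfunctions $(u_k)$ form an orthonormal basis of $\calH$; therefore at least one coefficient $(\Pi\psi,u_k)$ is nonzero, and the corresponding summand in the displayed identity is strictly negative. Combined with the non-positivity of every other summand this yields the claimed strict monotonicity. I expect the only delicate step, and hence the main bookkeeping obstacle, to be the justification of term-by-term subtraction of the Mittag--Leffler series across the two values $\lam_1,\lam_2$; this is legitimate thanks to the absolute convergence established in Theorem \ref{ML}, since both $\lam_1$ and $\lam_2$ lie in $\R\setminus\sigma_e(L_\Dir)$.
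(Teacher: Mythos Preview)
Your proof is correct but takes a different route from the paper. The paper differentiates the representation formula (\ref{ForDec}) directly, using the resolvent identity to obtain
\[
\frac{d}{d\lam}\cel[\psi]= -\bigl(L_\Dir(L_\Dir - \lambda)^{-2}\, \Pi\psi,\Pi\psi\bigr) = -\bigl\| L_\Dir^{1/2}(L_\Dir -\lam)^{-1}\Pi\psi\bigr\|^{2}\leq 0,
\]
which is strictly negative once $\Pi\psi\neq 0$. Your argument is the spectral-side version of the same computation: expanding in the eigenbasis $(u_k)$ replaces the operator derivative by the series $-\sum_k E_k^{2}(\lam-E_k)^{-2}|(\Pi\psi,u_k)|^{2}$, and you then compare two values of $\lam$ rather than differentiating. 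Your route has the merit of making the \emph{strict} monotonicity fully explicit (you isolate a nonzero coefficient via Lemma \ref{Poisson} and completeness of the eigenbasis), a point the paper leaves implicit; the paper's route is shorter and sidesteps any justification of term-by-term manipulation of the Mittag--Leffler series.
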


\begin{proof}
According to formula (\ref{decomposition1}), the form $\cel[\psi]$ is $\lam$-differentiable. Moreover,
making use of the first resolvent formula, we obtain
\begin{equation}
\frac{d}{d\lam}\cel[\psi]= - (L_\Dir(L_\Dir - \lambda)^{-2} \Pi\psi,\Pi\psi) =-\| L_\Dir^{1/2}(L_\Dir -\lam)^{-1}\Pi\psi\|^2\leq 0,\ \forall\,\lambda\in\R\setminus\sigma_e(L),
\end{equation}
which was to be proved.
\end{proof}

\begin{theo}
Let $\check E(\lambda)$ be an eigenvalue of $\cel$.
\begin{enumerate}
\item The mapping
\[
\R\setminus\sigma_e(L_\Dir)\to\R,\  \lambda\mapsto\check{E}(\lambda),
\]
is strictly decreasing on each interval of $\R\setminus\sigma(L_\Dir)$.
\item Let $E$ be any eigenvalue of $L_\Dir$ which is a singularity for $\check{E}(\lam)$. Then
\[
\lim_{\lambda\uparrow E}\check{E}(\lambda)=-\infty,\ \lim_{\lambda\downarrow E}\check{E}(\lambda)=\infty.
\]
\end{enumerate}
\end{theo}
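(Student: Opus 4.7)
The plan is to deduce (1) from the min--max principle combined with Lemma~\ref{decrease}, and to deduce (2) by a contradiction argument using the Laurent expansion of Theorem~\ref{AbstractLaurent} to isolate the singular term.

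For (1), compactness of the resolvent of $\check L_\lam$ and closedness of $\cel$ give the min--max representation of the $n$-th eigenvalue (ordered increasingly with multiplicity):
\[
\check E_n(\lam)=\inf\Big\{\sup_{\psi\in V\setminus\{0\}}\frac{\ccalE_\lam[\psi]}{\|\psi\|_\aux^2}\colon V\subset\ran J,\ \dim V=n\Big\}.
\]
Fix $\lam_1<\lam_2$ in one component of $\R\setminus\sigma_e(L_\Dir)$ and let $V^\ast$ be the span of the first $n$ eigenfunctions of $\check L_{\lam_1}$, so that $V^\ast$ attains the infimum at $\lam_1$. By Lemma~\ref{decrease}, $\ccalE_{\lam_1}[\psi]>\ccalE_{\lam_2}[\psi]$ for every $\psi\in V^\ast\setminus\{0\}$; since the Rayleigh quotient is continuous and the unit sphere of the finite-dimensional $V^\ast$ is compact, the strict inequality is preserved under the supremum, yielding $\check E_n(\lam_1)>\check E_n(\lam_2)$.

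For (2), the strict monotonicity proved in (1) guarantees the one-sided limits $L^+:=\lim_{\lam\downarrow E}\check E(\lam)\in(-\infty,+\infty]$ and $L^-:=\lim_{\lam\uparrow E}\check E(\lam)\in[-\infty,+\infty)$ exist. I will argue by contradiction that $L^+<+\infty$ is incompatible with $E$ being a singularity of the branch. Pick, for $\lam\in(E,E+\ep)$, unit eigenvectors $\phi_\lam\in\ran J$ with $\ccalE_\lam[\phi_\lam]=\check E(\lam)$. The Laurent expansion of $\ccalE_z$ around $E$ yields
\[
\check E(\lam)=\ccalE[\phi_\lam]+\frac{\lam E}{\lam-E}\|P_E\Pi\phi_\lam\|^2-\lam\mathcal R_\lam[\phi_\lam],
\]
where $\mathcal R_\lam[\phi_\lam]=\sum_{k\geq 0}(\lam-E)^k\|L_\Dir^{1/2}A_0^{(k+1)/2}\Pi\phi_\lam\|^2$ is the analytic part at $E$. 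Since the left-hand side is bounded above by $L^+$, $\ccalE[\phi_\lam]\geq 0$, and $\lam E/(\lam-E)\to+\infty$, one deduces $\|P_E\Pi\phi_\lam\|\to 0$ and $\ccalE[\phi_\lam]$ stays bounded (the analytic remainder $\mathcal R_\lam[\phi_\lam]$ being controlled once $\ccalE[\phi_\lam]$ is, thanks to the separation of the other eigenvalues of $L_\Dir$ from $E$). Extracting a weakly convergent subsequence $\phi_{\lam_j}\rightharpoonup\phi_\ast$ in the form Hilbert space of $\ccalE$, closedness of $\ccalE$ identifies $\phi_\ast$ as an eigenvector with eigenvalue $L^+$ of the form obtained by restricting the Dirichlet-principle construction at $\lam=E$ to the closed subspace $\{\psi\in\ran J:P_E\Pi\psi=0\}$, where the singular contribution of $E$ vanishes. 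This produces a continuous extension of the branch $\check E(\lam)$ to $\lam=E$ with the finite value $L^+$, contradicting singularity. A symmetric argument on $(E-\ep,E)$ yields $L^-=-\infty$; the signs of the infinities are dictated by those of $\lam E/(\lam-E)$ on either side of $E$.

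The main obstacle will be the limit argument in (2): rigorously constructing the limit form at $\lam=E$ on the subspace $\{P_E\Pi\psi=0\}$ and identifying $\phi_\ast$ as one of its eigenvectors, so that finiteness of $L^+$ genuinely produces a removable singularity. A slicker alternative is to invoke Kato's analytic perturbation theory for the holomorphic family $z\mapsto\ccalE_z$ defined in (\ref{ComplexDecomp}): each eigenvalue branch of $\check L_z$ is meromorphic on a complex disc centred at $E$ with $E$ as its only possible pole, so a one-sided finite real limit forces the pole to be removable, whence both limits coincide and are finite, contradicting the assumption that $E$ is a singularity of the branch.
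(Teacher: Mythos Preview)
Your argument for part (1) is correct and is exactly the paper's approach: min--max together with the strict monotonicity of $\lambda\mapsto\cel[\psi]$ from Lemma~\ref{decrease}. Your extra care about preserving the strict inequality via compactness of the finite-dimensional unit sphere is a welcome detail that the paper leaves implicit.

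For part (2) you are working far harder than necessary, and in a sense you are proving the wrong thing. The paper's proof is a single sentence: by (1), $\check E(\lambda)$ is strictly decreasing on each component adjacent to $E$, so the one-sided limits exist with $\lim_{\lambda\uparrow E}\check E(\lambda)\in[-\infty,+\infty)$ and $\lim_{\lambda\downarrow E}\check E(\lambda)\in(-\infty,+\infty]$; the \emph{hypothesis} that $E$ is a singularity of the branch then forces these to be $-\infty$ and $+\infty$ respectively. You are effectively trying to prove that $E$ \emph{must} be a singularity of $\check E(\lambda)$, but that is false in general --- the Remark immediately after the theorem stresses that not every Dirichlet eigenvalue is a singularity for a given branch --- and it is explicitly part of the assumptions, not the conclusion.

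Your Laurent/compactness route, besides being unnecessary, also has a real gap in the abstract setting. To conclude that $\ccalE[\phi_\lambda]$ stays bounded you need to control the remainder $\mathcal R_\lambda[\phi_\lambda]$, and this requires a bound of the type $\|\Pi\phi_\lambda\|_{\calH}\le c\|\phi_\lambda\|_\aux$; no such estimate is available under the standing hypotheses of Section~4 (Lemma~\ref{Pi-continuity} supplies it for Lipschitz domains, but not abstractly). Without it the circular step you flag (``$\mathcal R_\lambda$ controlled once $\ccalE[\phi_\lambda]$ is'') cannot be closed. Your Kato-perturbation alternative is the right tool if one wants to justify that the eigenvalue branches are meromorphic in $z$, hence that ``singularity'' means ``pole'' and both one-sided limits are automatically infinite; but for the theorem as stated, with singularity as a hypothesis, monotonicity alone already pins down the signs, and no further machinery is needed.
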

\begin{proof}
The first assertion  is consequence of the min-max principle for successive eigenvalues together with Lemma \ref{decrease}. The second assertion follows from monotony of $\check{E}(\lam)$ and the fact that $E$ is a singularity for $\check{E}(\lam)$.
\end{proof}
\begin{rk}
{\rm
We stress that not every eigenvalue of $L_\Dir$ is a singularity for $\check{E}(\lam)$. Concrete examples for this fact can be found in \cite{Daners} or in the examples analyzed at the end of the current paper.
}
\end{rk}
\section{Positivity preservation}
In this section we assume that $\calH=L^2(X,m)$ and $\calHaux=L^2(X,\mu)$ (real Hilbert spaces), where $(X,m), (X,\mu)$ are $\sigma$-finite measure spaces and $m,\mu$ are positive measures on some $\sigma$-algebras of $X$.\\
We maintain the assumption that $\calE$ is positive together with assumptions (\ref{lambda-direct})-(\ref{lambda-lsb})-(\ref{lambda-closed}) from the latter section.\\
Furthermore, we  assume that the form $\calE$ is  closed, densely defined and is a  semi-Dirichlet form, i.e. its related semigroups
\begin{equation}
e^{-tL},\ t>0\ \text{is positivity preserving}.
\end{equation}
Equivalently,
\[
u\in\dom\calE\Rightarrow\,|u|\in\dom\calE\ \text{and}\ \calE[|u|]\leq \calE[u],
\]
or (Beurling--Deny criterion)
\[
u\in\dom\calE\Rightarrow\,u^\pm\in\dom\calE\ \text{and}\ \calE(u^+,u^-)\leq 0.
\]
Thereby the form   $\calE_{\Dir}$ is a  semi-Dirichlet form as well and hence its related semigroup, $e^{-tL_\Dir},\ t>0$ is also positivity preserving.\\
Obviously $\calE_{\lam}$ is a semi-Dirichlet form for every $\lam\in\R$.\\
Let $\ctl:=e^{-t\check{L}_\lam},\ t>0$ be the semigroup related to the form $\cel$. We shall exploit the already establish asymptotic to  discuss into which extend the positivity preservation property is inherited by the semigroup of the D-to-N operator, $\ctl$. It is expected that positivity property will depend on $\lambda$.\\
We shall use the abbreviation p.p. to mean 'positivity preserving'.\\
At this stage we mention that  some partial results concerning positivity in one and two dimensions can be found in \cite{Daners} and for bounded Lipschitz domains in \cite{Arendt-Mazzeo}.\\
It is  not possible to go ahead without some additional assumptions on the map $J$. Henceforth we assume that $\dom J=\dom\calE=\calD$ furthermore
\begin{equation}
u\in\calD\Rightarrow\, |Ju|=J|u|.
\label{J-positivity}
\end{equation}
Let us first investigate positivity of $\Pi$.
\begin{lem}
Let $\psi\in\ran J$ be positive. Then $\Pi\psi$ is positive as well.
\label{Pi-positivity}
\end{lem}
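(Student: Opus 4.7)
The plan is to exploit the variational characterisation of $\Pi\psi$: if $u\in\dom J$ satisfies $Ju=\psi$, then $\Pi\psi = Pu$ is the \emph{unique} minimiser of $\calE$ over the affine manifold $C_u = u+\ker J$. The strategy is to show that when $\psi\geq 0$, the positive part of $v:=Pu$ also lies in $C_u$ and carries no more energy than $v$ itself; uniqueness of the minimiser will then force $v^+ = v$.

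First I would unpack assumption (\ref{J-positivity}) into its useful consequence on positive and negative parts. Since $J$ is linear and $J|w|=|Jw|$, adding and subtracting the identities $Jw = Jw^+ - Jw^-$ and $|Jw| = Jw^+ + Jw^-$ yields $Jw^+ = (Jw)^+$ and $Jw^- = (Jw)^-$ for every $w\in\calD = \dom J$. Applying this to $v = Pu\in\calD$, and using $Jv = JPu = Ju = \psi\geq 0$, I obtain $Jv^- = (Jv)^- = 0$ and $Jv^+ = (Jv)^+ = \psi$. Hence $v^-\in\ker J$ and $v^+ \in C_u$.

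Next I invoke the semi-Dirichlet property of $\calE$. The Beurling--Deny inequality $\calE(v^+,v^-)\leq 0$, combined with the algebraic identity
\[
\calE[v] = \calE[v^+] - 2\calE(v^+,v^-) + \calE[v^-],
\]
together with $\calE[v^-]\geq 0$ (positivity of $\calE$), gives $\calE[v^+] \leq \calE[v]$. On the other hand, since $v^+\in C_u$ and $v=Pu$ is a minimiser of $\calE$ on $C_u$ (Theorem \ref{FinalInf}), the reverse inequality $\calE[v]\leq\calE[v^+]$ holds. Thus $\calE[v^+]=\calE[v]$ and $v^+$ is another minimiser on $C_u$; by the uniqueness furnished by the direct sum decomposition (\ref{directSum}), $v^+ = v$, i.e.\ $v^- = 0$ and $\Pi\psi = v\geq 0$.

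The argument is essentially a clean application of Beurling--Deny to a variational problem, so I do not expect a serious obstacle. The only subtle point is the verification that $v^+$ belongs to $C_u$, which is exactly where assumption (\ref{J-positivity}) is needed to pass absolute values through $J$; without it, $v^+$ would only be known to lie in $\dom J$, not in the correct affine class.
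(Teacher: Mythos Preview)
Your proof is correct and follows essentially the same route as the paper: both show that a ``positivised'' competitor ($v^+$ in your case, $|Pu|$ in the paper) lies in $C_u$ thanks to assumption (\ref{J-positivity}), has energy at most $\calE[Pu]$ by the semi-Dirichlet property, and then invoke uniqueness of the minimiser. The only cosmetic difference is that you work with $v^+$ and the Beurling--Deny inequality $\calE(v^+,v^-)\le 0$, whereas the paper uses $|Pu|$ and the inequality $\calE[|Pu|]\le\calE[Pu]$ directly.
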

\begin{proof}
Let $\psi\in\ran J$ be positive and $u\in\calD$ such that $Ju=\psi$. By assumption (\ref{J-positivity}) we get $Ju=J|u|\geq 0$. Thus we may and shall assume that $u\geq 0$. By assumption (\ref{J-positivity}) once again we obtain $J(|Pu|)=|J Pu|=Ju$. Owing to Dirichlet principle we get $\ccalE[Ju]=\calE[Pu]\leq \calE[|Pu|]$. On the other hand as $\calE$ is a semi-Dirichlet form we get $\calE[|Pu|]\leq\calE[Pu]$. Hence from uniqueness we derive $Pu=|Pu|\geq 0$. Thus $\Pi\psi=Pu\geq 0$. \end{proof}

\begin{lem}
$\check{T}_t(0)$ is p.p.
\label{O-p.p.}
\end{lem}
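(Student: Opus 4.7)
My plan is to verify the Beurling--Deny criterion for $\ccalE$ directly: for every $\psi\in\dom\ccalE$, I will show that $|\psi|\in\dom\ccalE$ and $\ccalE[|\psi|]\leq\ccalE[\psi]$, which is equivalent to positivity preservation of $\check{T}_t(0)=e^{-t\check L}$.

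The first step is a membership check. Fix $\psi\in\dom\ccalE=\ran J$ and write $\psi=Ju$ for some $u\in\calD=\dom J$. Using the hypothesis (\ref{J-positivity}), we have $|\psi|=|Ju|=J|u|\in\ran J$, so $|\psi|\in\dom\ccalE$, and moreover the minimizer $P|u|\in\Har$ given by Theorem \ref{FinalInf} is well-defined.

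The key computation is to compare $\ccalE[|\psi|]$ with $\ccalE[\psi]$ via two chained inequalities. By the Dirichlet principle, $\ccalE[\psi]=\calE[Pu]$. Now $Pu\in\calD$, and applying (\ref{J-positivity}) to $Pu$ gives $J|Pu|=|JPu|=|Ju|=|\psi|$. Consequently $|Pu|$ is an admissible competitor in the infimum defining $\ccalE[|\psi|]$, so
\begin{equation*}
\ccalE[|\psi|]\;\leq\;\calE[|Pu|].
\end{equation*}
Since $\calE$ is assumed to be a semi-Dirichlet form, its defining contraction inequality yields $\calE[|Pu|]\leq\calE[Pu]=\ccalE[\psi]$. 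Chaining these gives $\ccalE[|\psi|]\leq\ccalE[\psi]$, as required, and by Beurling--Deny $\check T_t(0)$ is positivity preserving.

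I do not expect serious obstacles: the argument is essentially a transfer of the semi-Dirichlet property from $\calE$ to $\ccalE$ mediated by the Dirichlet principle, and the compatibility of $J$ with the modulus in (\ref{J-positivity}) is precisely what is needed to make the competitor $|Pu|$ admissible. The only subtle point worth stating explicitly is that $|Pu|$ need \emph{not} equal $\Pi|\psi|=P|u|$ (this would actually give the stronger statement proved in Lemma \ref{Pi-positivity}); we only need it as an upper test function for the infimum, which is already enough for the inequality on the form. Lemma \ref{Pi-positivity} is therefore not strictly required here, though it offers an alternative route: since $\Pi\psi\geq 0$ when $\psi\geq 0$, one could equivalently check the Beurling--Deny criterion in the form $\ccalE(\psi^+,\psi^-)\leq 0$ by unfolding via $\Pi$ and using the same property for $\calE$.
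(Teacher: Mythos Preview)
Your proof is correct and follows essentially the same route as the paper: both verify the Beurling--Deny criterion $\ccalE[|\psi|]\leq\ccalE[\psi]$ by using (\ref{J-positivity}) to produce an admissible competitor for $\ccalE[|\psi|]$ and then invoking the semi-Dirichlet property of $\calE$. The only cosmetic difference is that you work with the single competitor $|Pu|$ (the modulus of the minimizer), whereas the paper takes $|v|$ for an arbitrary $v$ with $Jv=Ju$ and then passes to the infimum; since the infimum is attained at $Pu$, the two arguments are equivalent.
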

\begin{proof}
We shall prove that $\ccalE$ is a semi-Dirichlet form. Let $u\in\calD$. Then by Dirichlet principle we have
\[
\ccalE[|Ju|]=\ccalE[J|u|]=\inf\{\calE[v],\ v\in\calD,\,Jv=J|u|\}.
\]
Now if $v$ is such that $Jv=J|u|$ then, by assumption (\ref{J-positivity}) we get $J|v|=|Jv|=J|u|=Jv$. Hence making use of the semi-Dirichlet property for $\calE$ we achieve
\begin{align*}
\ccalE[|Ju|]&\leq\inf\{\calE[|v|],\ v\in\calD\,Jv=Ju\}\\
&\leq \inf\{\calE[v],\ v\in\calD\,Jv=Ju\}=\ccalE[Ju],
\end{align*}
which completes the proof.
\end{proof}

\begin{prop}
Let $E_0$ be the smallest eigenvalue of $L_\Dir$. Then for every $\lam<E_0$, the semigroup $\ctl$ is p.p.
\label{Positivity-Under}
\end{prop}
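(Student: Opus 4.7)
The plan is to verify the first Beurling--Deny criterion for the form $\cel$: that $\psi\in\dom\cel$ entails $|\psi|\in\dom\cel$ with $\cel[|\psi|]\leq \cel[\psi]$. This parallels the proof of Lemma \ref{O-p.p.} (the case $\lam=0$), but with one crucial difficulty. Whereas for $\lam=0$ the positivity of $\calE$ directly furnishes the infimum representation $\ccalE[Ju]=\inf\{\calE[v]:\,v\in\dom J,\,Jv=Ju\}$ on which that proof rests, for $0<\lam<E_0$ the shifted form $\calE_\lam$ need not be positive on all of $\calD$, so no such inf-formula is available a priori. The main obstacle is therefore to re-derive this inf-formula; once in place, the Dirichlet-principle argument of Lemma \ref{O-p.p.} transcribes verbatim.

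For the first step, given $\psi=Ju$ and any $v\in\dom J$ with $Jv=\psi$, I would decompose $v=P_\lam u+w$ with $w=v-P_\lam u\in\ker J$. Since $P_\lam u\in\Har(\lam)$, one has $\calE_\lam(P_\lam u,w)=0$, hence
\[
\calE_\lam[v]=\calE_\lam[P_\lam u]+\calE_\lam[w]=\cel[\psi]+\calE_\lam[w].
\]
The assumption $\lam<E_0=\min\sigma(L_\Dir)$ guarantees $\calE_\lam[w]\geq(E_0-\lam)\|w\|^2\geq 0$ for every $w\in\ker J$, whence
\[
\cel[\psi]=\inf\{\calE_\lam[v]:\,v\in\dom J,\,Jv=\psi\},
\]
with the infimum attained precisely at $v=P_\lam u$.

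Equipped with this inf-formula, I would mimic the proof of Lemma \ref{O-p.p.}. Assumption (\ref{J-positivity}) combined with the semi-Dirichlet property of $\calE$ (which gives $|u|\in\calD$ whenever $u\in\calD$) yields $|\psi|=J|u|\in\ran J=\dom\cel$. For any competitor $v$ with $Jv=\psi$, the function $|v|$ lies in $\dom J$, satisfies $J|v|=|\psi|$ by (\ref{J-positivity}), and
\[
\calE_\lam[|v|]=\calE[|v|]-\lam\|v\|^2\leq \calE[v]-\lam\|v\|^2=\calE_\lam[v],
\]
using the Beurling--Deny inequality for $\calE$ together with $\||v|\|=\|v\|$. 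Therefore $\cel[|\psi|]\leq \calE_\lam[|v|]\leq \calE_\lam[v]$, and taking the infimum over admissible $v$ produces $\cel[|\psi|]\leq \cel[\psi]$. Hence $\cel$ is a semi-Dirichlet form and $\ctl$ is positivity preserving.
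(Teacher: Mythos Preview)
Your proof is correct, and the route differs from the paper's. The paper splits into two cases: for $\lam\leq 0$ it invokes Lemma \ref{O-p.p.} directly (since $\calE_\lam$ is then a positive semi-Dirichlet form), while for $0<\lam<E_0$ it switches to the second Beurling--Deny criterion $\cel(\psi^+,\psi^-)\leq 0$, polarizes the representation formula (Theorem \ref{decomposition1}) and checks that each of the three resulting terms is nonpositive, using Lemma \ref{Pi-positivity} for the positivity of $\Pi$ and the Laplace-transform expression $(L_\Dir-\lam)^{-1}=\int_0^\infty e^{-\lam t}e^{-tL_\Dir}\,dt$ for positivity of the resolvent. Your argument instead observes that $\lam<E_0$ forces $\calE_\lam\geq 0$ on $\ker J$, which is all that is needed to recover the plain inf-formula $\cel[\psi]=\inf\{\calE_\lam[v]:Jv=\psi\}$, after which the first Beurling--Deny criterion goes through exactly as in Lemma \ref{O-p.p.}. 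This is more elementary and unified: it avoids the representation formula, Lemma \ref{Pi-positivity}, and the resolvent positivity step altogether. The paper's approach, on the other hand, is a warm-up for Theorem \ref{Positivity-Multiple}, where the same polarized representation/Mittag--Leffler machinery becomes indispensable once $\lam>E_0$ and no inf-formula is available.
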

\begin{proof}
{\em Step 1:} $\lam\leq 0$. Then $\calE_\lam$ is a positive semi-Dirichlet form. Hence Lemma \ref{O-p.p.} applied to $\cel$ instead of $\ccalE$ yields the p.p. for $\ctl$.\\
{\em Step 2:} $0<\lam<E_0$. Here we use Beurling--Deny criterion together with the representation formula. By polarization we get
\begin{align}
\cel(\psi^+,\psi^-)&=\check{\calE}(\psi^+,\psi^-) -\lam\big(L_\Dir(L_\Dir -\lam)^{-1}\Pi\psi^+,\Pi\psi^-\big)\nonumber\\
&= \check{\calE}(\psi^+,\psi^-) -\lam(\Pi\psi^+,\Pi\psi^-) -\lam^2\big((L_\Dir -\lam)^{-1}\Pi\psi^+,\Pi\psi^-\big)
\end{align}
According to Lemma \ref{O-p.p.} the first term is negative, whereas  the second term is negative owing to Lemma \ref{Pi-positivity}. As $\lam<E_0$,  we have
\[
(L_\Dir -\lam)^{-1} = \int_0^\infty e^{-\lam t}T_tu\,dt.
\]
As $T_t$ is p.p. we conclude that $(L_\Dir -\lam)^{-1}$ is p.p. as well. Hence the third term is also negative, leading to $\cel(\psi^+,\psi^-)\leq 0$ and the proof is finished.

\end{proof}
We proceed now to analyze positivity of $\ctl$ for $\lam>E_0$. To achieve our purpose we shall utilize Mittag--Leffler expansion for $\cel$.
\begin{theo}
Let $\lam>E_0$ and $E$ be an eigenvalue of $L_\Dir$ with multiplicity $m$. Let $(v_1,\cdots,v_m)$ be an orthonormal basis for $\ker(L_\Dir-E)$. Then $\ctl,\ t>0$ is p.p. on a left (resp. right) neighborhood  of $E$ if and only if one of the following equivalent conditions is fulfilled
\begin{enumerate}
\item
\[
\sum_{k=1}^m(\Pi\psi^+,v_k)\cdot(\Pi\psi^-,v_k)\geq 0,\ (\text{resp.} \leq 0),\ \forall\,\psi\in\ran J.
\]
\item
\[
(L_\Dir P_E \Pi\psi^+,\Pi\psi^-) \geq 0,\ (\text{resp.} \leq 0),\ \forall\,\psi\in\ran J.
\]
\end{enumerate}
\label{Positivity-Multiple}
\end{theo}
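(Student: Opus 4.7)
I would apply the Beurling--Deny criterion and read off the sign of $\cel(\psi^+,\psi^-)$ from the Mittag--Leffler expansion. Because $\calE$ is a semi-Dirichlet form and $J$ commutes with the absolute value by (\ref{J-positivity}), one has $\psi^\pm \in \ran J$ whenever $\psi \in \ran J$, so $\ctl$ is p.p.\ if and only if $\cel(\psi^+,\psi^-) \leq 0$ for every $\psi \in \ran J$.

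The equivalence of conditions~1 and~2 is purely algebraic: since $P_E = \sum_{k=1}^m(\cdot,v_k)v_k$ and $L_\Dir v_k = E v_k$,
\begin{equation*}
(L_\Dir P_E \Pi\psi^+, \Pi\psi^-) = E\sum_{k=1}^m (\Pi\psi^+,v_k)(\Pi\psi^-,v_k),
\end{equation*}
and $E>0$ because $0\notin\sigma_e(L_\Dir)$ by (\ref{lambda-direct}) together with positivity of $\calE$.

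Next I would polarize the expansion (\ref{Mittag-2}) and isolate the $E_j = E$ contribution:
\begin{equation*}
\cel(\psi^+,\psi^-) = R_E(\lam,\psi) + \frac{\lam E}{\lam - E}\, g(\psi),\qquad g(\psi) := \sum_{k=1}^m (\Pi\psi^+,v_k)(\Pi\psi^-,v_k),
\end{equation*}
where $R_E(\lam,\psi)$ gathers $\ccalE(\psi^+,\psi^-)$ and all terms with $E_j\neq E$; absolute convergence in Theorem~\ref{ML} shows that $R_E(\cdot,\psi)$ extends continuously to $\lam = E$ for each fixed $\psi$. Necessity follows immediately: if $\cel(\psi^+,\psi^-)\leq 0$ on some $(E-\delta,E)$, multiplying by $\lam - E < 0$ and letting $\lam\uparrow E$ kills the $(\lam-E)R_E$ contribution while $\lam E\to E^2$, yielding $E^2 g(\psi)\geq 0$. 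The right-sided case is symmetric, with $\lam - E > 0$.

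The sufficiency direction is the hard part. When $g(\psi) > 0$ the singular term drives $\cel(\psi^+,\psi^-)\to -\infty$ as $\lam\uparrow E$, but when $g(\psi)=0$ the limit reduces to $R_E(E,\psi)$, whose sign is not obviously controlled. To cope with this I would anchor the argument at some $\lam_0<E_0$, where positivity is guaranteed by Proposition~\ref{Positivity-Under} so that $\check\calE_{\lam_0}(\psi^+,\psi^-)\leq 0$, and rewrite $\cel(\psi^+,\psi^-) - \check\calE_{\lam_0}(\psi^+,\psi^-)$ via the algebraic identity
\begin{equation*}
\lam\frac{E_j}{\lam - E_j} - \lam_0\frac{E_j}{\lam_0 - E_j} = \frac{E_j^2(\lam_0 - \lam)}{(\lam - E_j)(\lam_0 - E_j)}
\end{equation*}
inside the Mittag--Leffler series. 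A case analysis on the sign of this coefficient, according as $E_j<\lam$, $E_j=E$, or $E_j>\lam$, combined with $g(\psi)\geq 0$ to absorb the dominant $E_j=E$ contribution, should reduce sufficiency to the baseline inequality at $\lam_0$ and give $\cel(\psi^+,\psi^-)\leq 0$ on a left neighborhood of $E$. The right-neighborhood case is analogous.
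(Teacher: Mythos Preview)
Your approach via Beurling--Deny and the polarized Mittag--Leffler expansion is exactly the paper's, and your derivation of necessity and of the equivalence of conditions~1 and~2 is correct and in fact more carefully argued than the paper's own version. You are also right that the sufficiency direction hides a real difficulty: when $g(\psi)=0$ the singular term disappears and one must produce a \emph{$\psi$-uniform} left neighbourhood of $E$, which the leading-term heuristic alone does not give. The paper's proof simply asserts that the sign of the leading term decides the sign of $\cel(\psi^+,\psi^-)$ near $E$ and does not address this point at all, so on this score you have gone further than the source.

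However, your proposed repair does not close the gap. In the difference
\[
\cel(\psi^+,\psi^-)-\check\calE_{\lam_0}(\psi^+,\psi^-)=\sum_j\frac{E_j^2(\lam_0-\lam)}{(\lam-E_j)(\lam_0-E_j)}\,(\Pi\psi^+,u_j)(\Pi\psi^-,u_j),
\]
the scalar coefficients indeed carry a definite sign once the position of $E_j$ relative to $\lam$ is fixed, but for $E_j\neq E$ the factor $(\Pi\psi^+,u_j)(\Pi\psi^-,u_j)$ carries no sign information whatsoever: eigenfunctions at levels other than $E$ may change sign, so these products can be of either sign. A case split on the coefficient therefore says nothing about the sign of the individual terms, and the hypothesis $g(\psi)\geq 0$ controls only the $E_j=E$ block. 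The reduction to the baseline inequality at $\lam_0$ does not go through, and the uniformity in $\psi$ remains open. In short: your overall strategy matches the paper's, you have correctly identified a lacuna in sufficiency that the paper glosses over, but the fix you sketch fails for the reason above and a different idea would be needed to make the argument complete.
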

\begin{proof}
We shall use Beurling--Deny criterion. Let $\psi\in\ran J$. By polarization and  according to formula (\ref{ML}) we obtain
\begin{equation}
\ccalE_\lam(\psi^+,\psi^-) = \ccalE(\psi^+,\psi^-) + \lam\sum_{k=0}^\infty \frac{E_k}{\lam-E_k} (\Pi\psi^+,u_k) (\Pi\psi^-,u_k).
\end{equation}
Thereby the leading term in the expansion near $E$ is
\[
 \frac{\lam E}{\lam-E} \sum_{k=1}^m (\Pi\psi^+,v_k)\cdot(\Pi\psi^-,v_k).
\]
Accordingly, $\ccalE(\psi^+,\psi^-)$ is negative in a left (resp. right) neighborhood of $E$ if and only if
\[
\sum_{k=1}^m (\Pi\psi^+,v_k)\cdot(\Pi\psi^-,v_k)
\geq 0,\ (\text{resp.}\,\leq 0).
\]
Now the selfadjointness of $L_\Dir$ yields $\ran P_E=\ker (L_\Dir -E)$. Hence $P_E=\sum_{k=1}^m(\cdot,v_k)v_k$ and
\[
(L_\Dir P_E\Pi\psi^+,\Pi\psi^-)=E\sum_{k=1}^m(\Pi\psi^+,v_k)(\Pi\psi^-, v_k).
\]
Thus both conditions of the theorem are equivalent  and by Beurling--Deny criterion they are both equivalent to positivity to the left (resp. to the right).

\end{proof}

\begin{rk}
{\rm
The latter theorem yields the following observations for the semigroup $\ctl$ with $\lam>E_0$:
\begin{enumerate}
\item The semigroup $\ctl$ can not be simultaneously p.p. on both sides of any singularity.
\item If it is p.p. on one side then it is necessary non p.p. on the other one.
\item It might be non p.p. on both sides of some singularities.
\end{enumerate}
}
\end{rk}
Theorem \ref{Positivity-Multiple} leads immediately to the following conclusions:
\begin{coro}
Under assumptions of Theorem \ref{Positivity-Multiple}, suppose there are $\psi_1,\psi_2\in\ran J$ such that the sums
\begin{equation*}
\sum_{k=1}^m (\Pi\psi_1^+,u_k)\cdot(\Pi\psi_1^-,u_k)\ \text{and}\ \sum_{k=1}^m (\Pi\psi_2^+,u_k)\cdot(\Pi\psi_2^-,u_k),
\end{equation*}
have opposite signs. Then $\ctl$ is not p.p. in any neighborhood  of $E$.
\label{Left-Right}
\end{coro}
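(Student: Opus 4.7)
The plan is to apply Theorem \ref{Positivity-Multiple} in both directions and exploit the incompatibility of the two characterizations. Recall from that theorem that $\check{T}_t(\lam)$ is p.p. on a left neighborhood of $E$ if and only if
\[
S(\psi):=\sum_{k=1}^m (\Pi\psi^+,u_k)\cdot(\Pi\psi^-,u_k)\geq 0,\ \forall\,\psi\in\ran J,
\]
and p.p. on a right neighborhood of $E$ if and only if $S(\psi)\leq 0$ for all $\psi\in\ran J$.

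Suppose, say, $S(\psi_1)>0$ and $S(\psi_2)<0$ (the reversed case is symmetric). Then the condition $S(\psi)\geq 0$ for all $\psi\in\ran J$ fails because of $\psi_2$, so Theorem \ref{Positivity-Multiple} implies $\check{T}_t(\lam)$ fails to be p.p. on every left neighborhood of $E$. Symmetrically, the condition $S(\psi)\leq 0$ for all $\psi\in\ran J$ fails because of $\psi_1$, so $\check{T}_t(\lam)$ fails to be p.p. on every right neighborhood of $E$.

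Any neighborhood of $E$ in $\R$ contains both a one-sided left interval $(E-\ep,E)$ and a one-sided right interval $(E,E+\ep)$. Since positivity preservation fails on each of these for every $\ep>0$, we conclude that $\check{T}_t(\lam)$ is not p.p. in any neighborhood of $E$. This essentially repackages Theorem \ref{Positivity-Multiple}, so no genuine obstacle arises; the only thing to be careful about is to apply the left and right characterizations to the \emph{distinct} witnesses $\psi_1$ and $\psi_2$, rather than expecting a single $\psi$ to destroy both sides at once.
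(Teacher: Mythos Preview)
Your proof is correct and follows exactly the route the paper intends: the corollary is stated as an immediate consequence of Theorem \ref{Positivity-Multiple} with no separate proof given, and your argument simply spells out that immediate deduction. The only cosmetic point is that your last paragraph about two-sided neighborhoods is a bit redundant once you have shown failure on every left and every right neighborhood separately.
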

\begin{coro}
Assume that $E$ is simple and that the associated eigenfunction,  $u_E$ can be chosen to have constant sign. Then $\ctl$ is p.p. to the left of $E$ whereas it is non p.p. to the right of $E$.
\end{coro}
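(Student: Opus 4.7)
The plan is to derive both claims from Theorem \ref{Positivity-Multiple} specialised to the case $m=1$. Since $E$ is simple we may take the orthonormal eigenbasis to consist of a single vector, and by hypothesis (after flipping sign if necessary) we can arrange $v_1=u_E\geq 0$. The two conditions in the theorem then reduce to a sign statement on the single product $(\Pi\psi^+,u_E)(\Pi\psi^-,u_E)$ for $\psi\in\ran J$.

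For the left-hand side the argument will be immediate. Assumption (\ref{J-positivity}) gives $\psi^\pm=J(u^\pm)\in\ran J$ whenever $u\in\calD$ satisfies $Ju=\psi$, and Lemma \ref{Pi-positivity} then guarantees $\Pi\psi^+\geq 0$ and $\Pi\psi^-\geq 0$ in $\calH$. Pairing with the non-negative $u_E$ yields two non-negative scalars whose product is $\geq 0$, which is exactly condition (1) of Theorem \ref{Positivity-Multiple} in the \emph{left} form. Hence $\ctl$ is p.p. on a left neighbourhood of $E$.

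For the right-hand side I would argue by contradiction. The \emph{right} version of Theorem \ref{Positivity-Multiple} requires $(\Pi\psi^+,u_E)(\Pi\psi^-,u_E)\leq 0$ for every $\psi\in\ran J$; combined with the $\geq 0$ lower bound just established this would force the product to vanish identically in $\psi$. To defeat this I plan to exhibit a single sign-changing $\psi\in\ran J$ for which both $(\Pi\psi^+,u_E)$ and $(\Pi\psi^-,u_E)$ are strictly positive: pick $u\in\calD$ with $Ju^+$ and $Ju^-$ both non-trivial (available because $\ran J$ is dense in $\calHaux$), set $\psi=Ju$, and observe that $\Pi\psi^\pm\geq 0$ are non-zero since $J\Pi\psi^\pm=\psi^\pm\neq 0$.

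The principal obstacle I foresee is upgrading $(\Pi\psi^\pm,u_E)\geq 0$ to strict inequality; in the abstract setting this fails only if the supports of $u_E$ and of $\Pi\psi^\pm$ are essentially disjoint. I plan to rule this out by invoking the semi-Dirichlet/irreducibility context in which the corollary is applied (where a ground-state-type eigenfunction $u_E$ is strictly positive $m$-almost everywhere on the relevant set), so that any non-trivial non-negative $\Pi\psi^\pm$ pairs strictly positively with $u_E$. The resulting inequality $(\Pi\psi^+,u_E)(\Pi\psi^-,u_E)>0$ contradicts the hypothetical right-hand p.p.\ through Theorem \ref{Positivity-Multiple}, completing the proof.
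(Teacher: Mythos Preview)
Your left-side argument is exactly the paper's: specialise condition~(1) of Theorem~\ref{Positivity-Multiple} to $m=1$, use Lemma~\ref{Pi-positivity} together with the sign of $u_E$ to get $(\Pi\psi^+,u_E)(\Pi\psi^-,u_E)\geq 0$ for all $\psi$, and conclude p.p.\ on a left neighbourhood.

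For the right side the paper does not construct a witness $\psi$; it simply invokes the Remark following Theorem~\ref{Positivity-Multiple} (``p.p.\ on one side forces non-p.p.\ on the other''), treating that as an immediate by-product of the theorem. Your more explicit route --- exhibit a sign-changing $\psi$ with $(\Pi\psi^+,u_E)(\Pi\psi^-,u_E)>0$ --- is a reasonable way to make this rigorous, and you correctly isolate the only obstacle: upgrading the non-negative pairings to \emph{strict} positivity. However, your proposed fix (``invoking the semi-Dirichlet/irreducibility context \ldots\ where a ground-state-type eigenfunction $u_E$ is strictly positive $m$-a.e.'') imports hypotheses that the corollary does not make. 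In the stated abstract generality, ``constant sign'' permits $u_E$ to vanish on a set of positive $m$-measure, and nothing in Section~5 asserts irreducibility of $e^{-tL_\Dir}$ or strict positivity of $u_E$; so that step is not justified as written.

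It is worth noting that the paper's own shortcut shares this edge case: if the product $(\Pi\psi^+,u_E)(\Pi\psi^-,u_E)$ were identically zero, Theorem~\ref{Positivity-Multiple} would \emph{formally} yield p.p.\ on both sides, and the Remark offers no separate argument excluding this. So your plan is essentially the paper's reasoning unpacked, with the gap made visible rather than closed. If you want to close it within the spirit of the section, a cleaner device than irreducibility is to choose non-negative $\psi_1,\psi_2\in\ran J$ with disjoint $\mu$-supports and $(\Pi\psi_i,u_E)\neq 0$ (possible once one knows $u_E\not\perp\Har$, which is what makes $E$ a genuine singularity), and set $\psi=\psi_1-\psi_2$; but this too requires an extra word of justification that neither you nor the paper supplies.
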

By the end of this section we shall demonstrate how to use information on p.p. for $\ctl$ to decide on p.p. for the semigroup related to the trace of another form.\\
Let $Q$ be an other positive densely defined semi-Dirichlet form such that
\[
\dom Q=\dom\calE\ \text{and}\ Q_\Dir=\calE_\Dir.
\]
Set
\[
\Har(Q):\{u\in\dom J,\ Q(u,v)= 0,\ \forall\,v\in\ker J\},
\]
and assume further that $\Har=\Har(Q)$.\\
Then $\calE_\Dir$ and $Q_\Dir$ have the same spectrum. Furthermore,  for any $\lam\in\R\setminus\sigma_e(L_\Dir)$ the trace form $\check{Q}_\lam$ is well defined via Dirichlet principle, is closed and densely defined in $\calHaux$.\\
We shall give in the last section a convincing example where all these conditions are fulfilled.\\
Denote by $\check{S_t}(\lam)$ the semigroup related to $\check{Q}_\lam$.
\begin{theo}
The semigroups $\ctl$ and $\check{S_t}(\lam)$ behave likely regarding positivity preservation property.
\label{SameBehav}
\end{theo}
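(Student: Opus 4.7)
The plan is to show that every ingredient of the positivity analyses carried out in Proposition \ref{Positivity-Under} and Theorem \ref{Positivity-Multiple} is common to $\calE$ and $Q$ under the standing assumptions, so that the two semigroups inherit exactly the same p.p. pattern. First, because $\calE_\Dir = Q_\Dir$, the Dirichlet operator $L_\Dir$, its spectrum, eigenvalues $E_k$, eigenbasis $(u_k)$ and eigenprojections $P_E$ all coincide. The identity $\Har = \Har(Q)$ combined with the direct sum decomposition (\ref{directSum}) gives $\dom J = \Har(Q) \oplus \ker J$; the algebraic projection $P$ onto this common subspace is thereby purely set-theoretic and independent of the form, so by Lemma \ref{Poisson} the Poisson kernel operator $\Pi = (J|_{\Har})^{-1}$ is the same for $\calE$ and $Q$. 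Lemma \ref{Pi-positivity}, whose proof only uses $\calE$ being semi-Dirichlet, then applies equally to $Q$, so positivity of $\Pi$ is ensured in both setups.

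Next I would apply the representation formula of Theorem \ref{decomposition1} separately to $\cel$ and $\check Q_\lam$, obtaining
\[
\cel[\psi] = \ccalE[\psi] - \lam\bigl(L_\Dir(L_\Dir-\lam)^{-1}\Pi\psi,\Pi\psi\bigr), \qquad \check Q_\lam[\psi] = \check Q[\psi] - \lam\bigl(L_\Dir(L_\Dir-\lam)^{-1}\Pi\psi,\Pi\psi\bigr).
\]
The singular ($\lam$-dependent) parts are identical, and the Mittag--Leffler expansions of Theorem \ref{ML} differ only through the regular parts $\ccalE$ and $\check Q$. Lemma \ref{O-p.p.} applied to $Q$ moreover guarantees that $\check Q$ is itself a semi-Dirichlet form on $L^2(X,\mu)$, so $\check Q(\psi^+,\psi^-) \leq 0$ for every $\psi \in \ran J$, exactly like $\ccalE$.

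With these identifications the positivity statements transfer. For $\lam < E_0$ the proof of Proposition \ref{Positivity-Under} relies solely on (i) positivity of $\Pi$, (ii) the semi-Dirichlet property of the regular trace form and (iii) positivity of $e^{-tL_\Dir}$; each of (i)--(iii) holds simultaneously for $\calE$ and $Q$, so $\ctl$ is p.p. if and only if $\check S_t(\lam)$ is. For $\lam$ in a one-sided neighborhood of any singularity $E \in \sigma_e(L_\Dir)$, the iff condition of Theorem \ref{Positivity-Multiple} reads
\[
\sum_{k=1}^m (\Pi\psi^+,v_k)(\Pi\psi^-,v_k) \geq 0 \quad (\text{resp.} \leq 0), \quad \forall\,\psi \in \ran J,
\]
where $(v_1,\dots,v_m)$ is an orthonormal basis of $\ker(L_\Dir - E)$; this condition involves only $\Pi$ and the shared eigenbasis, hence is satisfied by $\cel$ exactly when it is satisfied by $\check Q_\lam$.

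The main obstacle I anticipate is reconciling the two regimes at values of $\lam$ strictly interior to a connected component of $\R \setminus \sigma_e(L_\Dir)$, where a priori the regular off-diagonal polarizations $\ccalE(\psi^+,\psi^-)$ and $\check Q(\psi^+,\psi^-)$ need not coincide. I would overcome this by combining the monotonicity of $\lam \mapsto \cel[\psi]$ from Lemma \ref{decrease} (which applies verbatim to $\check Q_\lam$) with the matching leading Laurent coefficients at the two endpoints of each component, so that a sign change in $\cel(\psi^+,\psi^-)$ along the interval must be accompanied by the same sign change in $\check Q_\lam(\psi^+,\psi^-)$; Beurling--Deny then yields simultaneous p.p. throughout the component.
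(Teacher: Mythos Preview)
Your core argument coincides with the paper's: the paper's proof is three lines, observing that $\Har=\Har(Q)$ forces the same Poisson kernel $\Pi$, hence the same singular part in the Mittag--Leffler expansion, and then invokes Theorem~\ref{Positivity-Multiple}. Your first three paragraphs reproduce exactly this, with some extra detail (Lemma~\ref{Pi-positivity}, Lemma~\ref{O-p.p.}, Proposition~\ref{Positivity-Under}) that the paper does not bother to restate. In scope the paper reads ``behave likely'' as ``the iff conditions of Theorem~\ref{Positivity-Multiple} near each singularity are identical''; it does not attempt anything for $\lam$ deep inside a component of $\R\setminus\sigma_e(L_\Dir)$.

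Your final paragraph, however, reaches beyond what the paper proves and contains a genuine gap. Lemma~\ref{decrease} gives monotonicity of $\lam\mapsto\cel[\psi]$, i.e.\ of the \emph{diagonal} quadratic form, not of the off-diagonal polarization $\lam\mapsto\cel(\psi^+,\psi^-)$. The derivative of the latter is $-\bigl(L_\Dir(L_\Dir-\lam)^{-2}\Pi\psi^+,\Pi\psi^-\bigr)$, and for $\lam>E_0$ the operator $L_\Dir(L_\Dir-\lam)^{-2}$ is not positivity preserving, so no sign is available. More seriously, even granting monotonicity, your conclusion would not follow: by the representation formula the difference
\[
\cel(\psi^+,\psi^-)-\check Q_\lam(\psi^+,\psi^-)=\ccalE(\psi^+,\psi^-)-\check Q(\psi^+,\psi^-)
\]
is a $\lam$-independent constant, so the two polarizations are vertical translates of one another. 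Matching leading Laurent coefficients at the endpoints says nothing about where each crosses zero; a nonzero shift can move the sign change to a different value of $\lam$, or create/destroy one entirely. So Beurling--Deny cannot be transferred on the interior by this route. If you want only what the paper asserts, simply drop that paragraph; if you want the stronger interior statement, a different argument is needed.
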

\begin{proof}
As $\Har=\Har(Q)$ we obtain that both forms have the same Poisson kernel operator. Hence both trace forms have the same singular parts in their respective Laurent and Mittag-Leffler expansion. Now the  assertion of the theorem follows from Theorem \ref{Positivity-Multiple}.
\end{proof}

\section{Examples}
\subsection{The singular D-to-N operator on Lipschitz domains}

In this section we shall use the elaborated theory from former sections to study D-to-N operators related to Neumann Laplacian on Lipschitz domains.\\
Let $\Om\subset\R^d$ be a nonempty  open bounded connected subset with Lipschitz boundary $\Gamma$ and $\calE$
the gradient Dirichlet form on $H^1(\Om)$:
\begin{eqnarray}
D(\calE)= H^1(\Om),\ \calE[u] = \int_\Om |\nabla u|^2\,dx,\ \forall\,u\in H^1(\Om).
\end{eqnarray}
It is well known that the quadratic form $\calE$ is closed and densely defined  in $L^2:=L^2(\Om,dx)$. Moreover, $\calE$ is a Dirichlet form, i.e.,
\[
u\in H^1(\Om)\Rightarrow u_{0,1}:=(u\vee 0)\wedge 1\in H^1(\Om)\ \text{and}\ \calE[u_{0,1}]\leq \calE[u].
\]
The positive selfadjoint operator associated to  $\calE$, which we denote by $L$ is commonly named  the Neumann Laplacian on $\Om$. As $\calE$ is a Dirichlet form its related semigroup $e^{-tL},\ t>0$ is Markovian and hence is p.p.\\
Let $dS$ be the surface measure on $\Gamma$ (the $(d-1)$-Hausdorff measure of $\Gamma$). Set $L^2(\Gamma):=L^2(\Gamma,dS)$ and let $J$ be the operator 'trace to the boundary'
\[
J:H^1(\Om)\to L^2(\Gamma),\ J\mapsto u|_\Gamma.
\]
Then  $J$ is bounded (see Proposition \ref{Compact}). Moreover it is well known that
\[
\ker J=H_0^1(\Om)\ \text{and}\ \ran J=H^{1/2}(\Gamma),
\]
and $J$ has dense range. Thus $\calE_\Dir$ is the closed quadratic from associated with the Dirichlet Laplacian on $\Om$ and is a Dirichlet form as well. Owing to boundedness  of $\Om$ its well known that $L_\Dir$ has compact resolvent with simple smallest eigenvalue $E_0>0$.\\
In this case we have
\[
\Har(\lam)=\{u\in H^1(\Om),\  -\Delta u-\lam u=0\ \text{on}\ \Om\}.
\]
According to \cite[Theorem 10.1]{Fabes}, for any $\psi\in H^{1/2}(\Gamma)$ there is a unique $u\in H^1(\Om)$ such that

\begin{eqnarray*}
\left\{\begin{gathered}
-\Delta u =0, \quad \hbox{in } \Om\\
u= \psi,~~~{\rm on}\  \Gamma
\end{gathered}
\right..
\end{eqnarray*}
All these considerations lead to the decomposition $H^1(\Om)=H^1_0(\Om)\oplus\Har$.\\
Thus all conditions are fulfilled to define $\cel$ via Dirichlet principle. In fact, let $\lam\in\R\setminus\sigma_e(L_\Dir)$ and $\psi\in H^{1/2}(\Gamma)$. Then $P_\lam u$ is the unique element from  $H^1(\Om)$ which solves  the boundary value problem
\begin{eqnarray*}
\left\{\begin{gathered}
-\Delta P_\lam u -\lambda P_\lam u=0, \quad \hbox{in } \Om,\\
P_\lam u= \psi,~~~{\rm on}\  \Gamma
\end{gathered}
\right.
\end{eqnarray*}
and
\begin{eqnarray*}
D(\cel)= H^{1/2}(\Gamma),\ \cel[\psi] = \calE_\lam[P_\lam u]=\int_\Om |\nabla P_\lam u|^2\,dx-\lam\int_\Om (P_\lam u)^2\,dx\
,\ \forall\,\psi\in H^{1/2}(\Gamma).
\end{eqnarray*}
We proceed to show that $\cel$ is lower semibounded.
\begin{lem}
There is a finite constant $c>0$ such that
\begin{eqnarray}
\int_\Om (\Pi\psi)^2\,dx\leq c\int_\Gamma \psi^2\,d\Gamma,\ \forall\,\psi\in H^{1/2}(\Gamma).
\end{eqnarray}
\label{Pi-continuity}
\end{lem}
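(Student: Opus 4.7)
I would prove this by duality, reducing the sought $L^{2}$--$L^{2}$ bound on $\Pi$ to an $L^{2}$ bound on the Dirichlet normal derivative. The first step is to test $\Pi\psi$ against an arbitrary $v\in L^{2}(\Omega)$ and set $w := L_\Dir^{-1} v \in H^{1}_{0}(\Omega) \cap \dom L_\Dir$, i.e.\ the unique weak solution of $-\Delta w = v$ in $\Omega$ with $w|_\Gamma=0$. Since $\Pi\psi\in H^{1}(\Omega)$ is harmonic and $w$ has zero trace, two applications of Green's formula --- with the two interior contributions $\int_\Omega\nabla w\cdot\nabla\Pi\psi\,dx$ and $\int_\Omega w\,\Delta\Pi\psi\,dx$ cancelling against each other --- will yield
\[
(\Pi\psi,\,v)_{L^{2}(\Omega)} \;=\; -\int_\Gamma \psi\,\partial_\nu w\,dS.
\]
Cauchy--Schwarz on $\Gamma$ together with passing to the supremum over $v$ in the unit ball of $L^{2}(\Omega)$ then reduces the lemma to the normal-derivative estimate
\[
\|\partial_\nu L_\Dir^{-1} v\|_{L^{2}(\Gamma)} \;\leq\; c\,\|v\|_{L^{2}(\Omega)},\qquad v\in L^{2}(\Omega).
\]

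This last estimate is the hard part, and the only place where the Lipschitz hypothesis genuinely enters. On a smooth domain it would be an immediate consequence of the standard $H^{2}$ elliptic regularity of $L_\Dir^{-1}$ combined with the trace theorem; on a merely Lipschitz domain, however, one has to invoke the sharp $H^{3/2}$ regularity of the Dirichlet Laplacian due to Jerison and Kenig --- in the spirit of the results already used via \cite[Theorem~10.1]{Fabes} for the solvability of the harmonic Dirichlet problem. That theorem gives $L_\Dir^{-1}\colon L^{2}(\Omega)\to H^{3/2}(\Omega)$ continuously, which is exactly enough regularity to make $\nabla w$ have a trace in $L^{2}(\Gamma)$ with a bound controlled by $\|v\|_{L^{2}(\Omega)}$, and thus closes the argument.

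Two small remarks on the execution. First, the Green's formula above must be interpreted carefully: \emph{a priori} $\partial_\nu w$ lives only in $H^{-1/2}(\Gamma)$ via the abstract Neumann trace, and the displayed identity initially holds in the duality $\langle H^{-1/2}(\Gamma), H^{1/2}(\Gamma)\rangle$; the $L^{2}$ bound on $\partial_\nu w$ then upgrades this pairing to the ordinary $L^{2}(\Gamma)$ scalar product. Second, by density of $H^{1/2}(\Gamma)$ in $L^{2}(\Gamma)$, the estimate yields that $\Pi$ extends to a bounded operator from $L^{2}(\Gamma)$ into $L^{2}(\Omega)$, which is presumably the form in which the lemma will be applied in the sequel to establish lower semiboundedness of $\cel$ via the representation formula \eqref{ForDec}.
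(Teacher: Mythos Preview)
Your argument is correct and takes a genuinely different route from the paper. The paper proceeds by an explicit single-layer representation of the harmonic extension: it writes
\[
\Pi\psi(x)=\int_\Gamma G(x-y)\,S^{-1}\psi(y)\,dS(y),\qquad x\in\Omega,
\]
with $G$ the fundamental solution of the Laplacian and $S$ the boundary single-layer operator from \cite{Fabes}, and then applies a Schur-type estimate based on $\sup_{x\in\Omega}\int_\Gamma G(x-y)\,dS<\infty$ and $\sup_{y\in\Gamma}\int_\Omega G(x-y)\,dx<\infty$, together with mapping properties of $S^{-1}$.

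Your duality approach instead identifies the adjoint of $\Pi\colon L^2(\Gamma)\to L^2(\Omega)$ as $-\partial_\nu L_\Dir^{-1}$ and reduces the question to the normal-derivative bound $\|\partial_\nu L_\Dir^{-1}v\|_{L^2(\Gamma)}\le c\|v\|_{L^2(\Omega)}$, which you obtain from the Jerison--Kenig $H^{3/2}$ regularity. This is more functional-analytic; it avoids layer potentials entirely and uses precisely the regularity package (Jerison--Kenig plus the $H^{3/2}$ Green formula from \cite{Tartar}) that the paper itself invokes just before Proposition~\ref{ML-Lipschitz}, so your proof sits comfortably inside the paper's own toolkit. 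The paper's approach, by contrast, stays within the potential-theoretic framework of \cite{Fabes} already used to solve the Dirichlet problem, and does not need to anticipate the $H^{3/2}$ theory. One minor point: to turn the qualitative statement ``$\partial_\nu w\in L^2(\Gamma)$'' into the quantitative bound you need, you should appeal to the closed graph theorem (or note that Jerison--Kenig gives a continuous map $L^2(\Omega)\to H^{3/2}(\Omega)$); this is routine but worth making explicit.
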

\begin{proof}
Let $\psi\in H^{1/2}(\Gamma)$. Let $G$ be the fundamental solution of the Laplacian on $\R^d$. According to \cite[Identity 10.5]{Fabes}, $\Pi\psi$ is given by
\begin{equation}
\Pi\psi(x) = \int_\Gamma G(x-y)S^{-1}(\psi(y))\,dS,\ x\in\Om,
\end{equation}
where the operator $S$ is as defined in \cite[p.10]{Fabes}.\\
A routine computation leads to
\[
c_1:=\sup_{x\in\Om}\int_\Gamma G(x-y)\,dS<\infty ,\  c_2:=\sup_{y\in\Gamma}\int_\Om G(x-y)\,dx<\infty.
\]
Thus by H\"older inequality we obtain
\[
\int_\Om (\Pi\psi)^2\,dx\leq c_1 c_2\int_\Gamma (S^{-1}(\psi(y)))^2\,dS.
\]
According to \cite[Theorem 8.1]{Fabes}, the operator $S^{-1}$ operates on the whole space $H^{1/2}(\Gamma)$. In particular, $\int_\Gamma (S^{-1}(\psi(y)))^2\,dS<\infty$, which completes the proof.
\end{proof}
\begin{lem}
Let $\lam\in\R\setminus\sigma_e(L_\Dir)$. Then there is a finite constant $c=c(\lam)>0$ such that
\[
\cel[\psi]\geq -c\int_\Gamma \psi^2\,dS,\ \forall\,\psi\in H^{1/2}(\Gamma).
\]
\label{lsb-Lip}
\end{lem}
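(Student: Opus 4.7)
The plan is to exploit the representation formula (\ref{ForDec}) from Theorem \ref{decomposition1}, combined with the $L^2$-continuity of the Poisson operator just proved in Lemma \ref{Pi-continuity}. Since $\calE$ is positive, its trace form satisfies $\ccalE[\psi]\geq 0$ for every $\psi\in H^{1/2}(\Gamma)$, so the only possibly negative contribution to $\cel[\psi]$ is the singular term $-\lam\bigl(L_\Dir(L_\Dir-\lam)^{-1}\Pi\psi,\Pi\psi\bigr)_{L^2(\Om)}$.

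First I would observe that, since $\lam\notin\sigma_e(L_\Dir)=\sigma(L_\Dir)$, the operator
\[
L_\Dir(L_\Dir-\lam)^{-1}=I+\lam(L_\Dir-\lam)^{-1}
\]
is bounded and selfadjoint on $L^2(\Om)$, with norm $M(\lam):=\|L_\Dir(L_\Dir-\lam)^{-1}\|<\infty$ (by the spectral theorem, $M(\lam)=\sup_{E\in\sigma(L_\Dir)}|E/(E-\lam)|$, which is finite because $\lam$ is at positive distance from the discrete spectrum). Applying Cauchy--Schwarz, I get
\[
\bigl|\lam\bigl(L_\Dir(L_\Dir-\lam)^{-1}\Pi\psi,\Pi\psi\bigr)\bigr|\leq |\lam|\,M(\lam)\,\|\Pi\psi\|_{L^2(\Om)}^{2}.
\]

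Next I would invoke Lemma \ref{Pi-continuity} to bound $\|\Pi\psi\|_{L^2(\Om)}^2\leq c\int_\Gamma\psi^2\,dS$. Combining everything and using the representation formula,
\[
\cel[\psi]=\ccalE[\psi]-\lam\bigl(L_\Dir(L_\Dir-\lam)^{-1}\Pi\psi,\Pi\psi\bigr)\geq -|\lam|\,M(\lam)\,c\int_\Gamma\psi^2\,dS,
\]
which is the desired lower bound with $c(\lam):=|\lam|\,M(\lam)\,c$. (If $\lam\leq 0$, one can even drop the error term, since then $-\lam\,L_\Dir(L_\Dir-\lam)^{-1}$ is a positive operator by the spectral theorem, and $\cel[\psi]\geq\ccalE[\psi]\geq 0$.)

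I do not expect a genuine obstacle here: the two essential inputs, namely the representation formula (\ref{ForDec}) and the $L^2(\Om)$-bound for $\Pi$, are already established. The only point that requires a small justification is the uniform bound $M(\lam)<\infty$, but this is immediate since $L_\Dir$ has compact resolvent with a discrete spectrum that stays away from $\lam$, so functional calculus furnishes the estimate.
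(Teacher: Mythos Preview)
Your proposal is correct and follows essentially the same route as the paper: apply the representation formula (\ref{ForDec}), drop the nonnegative term $\ccalE[\psi]$, bound the singular term by $|\lam|\,\|L_\Dir(L_\Dir-\lam)^{-1}\|\,\|\Pi\psi\|_{L^2(\Om)}^2$, and finish with Lemma~\ref{Pi-continuity}. Your treatment of the case $\lam\leq 0$ and your justification of $M(\lam)<\infty$ are slightly more explicit than the paper's, but the argument is the same.
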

\begin{proof}
According to the representation formula from  Theorem \ref{decomposition1} we have
\[
\cel[\psi] =\ccalE[\psi] -\lam(L_\Dir(L_\Dir - \lam)^{-1}\Pi\psi,\Pi\psi)_{L^2(\Om)},\ \forall\,\psi\in H^{1/2}(\Gamma).
\]
Hence the inequality of the lemma is automatically  satisfied for $\lam\leq 0$.\\
Let $\lam>0$. Let us quote that $\ccalE\geq 0$. Therefore,  owing to the representation formula together with Lemma \ref{Pi-continuity} and Cauchy-Schwartz inequality we obtain
\begin{align}
\cel[\psi]\geq -\lam\|L_\Dir(L_\Dir-\lam)^{-1}\| \|\Pi\psi\|^2_{L^2(\Om)}\geq -c\|\psi\|^2_{L^2(\Gamma)},\ \forall\,\psi\in H^{1/2}(\Gamma),
\end{align}
which was to be proved.
\end{proof}
\begin{prop}
The form $\cel$ is closed.
\end{prop}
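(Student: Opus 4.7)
The plan is to split the proof into two parts: first establish that the unperturbed trace form $\ccalE=\ccalE_0$ is closed, then use the representation formula of Theorem \ref{decomposition1} to show $\cel$ is obtained from $\ccalE$ by a form perturbation that is bounded on $L^2(\Gamma)$, which preserves closedness.

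For the closedness of $\ccalE$, I would apply Theorem \ref{closed-Positive} and verify that $(\Har,\calE^J)$ is a Hilbert space, where $\calE^J[u]=\int_\Om|\nabla u|^2\,dx + \int_\Gamma u^2\,dS$ and $\Har$ is the space of $H^1$ harmonic functions. Two facts are required. First, $\Har$ is closed in $H^1(\Om)$: a weak $H^1$-limit of functions which are harmonic in the distributional sense is still distributionally harmonic. Second, $\calE^J$ is equivalent to the $H^1$-norm on $\Har$. The trace theorem gives one direction $\calE^J[u]\leq C\|u\|_{H^1(\Om)}^2$. For the reverse, I would argue by contradiction: if the inequality fails, pick $u_n\in\Har$ with $\|u_n\|_{L^2(\Om)}=1$ and $\calE^J[u_n]\to 0$; then $(u_n)$ is bounded in $H^1(\Om)$, so by the Rellich--Kondrachov theorem it has a subsequence converging strongly in $L^2(\Om)$ and (by compactness of the trace for Lipschitz domains) with traces converging in $L^2(\Gamma)$, to some $u\in H^1(\Om)$ with $\nabla u=0$, $u|_\Gamma=0$, and $\|u\|_{L^2(\Om)}=1$; connectedness of $\Om$ forces $u$ to be constant and hence zero, which is a contradiction. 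This gives norm equivalence on $\Har$, and since $\Har$ is closed in the Hilbert space $H^1(\Om)$, $(\Har,\calE^J)$ is complete.

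With $\ccalE$ closed, I turn to $\cel$. By Theorem \ref{decomposition1},
\[
\cel[\psi]=\ccalE[\psi] + q_\lambda[\psi],\qquad q_\lambda[\psi]:=-\lam\bigl(L_\Dir(L_\Dir-\lam)^{-1}\Pi\psi,\Pi\psi\bigr)_{L^2(\Om)}.
\]
By Lemma \ref{Pi-continuity}, $\Pi$ extends to a bounded operator $L^2(\Gamma)\to L^2(\Om)$, and for $\lam\in\R\setminus\sigma_e(L_\Dir)$ the operator $L_\Dir(L_\Dir-\lam)^{-1}$ is bounded on $L^2(\Om)$; combining these with Cauchy--Schwarz yields $|q_\lam[\psi]|\leq C_\lam\|\psi\|^2_{L^2(\Gamma)}$ for all $\psi\in H^{1/2}(\Gamma)$. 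Hence $q_\lam$ is a bounded symmetric form on the whole space $L^2(\Gamma)$, and adding it to the closed form $\ccalE$ produces a closed form with the same domain $H^{1/2}(\Gamma)$ (bounded form perturbations preserve closedness, since the perturbed and unperturbed graph norms are equivalent once one shifts by a large multiple of $\|\cdot\|^2_{L^2(\Gamma)}$).

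The main obstacle is the norm-equivalence step in the second paragraph: showing $\|u\|_{L^2(\Om)}\leq C(\|\nabla u\|_{L^2(\Om)}+\|u\|_{L^2(\Gamma)})$ for $u\in\Har$. Everything else is either a direct citation (trace theorem, representation formula, Lemma \ref{Pi-continuity}) or a straightforward manipulation; the compactness-plus-contradiction argument is the only place where the Lipschitz regularity and boundedness of $\Om$ really enter.
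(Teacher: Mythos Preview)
Your proof is correct, and both steps take a somewhat different route from the paper's.

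For $\lambda=0$, the paper also invokes Theorem \ref{closed-Positive}, but instead of your compactness-plus-contradiction Poincar\'e argument it simply applies Lemma \ref{Pi-continuity}: since $u_n\in\Har$ means $u_n=\Pi Ju_n$, that lemma gives $\|u_n-u_m\|_{L^2(\Om)}\leq c\|Ju_n-Ju_m\|_{L^2(\Gamma)}$, so a $\calE^J$-Cauchy sequence is immediately $H^1(\Om)$-Cauchy. In other words, the norm equivalence you identify as the ``main obstacle'' is already contained in Lemma \ref{Pi-continuity}; your Rellich--Kondrachov argument reproves it by softer means (avoiding the layer-potential machinery behind that lemma), which is a legitimate trade-off.

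For general $\lambda$, the paper works inside $\Har(\lambda)$ and checks completeness of $(\Har(\lambda),\calE_\lambda^{(1+c)J})$ directly via Theorem \ref{D-to-N-general}, manipulating Cauchy sequences through the representation formula. Your route is cleaner: the representation formula exhibits $\cel$ as $\ccalE$ plus a symmetric form bounded on $L^2(\Gamma)$ (again by Lemma \ref{Pi-continuity}), and a bounded perturbation of a closed form is closed. This bypasses the need to work with $\Har(\lambda)$ at all.
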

\begin{proof}
{\em Step 1: $\lam = 0$}. We shall use Theorem \ref{closed-Positive}. Let $(u_n)\subset\Har$ be $\calE^{J}$-Cauchy. As $\calE$ is positive we get $\|Ju_n-Ju_m\|_{L^2(\Gamma)}\to 0$ and hence $\calE[u_n-u_m]\to 0$. Recalling that $\Pi Ju_n=u_n$, making use of Lemma (\ref{Pi-continuity}) we achieve $\|u_n-u_m\|_{L^2(\Om)}\to 0$. Thus there is $u\in H^1(\Om)$ such that $u_n\to u$ in $H^1(\Om)$. Moreover, for any $v\in C_c^\infty(\Om)$ we get
\[
\calE(u_n,v)\to \calE(u,v).
\]
As $(u_n)\subset\Har$ we obtain that $u\in\Har$ as well and then $(\Har,\calE^J)$ is a Hilbert space. According to Theorem \ref{closed-Positive},  $\ccalE$ is closed.\\
{\em Stem 2: General $\lam$}. We use the representation formula together with Theorem \ref{D-to-N-general}. Let $(u_n)\subset \Har(\lam)$ be $\calE_\lam^{(1+cJ)}$-Cauchy. Then  $(Ju_n)$ is a Cauchy sequence in $L^2(\Gamma)$ and $\calE_\lam[u_n-u_m]\to 0$. By Lemma \ref{Pi-continuity} we get that $\Pi Ju_n=Pu_n$ is a Cauchy sequence in $L^2(\Om)$. Now the representation formula yields
\[
\calE_\lam[u_n-u_m]=\cel[Ju_n-Ju_m]=\ccalE[Ju_n - Ju_m] -\lam(L_\Dir(L_\Dir-\lam)^{-1}\Pi J(u_n - u_m),\Pi J(u_n - u_m)).
\]
Hence  $(Ju_n)$ is $\ccalE_1$-Cauchy. By the first step, there is $u\in\Har$ such that
\[
\calE[Pu_n - u] + \|Ju_n - Ju\|^2_{L^2(\Gamma)}\to 0.
\]
Additional use of the representation formula together with Lemma \ref{Pi-continuity}  lead to
\begin{align}
\calE_\lam[u_n - u_\lam]&=\cel[Ju_n - Ju]=\calE[Pu_n - u]\nonumber \\
&-\lam(L_\Dir(L_\Dir-\lam)^{-1}\Pi J(u_n - u),\Pi J(u_n - u))\to 0.
\end{align}
As $u_\lam\in\Har(\lam)$ and $Ju_\lam = Ju$, we get $\calE_\lam[u_n - u_\lam] + (1+c)\|Ju_n - Ju_\lam\|^2_{L^2(\Gamma)}\to 0$. Thereby $(\Har(\lam),\calE^{(1+ cJ)})$ is a Hilbert space and $\cel$ is closed.
\end{proof}
Here $\check L_\lam$ is the D-to-N operator with respect to the boundary $\Gamma$.\\
We close this subsection with a compactness result which was already proved in \cite[Theorem 3.1]{Arendt-Mazzeo}. Here we give a new proof.
\begin{prop}
It holds
\begin{enumerate}
\item The operator $J$ is compact.
\item For every $\lam\in\R\setminus\sigma_e(L_\Dir)$ the operator $\check{L}_\lam$ has compact resolvent.
\end{enumerate}
\label{Compact}
\end{prop}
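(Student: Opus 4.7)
I would invoke classical trace theory on bounded Lipschitz domains: the trace operator $J$ maps $H^1(\Om)$ boundedly into $H^{1/2}(\Gamma)$, while the embedding $H^{1/2}(\Gamma) \hookrightarrow L^2(\Gamma)$ is compact by the Rellich--Kondrachov theorem on the compact $(d-1)$-dimensional Lipschitz manifold $\Gamma$. Factoring $J$ as the composition of these two maps yields compactness of $J\colon H^1(\Om)\to L^2(\Gamma)$.

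\textbf{Plan for Part (2).} My strategy is to prove that the form domain $\dom\cel = H^{1/2}(\Gamma)$, equipped with its form norm, embeds compactly into $L^2(\Gamma)$; by a standard characterization this is equivalent to $\check{L}_\lam$ having compact resolvent. I first reduce to the case $\lam=0$: since each $\cel$ is closed on $H^{1/2}(\Gamma)$ by the preceding proposition, each of the form norms
\[
\psi \longmapsto \bigl(\cel[\psi] + (1+c_\lam)\|\psi\|^2_{L^2(\Gamma)}\bigr)^{1/2}
\]
is a complete Hilbert norm on the same underlying space, and each dominates $\|\cdot\|_{L^2(\Gamma)}$. A closed-graph argument (any common $L^2(\Gamma)$-limit of two form-norm convergent sequences coincides) then shows that all these norms are pairwise equivalent, so it suffices to treat $\lam=0$.

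For $\lam=0$, using $\ccalE^J[\psi] = \calE[\Pi\psi] + \|\psi\|^2_{L^2(\Gamma)}$ together with Lemma \ref{Pi-continuity} applied to $\Pi\psi$, I would derive
\[
\|\Pi\psi\|_{H^1(\Om)}^2 \leq (1+c)\,\ccalE^J[\psi].
\]
Thus any sequence $(\psi_n)$ bounded in the $\ccalE^J$-norm produces $(\Pi\psi_n)$ bounded in $H^1(\Om)$; applying part (1) to $\psi_n = J\Pi\psi_n$ extracts a subsequence convergent in $L^2(\Gamma)$. This delivers the required compact embedding.

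\textbf{Main obstacle.} The non-routine point is the equivalence of the various form norms on $H^{1/2}(\Gamma)$ as $\lam$ varies across $\R\setminus\sigma_e(L_\Dir)$. It rests on combining the closedness statement from the preceding proposition (which guarantees each form norm is complete) with the closed graph theorem, via the common continuous injection into $L^2(\Gamma)$ provided by Lemma \ref{lsb-Lip}. Once this reduction is secured, the remainder is a direct transfer of the boundary compactness problem to the interior using the Poisson operator $\Pi$ and the compactness of $J$ from part (1).
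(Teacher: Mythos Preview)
Your proposal is correct but differs from the paper's route in both parts.

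For Part~(1), the paper does not factor through $H^{1/2}(\Gamma)$ and Rellich--Kondrachov. Instead it shows that $dS$ is a $(d-1)$-measure, invokes a mixed Sobolev inequality of the form
\[
\Bigl(\int_\Gamma |u|^p\,dS\Bigr)^{2/p}\leq c\,\|u\|_{H^1(\R^d)}^2
\]
for some $p>2$, transfers it to $H^1(\Om)$ via Stein's extension, and then cites a general criterion (from \cite{Benamor07}) that such an inequality with $p>2$ forces compactness of $J$. Your argument is shorter and more classical; the paper's buys the explicit Sobolev inequality~(\ref{Sob-Mix}), which is reused later to prove ultracontractivity of $\ctl$ for $\lam<0$.

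For Part~(2), the overall scheme is the same---show the form-norm embedding into $L^2(\Gamma)$ is compact---but the mechanics differ. For $\lam=0$ the paper simply cites \cite[Theorem~2.10]{BBST} (compactness of $J$ $\Rightarrow$ compactness of the resolvent of $\check L$); your direct argument via $\psi_n=J\Pi\psi_n$ is essentially an unpacking of that citation. For general $\lam$, the paper does not argue abstract norm equivalence via the closed graph theorem; it uses the representation formula~(\ref{ForDec}) together with Lemma~\ref{Pi-continuity} to show concretely that a bound on $\cel[\psi_k]+(1+c)\|\psi_k\|_{L^2(\Gamma)}^2$ forces a bound on $\ccalE[\psi_k]+\|\psi_k\|_{L^2(\Gamma)}^2$, and then applies the $\lam=0$ case. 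Your closed-graph reduction is cleaner and more portable; the paper's approach is more hands-on but stays closer to the analytic structure already developed.
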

\begin{proof}
According to \cite[Example 3, p.30]{Jonsson-Wallin} the measure $dS$ is a $(d-1)$-measure, i.e.: for some $c_1, c_2$,
\[
c_1 r^{d-1}\leq \int_{B(x,r)} \,dS\leq c_2r^{d-1},\ \forall\,x\in\Gamma,\ 0<r\leq 1.
\]
By \cite[Lemma 6.1]{Benamor07} there is finite constant $c$ such that for $d\geq 3,\ 2< p\leq \frac{2(d-2)}{d-1}$ and for $d=2,\ p\geq 2$ the following inequality is true
\begin{equation}
(\int_\Gamma |u|^p\,dS)^{2/p}\leq c(\int_{\R^d} |\nabla u|^2\,dx + \int_{\R^d} u^2\,dx),\ \forall\,u\in H^1(\R^d).
\label{Sob-Mix}
\end{equation}
On the other hand, according to \cite[Theorem 5, p. 181]{Stein} there is bounded linear extension operator for $H^1(\Om)$ into $H^1(\R^d)$. Thus the latter inequality holds on $H^1(\Om)$ which in turn, according to \cite[Theorem 7.1]{Benamor07} yields compactness of $J$.\\
By \cite[Theorem 2.10]{BBST} compactness of $J$ yields compactness of the resolvent of $\check L$.\\
Let $(\psi_k)\subset\ran J$ be such that $\sup_k (\cel[\psi_k] + (1+c)\|\psi_k\|_{L^2(\Gamma)}^2)<\infty$, $c=c(\lam)$ is a lower bound for $\cel$. By the latter formula together with inequality (\ref{Pi-continuity}) we conclude that $\sup_k(\ccalE[\psi_k] +\|\psi_k\|_{L^2(\Gamma)}^2)<\infty$. As $(\check L +1)^{-1}$ is compact there is a subsequence $(\psi_{k_j})$ and $\psi\in L^2(\Gamma)$ such that $\psi_{k_j}\to\psi$ in $L^2(\Gamma)$. This means that the embedding $(\ran J, \cel +1+c)\to L^2(\Gamma)$ is compact which is in turn equivalent to compactness of $(\check{L}_\lam +1+c)^{-1}$.

\end{proof}
\subsection{Asymptotic and positivity}
Regarding positivity property for $\ctl$ the problem is completely solved for the unit disc. Whereas for bounded Lipschitz domains it is proved in \cite{Arendt-Mazzeo} that $\ctl$ is p.p. for $\lam<E_0$.\\ Having the theoretical results from the former sections in hands we shall show that the latter property holds in general.
\begin{prop}
The following assertions are true.
\begin{enumerate}
\item For every $\lam\leq 0$, the semigroup $\ctl$ is sub-Markovian, i.e. $\ctl 1\leq 1$ for any $t>0$.
\item For every $\lam< 0$, the semigroup $\ctl$ is ultracontractive:
\[
\ctl : L^2(\Gamma)\to L^\infty(\Gamma)\ \text{is bounded},\ \forall\,t>0.
\]
\item For every $\lam< E_0$, the semigroup $\ctl$ is p.p.
\end{enumerate}
\end{prop}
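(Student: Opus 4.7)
The plan is to treat the three assertions separately. Assertion (3) is immediate: since $\calE$ is a Dirichlet form (hence semi-Dirichlet and positive), the hypotheses (\ref{lambda-direct}), (\ref{lambda-lsb}) and (\ref{lambda-closed}) have already been verified on the Lipschitz domain $\Om$ in the previous subsection, and $E_0$ is precisely the bottom of the Dirichlet spectrum; thus Proposition \ref{Positivity-Under} applies directly and yields the positivity preservation of $\ctl$ for every $\lam<E_0$.

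For assertion (1), I will adapt the argument of Lemma \ref{O-p.p.} from the unit contraction $|u|$ to the full normal contraction $u_{0,1}:=(u\vee 0)\wedge 1$. The key observation is that for $\lam\leq 0$ the shifted form $\calE_\lam$ remains a Dirichlet form on $H^1(\Om)$: indeed $\calE[u_{0,1}]\leq\calE[u]$ because $\calE$ is Dirichlet, while $\|u_{0,1}\|_{L^2(\Om)}\leq\|u\|_{L^2(\Om)}$ combined with $-\lam\geq 0$ preserves the inequality at the level of $\calE_\lam$. The boundary trace obeys $J(u_{0,1})=(Ju)_{0,1}$, a standard property on Lipschitz domains. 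Given $\psi\in H^{1/2}(\Gamma)$ and $u\in H^1(\Om)$ with $Ju=\psi$, the function $u_{0,1}\in H^1(\Om)$ satisfies $Ju_{0,1}=\psi_{0,1}$; since positivity of $\calE_\lam$ legitimises, via Theorem \ref{InfMin}, the Dirichlet-principle representation $\cel[\psi]=\inf\{\calE_\lam[v]\colon v\in\dom J,\,Jv=\psi\}$, one gets $\cel[\psi_{0,1}]\leq\calE_\lam[u_{0,1}]\leq\calE_\lam[u]$; passing to the infimum over $u$ yields $\cel[\psi_{0,1}]\leq\cel[\psi]$, which is the Beurling--Deny unit-contraction property equivalent to sub-Markovianity of $\ctl$.

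For assertion (2), I will combine (1) with a boundary Sobolev inequality. The extension theorem for $H^1(\Om)\to H^1(\R^d)$ transfers (\ref{Sob-Mix}) to $\Om$, giving $\|Ju\|_{L^p(\Gamma)}^2\leq c(\calE[u]+\|u\|^2_{L^2(\Om)})$ for all $u\in H^1(\Om)$, with $p>2$ in the range of (\ref{Sob-Mix}). Specialising to $u=\Pi\psi$, using $\calE[\Pi\psi]=\ccalE[\psi]$, and bounding $\|\Pi\psi\|^2_{L^2(\Om)}$ by $\|\psi\|^2_{L^2(\Gamma)}$ via Lemma \ref{Pi-continuity}, I obtain
\[
\|\psi\|_{L^p(\Gamma)}^2\leq c'\bigl(\ccalE[\psi]+\|\psi\|^2_{L^2(\Gamma)}\bigr),\ \forall\,\psi\in H^{1/2}(\Gamma).
\]
The representation formula (\ref{ForDec}), together with positivity of $L_\Dir(L_\Dir-\lam)^{-1}$ for $\lam<0$, ensures $\cel[\psi]\geq\ccalE[\psi]$, so the same Sobolev inequality persists with $\ccalE$ replaced by $\cel$. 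Coupled with the sub-Markovianity of $\ctl$ established in (1), Varopoulos's theorem delivers an on-diagonal heat-kernel bound of the form $\|\ctl\|_{L^1\to L^\infty}\leq Ct^{-\alpha}e^{t}$, and in particular the boundedness of $\ctl\colon L^2(\Gamma)\to L^\infty(\Gamma)$ for every $t>0$. The main technical hurdle is precisely in this last step: one must justify carefully the passage from the bulk inequality (\ref{Sob-Mix}) to an inequality intrinsic to the trace form $\cel$, and invoke the version of Varopoulos's theorem that accommodates the lower-order correction $\|\psi\|^2_{L^2(\Gamma)}$ so that ultracontractivity survives on all bounded time intervals.
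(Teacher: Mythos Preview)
Your proof is correct and follows essentially the same lines as the paper's: assertion (3) via Proposition \ref{Positivity-Under}, assertion (1) via the Dirichlet property of $\calE_\lam$ for $\lam\leq 0$ together with the Dirichlet-principle infimum, and assertion (2) via the boundary Sobolev inequality (\ref{Sob-Mix}) combined with sub-Markovianity and the standard Varopoulos/Davies machinery. The only streamlining the paper makes is in (2): since for $\lam<0$ the form $\calE_\lam$ is equivalent to the full $H^1(\Om)$ scalar product, the transferred inequality (\ref{Sob-Mix}) reads $\|Ju\|_{L^p(\Gamma)}^2\leq c\,\calE_\lam[u]$ directly, and taking the infimum over $u$ with $Ju=\psi$ gives the homogeneous Sobolev inequality $\|\psi\|_{L^p(\Gamma)}^2\leq c\,\cel[\psi]$ without the detour through Lemma \ref{Pi-continuity} or the need to handle a lower-order $\|\psi\|_{L^2(\Gamma)}^2$ term.
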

\begin{proof}
As for $\lam\leq 0$ the form $\calE_\lam$ is a Dirichlet form, the first assertion follows from \cite{BBST} and the fact that Dirichlet forms have sub-Markovian semigroups.\\
Observe that for $\lam<0$ the scalar products $\calE_{\lam}$ are equivalent on $H^1(\Om)$. Thus, using inequality (\ref{Sob-Mix}) together with Dirichlet principle we obtain a Sobolev type inequality: for some finite constant $c=c(\lam)$ we have
\begin{equation}
(\int_\Gamma |\psi|^p\,dS)^{2/p}\leq c \cel[\psi], \forall\,\psi\in H^{1/2}(\Gamma).
\label{Sob}
\end{equation}
It is well known that (see \cite[p.75]{Davies-book}) Sobolev type inequality with $p>2$ together with Dirichlet property for $\cel$ lead to ultracontractivity.\\
The third assertion follows from Lemma \ref{O-p.p.}.
\end{proof}
Now we proceed to establish necessary and sufficient conditions for p.p. near Dirichlet eigenvalues. These conditions will be enlightened by the asymptotic of $\ccalE_\lam$ in this special situation. In particular we shall show that near Dirichlet eigenvalues positivity depends solely on the behavior of either the Dirichlet eigenfunctions or their normal derivatives.\\
Let us first write the representation formula of $\ccalE_z$ for this particular case. To that end,  we denote by $G_\Om$ the Green kernel of the Dirichlet Laplacian on $\Om$ and
\begin{eqnarray*}
 K:=L^2\to L^2,\ Ku = \int_\Om G_\Om(\cdot,y)u(y)\,dy.
\end{eqnarray*}
Let $z\in\R\setminus\sigma_e(L_\Dir)$ and $\psi\in H^{1/2}$ be given. Consider $v,\ w_z$ solutions of
\begin{eqnarray}
\left\{\begin{gathered}
-\Delta v =0, \quad \hbox{in } \Om,\\
v= \psi,~~~{\rm on}\  \Gamma
\end{gathered}
\right.
\end{eqnarray}
and
\begin{eqnarray}
\left\{\begin{gathered}
-\Delta w_z -z w_z=zv, \quad \hbox{in } \Om,\\
w_z= 0,~~~{\rm on}\  \Gamma
\end{gathered}
\right.
\end{eqnarray}
Then
\begin{eqnarray}
v=\Pi\psi,\ w_z -z Kw_z= z Kv,\ {\rm and}\  P_z u = v + w_z.
\end{eqnarray}
Since $z\notin\sigma_e(-\Delta_\Om)$, the operator $(1-z K)$ is invertible and
\begin{eqnarray}
w_z = z(1-z K)^{-1}Kv  = z K(1-z K)^{-1}v.
\end{eqnarray}
Thus in this situation  the representation formula takes the form
\begin{equation}
\check\calE_z[\psi]= \check{\calE}[\psi] - z\int_\Om \Pi\psi(1-z K)^{-1}\Pi\psi\,dx.
\label{decomposition-Lipschitz}
\end{equation}
For bounded Lipschitz domains, we shall show that the Mittag--Leffler expansion for the trace form has a simpler expression involving the trace form at $z=0$, the eigenvalues and the normal derivatives of the eigenfunctions of the Dirichlet Laplacian.\\
Henceforth, we designate by $\nu$ the outward normal unit vector on $\Gamma$.\\
We recall that according to \cite[Theorems 1.1-1.3]{Jerison-Kenig}, if $u\in H^1(\Om)$ and $\Delta u\in L^2(\Om)$ then $u\in H^{3/2}(\Om)$. By \cite[Theorem 1, p.8]{Jonsson-Wallin} we obtain $u|_\Gamma\in H^1(\Gamma)$. Thus the uniqueness part of  \cite[Theorem 10.1]{Fabes} leads to $\frac{\partial u}{\partial\nu}|_\Gamma\in L^2(\Gamma)$.\\
Moreover, owing to \cite[Lemma 14.4]{Tartar} the following version of Green's formula occurs
\[
\int_\Om (-\Delta u) v\,dx= \int_\Om \nabla u\nabla v\,dx  - \int_\Gamma \frac{\partial u}{\partial\nu} v\,dS,\ \forall\,u\in H^{3/2}(\Om),\ v\in H^{1}(\Om).
\]
\begin{prop}
Let  $z\in\C\setminus\sigma_e(L_\Dir)$. Then for any $\psi\in H^{1/2}(\Gamma)$ it holds
\[
\ccalE_z[\psi] = \ccalE[\psi]  + \sum_{k=0}^\infty \frac{z}{E_k(z-E_k)}\big(\int_\Gamma \frac{\partial u_k}{\partial\nu}\psi\,dS)^2.
\]
\label{ML-Lipschitz}
\end{prop}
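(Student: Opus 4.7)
The plan is to specialize the abstract Mittag--Leffler expansion (Theorem~\ref{ML}) to the Lipschitz setting, and convert each coefficient $(\Pi\psi,u_k)$ into a boundary integral of the normal derivative of $u_k$ via Green's formula. The required identity to establish is
\[
E_k\,(\Pi\psi,u_k)_{L^2(\Om)} = -\int_\Gamma \frac{\partial u_k}{\partial\nu}\,\psi\,dS, \qquad k\geq 0,
\]
because once this is in hand one has $E_k^2\,|(\Pi\psi,u_k)|^2 = \bigl(\int_\Gamma \frac{\partial u_k}{\partial\nu}\psi\,dS\bigr)^2$, and substitution into
\[
\ccalE_z[\psi] = \ccalE[\psi] + z\sum_{k=0}^\infty \frac{E_k}{z-E_k}\,|(\Pi\psi,u_k)|^2
\]
immediately produces the coefficient $\frac{z}{E_k(z-E_k)}$ claimed in the proposition.

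First I would justify the use of Green's formula. Each Dirichlet eigenfunction $u_k$ satisfies $u_k\in H^1_0(\Om)$ with $-\Delta u_k = E_k u_k \in L^2(\Om)$, so by the Jerison--Kenig regularity result quoted just above the proposition, $u_k\in H^{3/2}(\Om)$, and consequently $u_k|_\Gamma\in H^1(\Gamma)$ and $\tfrac{\partial u_k}{\partial\nu}\in L^2(\Gamma)$. Hence the version of Green's formula from \cite[Lemma 14.4]{Tartar} is applicable to the pair $(u_k,\Pi\psi)$ with $u_k\in H^{3/2}(\Om)$ and $\Pi\psi\in H^1(\Om)$.

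Next I would apply Green's formula twice. On one hand, since $\Pi\psi$ is harmonic and $u_k$ vanishes on $\Gamma$, the formula gives $\int_\Om \nabla u_k\cdot\nabla\Pi\psi\,dx = 0$. On the other hand, using $u_k\in H^{3/2}(\Om)$ and $v=\Pi\psi\in H^1(\Om)$, the same formula yields
\[
E_k\,(\Pi\psi,u_k)_{L^2(\Om)} = \int_\Om(-\Delta u_k)\,\Pi\psi\,dx
= \int_\Om \nabla u_k\cdot\nabla\Pi\psi\,dx - \int_\Gamma \frac{\partial u_k}{\partial\nu}\,\Pi\psi\,dS.
\]
Combining the two, and using $\Pi\psi|_\Gamma = \psi$, produces the displayed identity above.

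Finally, inserting $|(\Pi\psi,u_k)|^2 = E_k^{-2}\bigl(\int_\Gamma \tfrac{\partial u_k}{\partial\nu}\psi\,dS\bigr)^2$ term by term into the absolutely convergent series of Theorem~\ref{ML} gives the stated expansion, and the series in the new form inherits absolute convergence from the original one. The only potential obstacle is ensuring that the boundary integral $\int_\Gamma \tfrac{\partial u_k}{\partial\nu}\,\Pi\psi\,dS$ is well defined with $\Pi\psi|_\Gamma = \psi \in H^{1/2}(\Gamma)$ and $\tfrac{\partial u_k}{\partial\nu}\in L^2(\Gamma)$; this is granted by the regularity discussion preceding the proposition, so the argument is complete.
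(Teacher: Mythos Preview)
Your proof is correct and follows essentially the same route as the paper: both reduce the statement to the abstract Mittag--Leffler expansion (Theorem~\ref{ML}) together with the key identity $E_k(\Pi\psi,u_k)=-\int_\Gamma \tfrac{\partial u_k}{\partial\nu}\psi\,dS$, which is obtained via two applications of Green's formula using the Jerison--Kenig regularity $u_k,\Pi\psi\in H^{3/2}(\Om)$. One small remark: in your first application of Green's formula (to conclude $\int_\Om\nabla u_k\cdot\nabla\Pi\psi\,dx=0$) you implicitly need $\Pi\psi\in H^{3/2}(\Om)$ as well, which holds by the same regularity argument since $\Pi\psi$ is harmonic---alternatively, this vanishing is immediate from $\Pi\psi\in\Har$ and $u_k\in\ker J$ without invoking Green at all.
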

\begin{proof}
We claim that for all $\psi\in H^{1/2}(\Gamma)$ and $k$ we have
\begin{equation}
(\Pi\psi,u_k)= -\frac{1}{E_k}\int_\Gamma \frac{\partial u_k}{\partial\nu}\psi\,dS.
\label{PiProduct}
\end{equation}
Once identity (\ref{PiProduct}) has been proved, the result would follow from  Theorem \ref{ML}.\\
Let us prove (\ref{PiProduct}). From the above discussion we have $\Pi\psi,u_k\in H^{3/2}(\Om)$. Thus utilizing Green formula we obtain
\begin{align*}
\int_\Om (-\Delta u_k)\cdot \Pi\psi\,dx&=E_k\int_\Om u_k \Pi\psi\,dx
=\int_\Om\nabla u_k\cdot\nabla \Pi\psi\,dx - \int_\Gamma \frac{\partial u_k}{\partial\nu}\psi\,dS\\
&=\int_\Om (-\Delta \Pi\psi)\cdot u_k\,dx + \int_\Gamma \frac{\partial \Pi\psi}{\partial\nu} u_k\,dS
-\int_\Gamma \frac{\partial u_k}{\partial\nu}\psi\,dS\\
&=-\int_\Gamma \frac{\partial u_k}{\partial\nu}\psi\,dS.
\end{align*}
This leads to
\[
(\Pi\psi,u_k) = -E_k^{-1}\int_\Gamma \frac{\partial u_k}{\partial\nu}\psi\,dS,
\]
and the claim is proved.

\end{proof}
\begin{theo}
Let $\lam>E_0$ and $E$ be an eigenvalue of the Dirichlet Laplacian with multiplicity $m$. Let $(v_1,\cdots,v_m)$ be an orthonormal basis for $\ker(L_\Dir-E)$. Then $\ctl,\ t>0$ is p.p. on a left (resp. right) neighborhood of $E$ if and only if one of the following conditions is fulfilled:
\begin{enumerate}
\item
\[
\sum_{k=1}^m (\Pi\psi^+,v_k)\cdot(\Pi\psi^-,v_k)\geq 0,\ (\text{resp.} \leq 0), \forall\,\psi\in L^2(\Gamma).
\]
\item
\[
\sum_{k=1}^m (\int_\Gamma\frac{\partial v_k}{\partial\nu}\psi^+\,dS)\cdot(\int_\Gamma \frac{\partial v_k}{\partial\nu} , \psi^-\,dS)
\geq 0,\ (\text{resp.} \leq 0), \forall\,\psi\in L^2(\Gamma).
\]
\end{enumerate}

It follows, in particular that if there is $\psi_1,\psi_2\in L^2(\Gamma)$ are such that the above corresponding sums have opposite signs then $\ctl$ is not p.p. in any neighborhood of $E$.
\label{Pos-Multiple2}
\end{theo}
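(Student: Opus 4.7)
The plan is to apply the abstract positivity criterion Theorem \ref{Positivity-Multiple} in the present Lipschitz setting and then translate its condition into the boundary-integral form via the identity already established in the proof of Proposition \ref{ML-Lipschitz}.

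First I would verify that the hypotheses of Theorem \ref{Positivity-Multiple} hold here. The gradient form $\calE$ on $H^1(\Om)$ is positive, closed, densely defined and is a Dirichlet form; $\ker J = H_0^1(\Om)$ is dense in $L^2(\Om)$ and $\calE_\Dir$ is closed with compact resolvent and discrete spectrum; the direct sum decomposition (\ref{lambda-direct}), the lower semiboundedness (\ref{lambda-lsb}) (Lemma \ref{lsb-Lip}), and the closedness (\ref{lambda-closed}) (Proposition \ref{Pi-continuity} together with the closedness proposition just after it) of $\cel$ were all established earlier in this subsection. Since the pointwise trace commutes with taking absolute values, assumption (\ref{J-positivity}) holds. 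Hence Theorem \ref{Positivity-Multiple} applies verbatim and yields that $\ctl$ is p.p. on a left (resp.\ right) neighborhood of $E$ if and only if condition (1) holds.

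Second, to deduce the equivalence of (1) and (2), I would invoke identity (\ref{PiProduct}) from the proof of Proposition \ref{ML-Lipschitz}: for each eigenfunction $v_k \in \ker(L_\Dir - E)$ and each $\psi \in H^{1/2}(\Gamma)$,
\begin{equation*}
(\Pi\psi, v_k)_{L^2(\Om)} = -\frac{1}{E} \int_\Gamma \frac{\partial v_k}{\partial \nu}\,\psi\,dS.
\end{equation*}
Substituting this for both $\psi^+$ and $\psi^-$ yields
\begin{equation*}
\sum_{k=1}^m (\Pi\psi^+, v_k)(\Pi\psi^-, v_k)
= \frac{1}{E^2}\sum_{k=1}^m \Bigl(\int_\Gamma \frac{\partial v_k}{\partial \nu}\,\psi^+\,dS\Bigr)\Bigl(\int_\Gamma \frac{\partial v_k}{\partial \nu}\,\psi^-\,dS\Bigr).
\end{equation*}
Since $E > E_0 > 0$ the factor $E^{-2}$ is strictly positive, so the two sums carry the same sign, proving (1) $\Leftrightarrow$ (2). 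The final ``in particular'' assertion is then immediate: if $\psi_1,\psi_2 \in L^2(\Gamma)$ make the corresponding sums have opposite signs, then neither inequality (with fixed direction) can hold for every $\psi$, so positivity fails on both sides of $E$.

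The main subtlety I foresee is purely bookkeeping: the quantifier ``$\forall\,\psi \in L^2(\Gamma)$'' in the statement versus the natural domain $\ran J = H^{1/2}(\Gamma)$ on which $\Pi$ and the Beurling--Deny test for $\cel$ live. The Beurling--Deny criterion needs only to be checked on $\dom \cel = H^{1/2}(\Gamma)$, which is dense in $L^2(\Gamma)$; moreover, both sides of (1) extend continuously to $\psi \in L^2(\Gamma)$ via Lemma \ref{Pi-continuity}, and both sides of (2) extend continuously to $\psi \in L^2(\Gamma)$ because $\partial v_k/\partial\nu \in L^2(\Gamma)$ (as noted before Proposition \ref{ML-Lipschitz}, using \cite{Jerison-Kenig}). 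Thus each of the two conditions can be tested equivalently on $H^{1/2}(\Gamma)$ or on $L^2(\Gamma)$, and the theorem reduces to combining Theorem \ref{Positivity-Multiple} with the computation (\ref{PiProduct}).
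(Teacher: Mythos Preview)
Your proposal is correct and follows essentially the same route as the paper: apply the abstract criterion Theorem \ref{Positivity-Multiple}, use the Green-formula identity (\ref{PiProduct}) to show that the two sums differ only by the positive factor $1/E^2$, and then pass from $H^{1/2}(\Gamma)$ to $L^2(\Gamma)$ by density and continuity. Your treatment of the $H^{1/2}$ versus $L^2$ quantifier is in fact more explicit than the paper's, which simply invokes ``continuity of the scalar product together with the fact that $H^{1/2}(\Gamma)$ is dense in $L^2(\Gamma)$.''
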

\begin{proof}
Let $\lam$, $E$ and $(v_1,\cdots,v_m)$ be as in the theorem and $\psi\in H^{1/2}(\Gamma)$. From Green formula we get
\[
(\Pi\psi^+,v_k)\cdot(\Pi\psi^-,v_k) = \frac{1}{E^2}(\int_\Gamma \frac{\partial v_k}{\partial\nu}\psi^+\,dS)
\cdot(\int_\Gamma \frac{\partial v_k}{\partial\nu}\psi^-\,dS).
\]
Hence both sums appearing in the statement of the theorem are equal up to the positive factor $1/{E^2}$.
By Theorem \ref{Positivity-Multiple} positivity holds if and only if one of the equivalent  conditions $1-2$ hold for every $\psi\in H^{1/2}(\Gamma)$. Finally, the continuity of the scalar product together with the fact that $H^{1/2}(\Gamma)$ is dense in $L^2(\Gamma)$ gives the result, which completes the proof.
\end{proof}
\begin{coro}
Let $E$ be a simple eigenvalue of $L_\Dir$ with associated normalized eigenfunction $u_E$. Assume that $u_E$ or $\frac{\partial u_E}{\partial\nu}$ can be chosen to have constant sign. Then $\ctl$ is not p.p. in any  right neighborhood of $E$ whereas it is p.p in $\{\lam\colon\,-r_E<\lam-E<0\}$.
\label{SignNormalDeriv}
\end{coro}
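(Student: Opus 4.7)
The plan is to specialize Theorem \ref{Pos-Multiple2} to a simple singularity, where the sum of products over an orthonormal eigenbasis reduces to a single product with a definite sign. With $m=1$ and $v_1=u_E$, the two equivalent conditions of Theorem \ref{Pos-Multiple2} collapse to the single-product requirement
\[
(\Pi\psi^+, u_E)_{L^2(\Om)}\cdot(\Pi\psi^-, u_E)_{L^2(\Om)}\ge 0\quad(\text{resp.}\ \le 0),\quad \forall\,\psi\in L^2(\Gamma),
\]
which, via identity (\ref{PiProduct}) and up to the positive factor $1/E^2$, is the same as the analogous sign condition on the two boundary integrals $\int_\Gamma (\partial u_E/\partial\nu)\,\psi^\pm\,dS$.

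I would next verify that this single product is non-negative for every $\psi$ under either hypothesis, so as to land in the $\ge 0$ branch of Theorem \ref{Pos-Multiple2}. If $u_E\ge 0$ on $\Om$ (the case $u_E\le 0$ being symmetric), Lemma \ref{Pi-positivity} gives $\Pi\psi^\pm\ge 0$, hence each inner product $(\Pi\psi^\pm,u_E)_{L^2(\Om)}$ is non-negative and so is their product. If instead $\partial u_E/\partial\nu$ has constant sign on $\Gamma$, then since $\psi^\pm\ge 0$ the two boundary integrals inherit that common sign and again the product is non-negative. Theorem \ref{Pos-Multiple2} then yields p.p.\ of $\ctl$ on a left neighborhood of $E$; the right-sided non-p.p.\ assertion follows immediately from the remark after Theorem \ref{Positivity-Multiple}, which states that $\ctl$ cannot be p.p.\ on both sides of a singularity simultaneously.

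The main obstacle I anticipate is pinning down the explicit left neighborhood as $\{\lam:\,-r_E<\lam-E<0\}$ rather than merely \emph{some} left neighborhood provided by the abstract statement. For this I would invoke the Laurent expansion of Theorem \ref{AbstractLaurent}, valid on the whole disc $|z-E|<r_E$. After polarization, the singular contribution $\frac{\lam E}{\lam-E}(\Pi\psi^+,u_E)(\Pi\psi^-,u_E)$ is non-positive throughout $(E-r_E,E)$ by the sign analysis above, and together with the semi-Dirichlet bound $\ccalE(\psi^+,\psi^-)\le 0$ from Lemma \ref{O-p.p.} it must be shown to dominate the convergent regular remainder on the entire interval. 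Since the remainder terms carry alternating signs from the factor $(\lam-E)^k$, this will require a careful Cauchy--Schwarz estimate on each Laurent coefficient combined with the geometric decay granted by the radius of convergence.
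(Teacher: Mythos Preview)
Your core argument --- specializing Theorem \ref{Pos-Multiple2} to $m=1$ and verifying that the single product $(\Pi\psi^+,u_E)(\Pi\psi^-,u_E)$ (equivalently, via identity (\ref{PiProduct}), the product of the two boundary integrals) is non-negative for every $\psi$, using Lemma \ref{Pi-positivity} when $u_E$ has constant sign and the obvious sign argument when $\partial u_E/\partial\nu$ does --- is exactly the paper's proof. The paper's argument is two lines invoking precisely these ingredients and then citing Theorem \ref{Pos-Multiple2}; your appeal to the remark after Theorem \ref{Positivity-Multiple} for the right-sided non-p.p.\ conclusion is equally valid.

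You are right that Theorem \ref{Pos-Multiple2} (and the underlying Theorem \ref{Positivity-Multiple}) only delivers p.p.\ on \emph{some} left neighborhood of $E$, not the explicit interval $(E-r_E,E)$. The paper's own proof does not address this either: it stops after invoking Theorem \ref{Pos-Multiple2}. So the appearance of the specific radius $r_E$ in the corollary's statement is a loose end in the paper itself, not a defect in your reasoning.

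Your proposed repair, however, cannot work. The plan is to have the singular term $\tfrac{\lam E}{\lam-E}(\Pi\psi^+,u_E)(\Pi\psi^-,u_E)$ dominate the regular Laurent remainder \emph{uniformly in $\psi$} on all of $(E-r_E,E)$. For any fixed $\lam$ in that interval the prefactor $\tfrac{\lam E}{\lam-E}$ is a finite constant, while the product $(\Pi\psi^+,u_E)(\Pi\psi^-,u_E)$ can be made arbitrarily small by choosing $\psi$ so that $\Pi\psi^+$ (or $\Pi\psi^-$) is nearly $L^2(\Om)$-orthogonal to $u_E$; the remaining Laurent terms, which involve inner products with \emph{other} spectral data of $L_\Dir$, need not become small simultaneously and carry no a priori sign. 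Hence no Cauchy--Schwarz or geometric-decay bound on the coefficients can yield the required uniform inequality. What the argument genuinely produces is p.p.\ on a left neighborhood whose size is not pinned down by $r_E$ alone; the occurrence of $r_E$ in the statement should be read as the natural scale of the Laurent picture rather than as a proved interval of positivity.
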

\begin{proof}
If $u_E$ has a constant sign, owing to positivity of $\Pi$ we get $(\Pi\psi^+,u_E)\cdot(\Pi\psi^-,u_E)\geq 0$. Whereas if $\frac{\partial u_E}{\partial\nu}$ has constant sign we obtain $(\int_\Gamma \frac{\partial u_E}{\partial\nu}\psi^+\,dS)\cdot(\int_\Gamma \frac{\partial u_E}{\partial\nu}\psi^-\,dS)\geq 0$, and the result follows from Theorem \ref{Pos-Multiple2}.
\end{proof}
\subsection{The D-to-N on the unit disc revisited}

For the unit disc the eigenvalues of the Dirichlet Laplacian are $j_{k,l}^2,\ k,l\in\N_0:=\N\cup\{0\}$ where $j_{k,l}$ are the positive zeros of the Bessel functions $J_k$. They are either simple or double eigenvalues. The set of simple eigenvalues consists of $\{j_{0,l}^2,\ l\in \N_0\}$ with associated normalized eigenfunctions
\[
u_{0,l}(r):= c_lJ_0(j_{0,l}r),\ c_l=  \frac{\sqrt 2}{ |J_0'(j_{0,l})| }.
\]
The set of double eigenvalues consists of $\{j_{k,l}^2,\ k\in\N,l\in \N_0 \}$ with associated normalized eigenfunctions
\begin{eqnarray}
u_{k,l}(r):= c_{k,l}J_k(j_{k,l}r)\cos(k\theta),\ v_{k,l}(r):= c_{k,l}J_k(j_{k,l}r)\sin(k\theta),\   c_{k,l}=\frac{\sqrt 2}{\sqrt{\pi} |J_k'(j_{k,l})| }.
\label{Normalize}
\end{eqnarray}
Let us mention that for $v_{k,l}$ the integer $k$ runs $\N$.\\
In order to compute Mittag--Leffler expansion we have to compute the normal derivatives of the eigenfunctions. Obviously if $E$ is an eigenvalue of the Dirichlet Laplacian on the unit disc with associated normalized eigenfunction $u_E$ then
\[
\frac{\partial u_E}{\partial\nu} = \frac{\partial u_E}{\partial r}|_{r=1}= c\sqrt E J_0'(\sqrt E).
\]
Thus
\begin{equation}
 \frac{\partial u_{0,l}}{\partial\nu}= c_l j_{0,l} J_0'(j_{0,l}),\ \frac{\partial u_{k,l}}{\partial\nu}
 =c_{k,l} j_{k,l} J_k'(j_{k,l})\cos(k\theta),\
 \frac{\partial v_{k,l}}{\partial\nu} = c_{k,l} j_{k,l} J_k'(j_{k,l})\sin(k\theta).
\label{Noraml-Deriv}
\end{equation}
\begin{prop}
Let $\psi\in H^{1/2}(\Gamma)$. Then
\begin{align*}
\ccalE_z[\psi] &=\ccalE[\psi] + \sum_{l=0}^\infty \frac{2z}{ (z- j_{0,l}^2)} (\int_0^{2\pi} \psi(\theta)\,d\theta)^2
+ \sum_{k=1,l=0}^\infty \frac{2z}{  \pi (z- j_{k,l}^2)} (\int_0^{2\pi} \cos(k\theta)\psi(\theta)\,d\theta)^2\\
&+ \sum_{k=1,l=0}^\infty \frac{2z}{ \pi (z- j_{k,l}^2)} (\int_0^{2\pi} \sin(k\theta)\psi(\theta)\,d\theta)^2.
\label{ForMLDisc}
\end{align*}
\label{MLDisc}
\end{prop}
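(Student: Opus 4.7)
\medskip

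\noindent\textbf{Proof proposal.} The plan is to simply specialize the general Mittag--Leffler expansion from Proposition \ref{ML-Lipschitz} to the unit disc, where the Dirichlet eigenvalues and eigenfunctions are known explicitly. Recall that formula reads
\[
\ccalE_z[\psi] = \ccalE[\psi]  + \sum_{k=0}^\infty \frac{z}{E_k(z-E_k)}\Big(\int_\Gamma \frac{\partial u_k}{\partial\nu}\psi\,dS\Big)^2,
\]
and for the unit disc the eigenvalues are $\{j_{k,l}^2\colon k,l\in\N_0\}$ with the orthonormal eigenbasis $\{u_{0,l}\}\cup\{u_{k,l},v_{k,l}\colon k\geq 1\}$ described just before the proposition. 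The surface measure on $\Gamma=\partial\mathbb{D}$ is $dS=d\theta$, so every boundary integral reduces to an integral over $[0,2\pi]$.

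First, I would split the sum according to whether the eigenvalue is simple or double. For the simple eigenvalues $j_{0,l}^2$, formula (\ref{Noraml-Deriv}) gives that $\partial_\nu u_{0,l}=c_l j_{0,l}J_0'(j_{0,l})$ is a constant on $\Gamma$, hence
\[
\Big(\int_\Gamma \partial_\nu u_{0,l}\,\psi\,dS\Big)^2
=c_l^2\,j_{0,l}^2\,(J_0'(j_{0,l}))^2\Big(\int_0^{2\pi}\psi(\theta)\,d\theta\Big)^2.
\]
The normalization $c_l=\sqrt 2/|J_0'(j_{0,l})|$ in (\ref{Normalize}) makes $c_l^2(J_0'(j_{0,l}))^2=2$, so after dividing by $E_l=j_{0,l}^2$ the prefactor collapses exactly to $2z/(z-j_{0,l}^2)$.

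For the double eigenvalues $j_{k,l}^2$ with $k\geq 1$, I would perform the same computation for both $u_{k,l}$ and $v_{k,l}$: by (\ref{Noraml-Deriv}) the two boundary integrals become $c_{k,l}j_{k,l}J_k'(j_{k,l})\int_0^{2\pi}\cos(k\theta)\psi\,d\theta$ and $c_{k,l}j_{k,l}J_k'(j_{k,l})\int_0^{2\pi}\sin(k\theta)\psi\,d\theta$ respectively. Squaring, dividing by $E_{k,l}=j_{k,l}^2$, and using the normalization $c_{k,l}^2(J_k'(j_{k,l}))^2=2/\pi$ produces the coefficient $2z/(\pi(z-j_{k,l}^2))$ in front of each of the two Fourier-type integrals, which is what appears in the statement. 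Summing the three contributions over the appropriate index sets yields the formula.

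The only step requiring any care is the bookkeeping: making sure every Dirichlet eigenvalue is counted once per orthonormal eigenfunction (so that the double eigenvalues correctly contribute two terms, one cosine and one sine), and that the normalization constants neatly cancel against the factors $(J_k'(j_{k,l}))^2$ coming from $\partial_r J_k(j_{k,l}r)|_{r=1}=j_{k,l}J_k'(j_{k,l})$. There is no genuine analytic obstacle; convergence of the series and the identity itself are inherited from Proposition \ref{ML-Lipschitz}, and the claim reduces to an explicit algebraic evaluation of its coefficients.
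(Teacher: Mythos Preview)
Your proposal is correct and follows essentially the same route as the paper: both start from the general Mittag--Leffler expansion of Proposition~\ref{ML-Lipschitz}, substitute the explicit normal derivatives from~(\ref{Noraml-Deriv}), and then simplify using the normalization constants $c_l$ and $c_{k,l}$ to obtain the stated coefficients. The only difference is cosmetic: you organize the computation eigenvalue-by-eigenvalue (simple versus double) whereas the paper writes out the full sum first and then simplifies, but the substance is identical.
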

\begin{proof}
Let $\psi\in H^{1/2}(\Gamma)$.  Making use of Mittag-Leffler expansion from Proposition \ref{ML-Lipschitz} together with the normal derivatives (\ref{Noraml-Deriv}) we obtain
\begin{align}
\ccalE_z[\psi] &=\ccalE[\psi] + z\sum_l \frac{1}{j_{0,l}^2 (z- j_{0,l}^2)} (c_l j_{0,l} J_0'(j_{0,l}))^2 (\int_0^{2\pi} \psi(\theta)\,d\theta)^2\nonumber\\
&+ z\sum_{k=1,l=0}^\infty \frac{1}{j_{k,l}^2 (z- j_{k,l}^2)}(c_{k,l} j_{k,l} J_k'(j_{k,l}))^2 (\int_0^{2\pi} \cos(k\theta)\psi(\theta)\,d\theta)^2\nonumber\\
&+ z\sum_{k=1,l=0}^{\infty} \frac{1}{j_{k,l}^2 (z- j_{k,l}^2)}(c_{k,l} j_{k,l} J_k'(j_{k,l}))^2 (\int_0^{2\pi} \sin(k\theta)\psi(\theta)\,d\theta)^2
\end{align}
Taking the expression of $c_l$ and $c_{k,l}$ from (\ref{Noraml-Deriv}) into account leads to the sought formula.
\end{proof}
The eigenvalues as well as the eigenfunctions  of $\check{L}_z$ were computed in \cite{Daners}.
The eigenvalues are
\[
\check{E}_k(z) = k - \frac{\sqrt\lam J_{k+1}(\sqrt z)}{J_k(\sqrt z)},\ k\in\N_0,
\]
with normalized eigenfunctions $\psi_k$:
\[
\frac{1}{\sqrt{2\pi}},\ \frac{1}{\sqrt \pi}\cos(k\theta),\ \frac{1}{\sqrt\pi}\sin(k\theta),\ k\in\N.
\]
We quote that $\check{E_0}(\lam)$ is the sole simple eigenvalue whereas all others are double eigenvalues. Moreover the eigenfunctions are $z$-independent.\\
Let us quote that one can rediscover the values of the eigenfunctions and the eigenvalues of $\check{L}_z$ on the light of formula (\ref{ForMLDisc}).\\
We shall use Mittag--Leffler expansion for $\ccalE_z$ from Proposition (\ref{MLDisc}) to write Mittag--Leffler expansion for $\check{E}_k(z)$.
\begin{prop}
Let $\check{E}_k(z)$ be an eigenvalue of $\check{L}_\lam$ with eigenfunction $\psi_k$. Then
\begin{eqnarray}
\check{E}_k(z) = k + \sum_{l=0}^\infty \frac{2z}{z- j_{k,l}^2},
\label{EigenDisc}
\end{eqnarray}
where the series converges absolutely.
\end{prop}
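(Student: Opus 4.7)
The plan is to evaluate the Mittag--Leffler expansion from Proposition \ref{MLDisc} at the known eigenfunction $\psi_k$ of $\check{L}_z$ and exploit the orthogonality of the trigonometric basis to collapse the multi-indexed sum into a single series over $l$. Since $\psi_k$ is normalized in $L^2(\Gamma)$, we have the identity
\[
\check{\mathcal{E}}_z[\psi_k]=\bigl(\check{L}_z\psi_k,\psi_k\bigr)_{L^2(\Gamma)}=\check{E}_k(z),
\]
so the left-hand side of (\ref{ForMLDisc}) evaluated at $\psi=\psi_k$ already produces $\check{E}_k(z)$.

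The computation on the right-hand side of (\ref{ForMLDisc}) splits into two cases. For $k\geq 1$ and $\psi_k=\frac{1}{\sqrt{\pi}}\cos(k\theta)$ (respectively $\frac{1}{\sqrt{\pi}}\sin(k\theta)$), the orthogonality relations
\[
\int_0^{2\pi}\psi_k(\theta)\,d\theta=0,\qquad \int_0^{2\pi}\cos(k'\theta)\psi_k(\theta)\,d\theta=\sqrt{\pi}\,\delta_{k',k},\qquad \int_0^{2\pi}\sin(k'\theta)\psi_k(\theta)\,d\theta=0
\]
kill every term in the triple series of Proposition \ref{MLDisc} except the single-indexed subseries corresponding to $k'=k$. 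After squaring the surviving inner product $\sqrt{\pi}$, the prefactor $\frac{2z}{\pi(z-j_{k,l}^2)}$ combines with $\pi$ to yield exactly $\sum_{l=0}^\infty \frac{2z}{z-j_{k,l}^2}$. For $k=0$ the analogous calculation with $\psi_0=\frac{1}{\sqrt{2\pi}}$ leaves only the first sum in (\ref{ForMLDisc}), and again produces $\sum_{l=0}^\infty \frac{2z}{z-j_{0,l}^2}$.

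It remains to identify the constant term $\check{\mathcal{E}}[\psi_k]$ with $k$. This is the Rayleigh quotient of $\psi_k$ with respect to the Laplacian's D-to-N operator $\check{L}_0$ on the unit disc. The harmonic extension of $\cos(k\theta)$ (resp.\ $\sin(k\theta)$) is the polynomial $r^k\cos(k\theta)$ (resp.\ $r^k\sin(k\theta)$), whose outward normal derivative at $r=1$ equals $k\cos(k\theta)$ (resp.\ $k\sin(k\theta)$). Consequently $\check{L}_0\psi_k=k\psi_k$ for every $k\in\N_0$, so $\check{\mathcal{E}}[\psi_k]=k$.

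Collecting the three contributions gives
\[
\check{E}_k(z)=\check{\mathcal{E}}_z[\psi_k]=\check{\mathcal{E}}[\psi_k]+\sum_{l=0}^{\infty}\frac{2z}{z-j_{k,l}^{2}}=k+\sum_{l=0}^{\infty}\frac{2z}{z-j_{k,l}^{2}},
\]
and absolute convergence is inherited from the Mittag--Leffler expansion of Theorem \ref{ML}. The only slightly delicate point is the bookkeeping in Proposition \ref{MLDisc}: one must be careful with the normalizing constants of the radial eigenfunctions $u_{0,l}$ (which contribute a prefactor different from the $u_{k,l}$ and $v_{k,l}$ with $k\geq 1$), so that the $k=0$ case is handled with the correct factor of $2\pi$ coming from $(\int_0^{2\pi}\psi_0\,d\theta)^2=2\pi$. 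No further estimate is required.
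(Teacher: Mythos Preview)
Your proof is correct and follows essentially the same route as the paper: evaluate the Mittag--Leffler expansion of Proposition~\ref{MLDisc} at the normalized eigenfunction $\psi_k$, use trigonometric orthogonality to reduce the triple sum to a single sum in $l$, and identify the constant term $\check{\mathcal E}[\psi_k]=k$. Your write-up is in fact slightly more explicit than the paper's, since you justify $\check{\mathcal E}[\psi_k]=k$ via the harmonic extension $r^k\cos(k\theta)$ (resp.\ $r^k\sin(k\theta)$) and flag the separate bookkeeping for the $k=0$ normalization, whereas the paper simply invokes the known value of $\check E_k(0)$.
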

We mention that formula (\ref{EigenDisc}) was established in \cite{Daners}, using properties of Bessel functions. We shall give an other proof.
\begin{proof}
In case $k=0$, then $\psi_0=\frac{1}{\sqrt{2\pi}}$. Hence by formula (\ref{ForMLDisc}) we get
\[
\check{E}_0(z) =  \sum_{l=0}^\infty \frac{2z}{ (z- j_{0,l}^2)}.
\]
In case $k\neq 0$, then either $\psi_k(\theta)= \frac{1}{\sqrt \pi}\cos(k\theta)$ or $\psi_k(\theta)=\frac{1}{\sqrt\pi}\sin(k\theta)$. Thus in both cases we have $\int_0^{2\pi} \psi_k\,d\theta=0$. Whereas in case $\psi_k(\theta)= \frac{1}{\sqrt \pi}\cos(k\theta)$ we get
\[
\int_0^{2\pi} \cos(n\theta)\psi_k(\theta)\,d\theta= \sqrt{\pi}\delta_{n,k},\    \int_0^{2\pi} \sin(n\theta)\psi_k(\theta)\,d\theta=0.
\]
Similar formulae hold in case  $\psi_k(\theta)= \frac{1}{\sqrt \pi}\sin(k\theta)$. Thus in both case we get
\[
\check{E}_k(z) = \ccalE[\psi_k] + \sum_{l=1}^\infty \frac{2z}{z - j_{k,l}^2} = k + \sum_{l=1}^\infty \frac{2z}{z - j_{k,l}^2},
\]
which completes the proof.

\end{proof}
We turn our attention to analyze positivity of $\ctl$ for $\lam>E_0$.
\begin{prop}
Let $E$ be a simple Dirichlet eigenvalue of $L_\Dir$. Then the semigroup is p.p. on every small left neighborhood of $E$ whereas it is non-p.p. on any right neighborhood of $E$.
\end{prop}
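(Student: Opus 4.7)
The plan is to invoke Corollary \ref{SignNormalDeriv} directly, by exploiting the rotational symmetry of the disc. According to the previous analysis, the simple Dirichlet eigenvalues on $\mathbb{D}$ are precisely the numbers $j_{0,l}^2$, $l\in\N_0$, with associated normalized eigenfunctions $u_{0,l}(r)=c_l J_0(j_{0,l}r)$; these are purely radial. Hence, on the unit circle $\Gamma$,
\[
\frac{\partial u_{0,l}}{\partial\nu}=\frac{\partial u_{0,l}}{\partial r}\bigg|_{r=1}=c_l j_{0,l} J_0'(j_{0,l}),
\]
which is a (nonzero) \emph{constant} function of $\theta$. In particular, $\frac{\partial u_E}{\partial\nu}$ has constant sign on $\Gamma$ for any simple Dirichlet eigenvalue $E$.

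Consequently the hypothesis of Corollary \ref{SignNormalDeriv} is met, which immediately yields both conclusions: $\ctl$ is p.p. on the left neighborhood $\{\lam\colon -r_E<\lam-E<0\}$, and is not p.p. on any right neighborhood of $E$.

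There is essentially no obstacle beyond verifying that a simple Dirichlet eigenvalue indeed forces the eigenfunction to be radial, which is already recorded in the classification of eigenvalues preceding Proposition \ref{MLDisc} (the angular modes $\cos(k\theta)$ and $\sin(k\theta)$ with $k\geq 1$ always come in pairs, so only the $k=0$ modes give rise to simple eigenvalues). Thus no appeal to sign properties of $J_0$ itself is needed --- the rotational symmetry alone delivers the constant, hence constant-sign, normal derivative required by Corollary \ref{SignNormalDeriv}.
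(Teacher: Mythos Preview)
Your approach is essentially the same as the paper's: both recognize that a simple Dirichlet eigenvalue on the disc has a radial eigenfunction $u_E(r)=cJ_0(\sqrt{E}\,r)$, compute the normal derivative as the constant $c\sqrt{E}\,J_0'(\sqrt{E})$, and then invoke Corollary~\ref{SignNormalDeriv}.

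There is one small gap, however. You assert parenthetically that this constant is nonzero, and then claim that ``no appeal to sign properties of $J_0$ itself is needed --- the rotational symmetry alone delivers the constant, hence constant-sign, normal derivative.'' But rotational symmetry only tells you the normal derivative is \emph{constant}; it does not by itself rule out the value zero, and an identically vanishing normal derivative would make the leading coefficient in Theorem~\ref{Pos-Multiple2} equal to zero for every $\psi$, so neither conclusion (p.p.\ on the left, non-p.p.\ on the right) would follow. The paper closes this gap explicitly by noting that $J_0'(\sqrt{E})\neq 0$ because the positive zeros of $J_0$ are all simple. If you prefer to avoid Bessel-function facts, you can instead invoke unique continuation: a Dirichlet eigenfunction with both $u_E|_\Gamma=0$ and $\partial_\nu u_E|_\Gamma\equiv 0$ would extend by zero to a compactly supported $H^2$-eigenfunction of $-\Delta$ on $\R^2$, which is impossible. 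Either way, the nonvanishing requires a word of justification.
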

\begin{proof}
For the unit disc if $E$ is a simple eigenvalue then the associated normalized eigenfunction is radially symmetric and is of the type
\[
u_E(r)=cJ_0(\sqrt E r).
\]
Plainly
\[
\frac{\partial u_E}{\partial\nu} = \frac{\partial u_E}{\partial r}|_{r=1}= c\sqrt E J_0'(\sqrt E)\neq 0,
\]
because $J_0$ has only simple positive zeros. Hence $\frac{\partial u_E(r)}{\partial\nu}$ has constant sign and  on the light of Corollary \ref{SignNormalDeriv} we get the result.
\end{proof}
The latter proposition was established  in \cite[Theorem 1.1]{Daners}, however with a different proof.\\
Let us now turn our attention to double eigenvalues. Let $E$ be a double eigenvalue for $L_\Dir$ and $(u_1,u_2)$ be an orthonormal basis for $\ker (L_\Dir -E)$. Then
\[
u_1(r,\theta)=c_1J_m({\sqrt E}r)\cos(m\theta),\  u_2(r,\theta)=c_2J_m({\sqrt E}r)\sin(m\theta).
\]
Hence
\[
\frac{\partial u_1}{\partial\nu} = \frac{\partial u_1}{\partial r}|_{r=1}= -mc_1{\sqrt E} J_m'(\sqrt E)\sin(m\theta),\  \frac{\partial u_2}{\partial\nu} = \frac{\partial u_2}{\partial r}|_{r=1}= mc_2{\sqrt E} J_m'(\sqrt E)\cos(m\theta).
\]
They both change signs many times. Thus we are led to use the last  assertion of Theorem \ref{Pos-Multiple2} to handle the question.\\
The following result was already proved in \cite{Daners}. We shall prove it by using our method.
\begin{prop}
The semigroup $\ctl$ is non p.p. on any neighborhood of any double eigenvalue.
\end{prop}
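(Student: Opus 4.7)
The plan is to invoke the last assertion of Theorem \ref{Pos-Multiple2}: it suffices to exhibit two test functions $\psi_1,\psi_2\in L^2(\Gamma)$ for which the quantity
\[
\Sigma(\psi):=\sum_{k=1}^{2}\Bigl(\int_\Gamma \tfrac{\partial v_k}{\partial\nu}\,\psi^{+}\,dS\Bigr)\Bigl(\int_\Gamma \tfrac{\partial v_k}{\partial\nu}\,\psi^{-}\,dS\Bigr)
\]
has opposite signs, where $(v_1,v_2)$ is the orthonormal eigenbasis of $\ker(L_\Dir-E)$ given in the discussion preceding the statement. Since the normal derivatives of $v_1,v_2$ equal a common positive constant $C=C(E,m)$ times $\sin(m\theta)$ and $\cos(m\theta)$ respectively, one has $\Sigma(\psi)=C^{2}\bigl(A(\psi^{+})A(\psi^{-})+B(\psi^{+})B(\psi^{-})\bigr)$, with
\[
A(\psi):=\int_0^{2\pi}\cos(m\theta)\psi(\theta)\,d\theta,\qquad B(\psi):=\int_0^{2\pi}\sin(m\theta)\psi(\theta)\,d\theta.
\]
The task therefore reduces to producing one $\psi$ for which this bilinear expression is negative and another for which it is positive.

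For the first, I would take $\psi_1(\theta)=\cos(m\theta)$. The set $\{\cos(m\theta)>0\}$ is a union of intervals each symmetric about a point where $\sin(m\theta)$ is odd, so $B(\psi_1^{+})=B(\psi_1^{-})=0$ by symmetry, while $A(\psi_1^{+})>0$ and $A(\psi_1^{-})<0$ because these are the integrals of $\cos(m\theta)$ over the sets where it is respectively positive and negative. Hence $\Sigma(\psi_1)<0$.

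For the second, I would place two small disjoint arcs $I_1,I_2\subset\Gamma$ of length $2\varepsilon$ centered at $\theta=0$ and $\theta=2\pi/m$ respectively, and set $\psi_2=\mathbf{1}_{I_1}-\mathbf{1}_{I_2}$, so that $\psi_2^{+}=\mathbf{1}_{I_1}$ and $\psi_2^{-}=\mathbf{1}_{I_2}$. By symmetry of $I_1$ about $0$ and of $I_2$ about $2\pi/m$, the sine integrals $B(\psi_2^{\pm})$ both vanish, while $\cos(m\theta)$ attains the value $1$ at both centers, giving
\[
A(\psi_2^{+})=A(\psi_2^{-})=\tfrac{2\sin(m\varepsilon)}{m}>0.
\]
Therefore $\Sigma(\psi_2)=C^{2}\bigl(\tfrac{2\sin(m\varepsilon)}{m}\bigr)^{2}>0$.

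Since $\Sigma(\psi_1)$ and $\Sigma(\psi_2)$ have opposite signs, the concluding part of Theorem \ref{Pos-Multiple2} yields that $\check T_t(\lambda)$ fails to be positivity preserving on any neighborhood of the double eigenvalue $E$. The only subtle point is choosing $\psi_2$ so that both positive and negative parts align in the same direction of the plane $(A,B)$; concentrating the bumps at two distinct zeros of the phase makes this transparent via the symmetry-driven vanishing of $B(\psi_2^{\pm})$.
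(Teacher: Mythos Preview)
Your first test function $\psi_1=\cos(m\theta)$ is essentially the paper's choice (the paper takes $\psi_1=\partial u_1/\partial\nu$, which is a constant multiple of $\cos(m\theta)$ or $\sin(m\theta)$), and your symmetry argument for $B(\psi_1^\pm)=0$ is the same computation the paper carries out. For the second test function the two proofs diverge: the paper stays with trigonometric functions and performs an explicit Fourier-type computation (its indexing is somewhat tangled in print), whereas your use of two short arcs is more geometric and makes the underlying mechanism---aligning the $(A,B)$-vectors of $\psi_2^+$ and $\psi_2^-$---completely transparent.

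There is, however, one genuine oversight. For $m=1$ your two centers $\theta=0$ and $\theta=2\pi/m=2\pi$ coincide on the circle, so $I_1=I_2$ and $\psi_2\equiv 0$; the double eigenvalues $j_{1,l}^2$ are therefore not covered by your construction as written. The repair is trivial: replace the second center $2\pi/m$ by any point $\theta_0$ with $2\varepsilon<\theta_0<\pi/(2m)$. Then $I_1$ and $I_2$ are disjoint for every $m\ge 1$, one still has $B(\psi_2^+)=0$ exactly by symmetry of $I_1$ about $0$, and for small $\varepsilon$
\[
A(\psi_2^+)A(\psi_2^-)+B(\psi_2^+)B(\psi_2^-)\;\approx\;(2\varepsilon)\cdot(2\varepsilon\cos(m\theta_0))+0\cdot(2\varepsilon\sin(m\theta_0))\;=\;4\varepsilon^2\cos(m\theta_0)>0,
\]
since $0<m\theta_0<\pi/2$. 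With this minor adjustment your argument is complete and arguably more conceptual than the paper's explicit trigonometric calculation.
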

\begin{proof}
Take $\psi_k= \frac{\partial u_k}{\partial\nu},\ k=1,2$. Then
\[
\int_\Gamma\psi_k \psi_k^+\,dS = \int_\Gamma (\psi_k^+)^2\,dS>0,\  \int_\Gamma\psi_k \psi_k^-\,dS
= - \int_\Gamma (\psi_k^-)^2\,dS<0.
\]
On the other hand, owing to the $L^2(\Gamma)$ orthogonality of $\psi_1,\psi_2$ we get
\begin{align*}
\int_\Gamma\psi_2 \psi_1^+\,dS = \int_\Gamma\psi_2 \psi_1^-\,dS.
\end{align*}
A straightforward computation leads to (up to a constant)
\begin{align*}
\int_\Gamma\psi_2 \psi_1^+\,dS &= \int_0^{2\pi}  \cos(m\theta)\sin^+(m\theta)\,d\theta=
 \frac{1}{m}\int_0^{2m\pi}  \cos(\theta)\sin^+(\theta)\,d\theta\\
 &=\frac{1}{m} \sum_{k=0}^{2m-1} \int_{k}^{(k+1)\pi}  \cos(\theta)\sin^+(\theta)\,d\theta
 = \frac{1}{m}\sum_{k=0}^{m-1} \int_{2k\pi}^{(2k+1)\pi}  \cos(\theta)\sin(\theta)\,d\theta=0.
 \end{align*}
Putting all together we obtain
\[
\sum_{k=1}^2 \big(\int_\Gamma \frac{\partial u_k}{\partial\nu}\psi_1^+\,dS\big)\cdot\big(\int_\Gamma \frac{\partial u_k}{\partial\nu}\psi_1^-\,dS\big)
<0.
\]
Thus, according to Theorem \ref{Pos-Multiple2}, $\ctl$ is non p.p. on a left neighborhood of $E$.\\
To show that $\ctl$ is non p.p. on a right neighborhood of $E$ we take $\psi_m(\theta)=\cos(m\theta)$. Set
\[
I_1^\pm=\int_\Gamma \cos((m+1)\theta)\psi^\pm(\theta)\,d\theta,\
I_2^\pm=\int_\Gamma \sin((m+1)\theta)\psi^\pm(\theta)\,d\theta.
\]
A lengthy computation leads to
\[
I_1^+= \frac{2m+3}{2(2m+1)}\sin(\frac{\pi}{2m}) - \frac{1}{2}.
\]
On can easily check that $I_1^+\neq 0$. By orthogonality consideration we obtain $I_1^-=I_1^+$ and hence  $I_1^-I_1^+>0$.\\
Similarly we get $I_2^+=I_2^-$ and then  $I_2^+I_2^-\geq 0$. Summarizing we get
\[
\sum_{k=1}^2 \big(\int_\Gamma \frac{\partial u_k}{\partial\nu}\psi_{m-1}^+\,dS\big)\cdot\big(\int_\Gamma \frac{\partial u_k}{\partial\nu}\psi_{m-1}^-\,dS\big)
>0.
\]
Once again, according to Theorem \ref{Pos-Multiple2}, $\ctl$ is non p.p. on a right  neighborhood of $E$.

\end{proof}

\subsection{The D-to-N on the square}

Compared to the unit disc we shall see that for the square the picture changes drastically regarding positivity. In fact, we shall show that p.p. fails to hold true in any neighborhood of any Dirichlet  eigenvalue except the smallest one. Therefore regularity of the boundary affect positivity of the semigroup.\\
We consider the square $\Om=(0,1)\times(0,1)$. It is known that the eigenvalues of the Dirichlet Laplacian on the square  are
\[
E_{m,n}=\pi^2(m^2 + n^2),\ m,n\in\N,
\]
with associated normalized eigenfunctions
\[
u_{m,n}(x,y)=2\sin(m\pi x)\sin(n\pi y).
\]
The eigenvalues are either simple or double eigenvalues. Moreover, the eigenfunction associated to the smallest eigenvalue $E_0:=E_{1,1}$ can be chosen to be positive.\\
The normal derivatives of $u_{m,n}$ are, respectively
\begin{eqnarray*}
\frac{\partial u_{m,n}}{\partial\nu} = 2\pi\cdot\left\{\begin{gathered}
 -n \sin(m\pi x)\ \text{on}\ (0,1)\times\{0\}\\
(-1)^{m} m \sin(n\pi y)\ \text{on}\ \{1\}\times(0,1)\\
n (-1)^{n} \sin(m\pi x)   \ \text{on}\  (0,1)\times\{1\}\\
-m \sin(n\pi y)\ \text{on}\ \{0\}\times(0,1)
\end{gathered}.
\right .
\end{eqnarray*}
They are in $L^2(\Gamma)$ and all change sign except for $m=n=1$.\\
For  simple eigenvalues $E_{m,m},\ m\geq 2$ the normal derivatives are
\begin{eqnarray*}
\frac{\partial u_{m,m}}{\partial\nu} = 2m\pi\cdot\left\{\begin{gathered}
 - \sin(m\pi x)\ \text{on}\ (0,1)\times\{0\}\\
(-1)^{m}  \sin(m\pi y)\ \text{on}\ \{1\}\times(0,1)\\
 (-1)^{m} \sin(m\pi x)   \ \text{on}\  (0,1)\times\{1\}\\
-m \sin(m\pi y)\ \text{on}\ \{0\}\times(0,1)
\end{gathered}.
\right .
\end{eqnarray*}
We know from Proposition \ref{Positivity-Under} that $\ctl$ is p.p. to the left $E_{1,1}$. Thus by Theorem \ref{Pos-Multiple2} it is non p.p. on  a left neighborhood of $E_{1,1}$.
\begin{prop}
Let $E_{m,n}>E_0$ be an eigenvalue of the Dirichlet Laplacian on the square. Then $\ctl$ is non p.p. in any neighborhood of $E_{m,n}$.
\end{prop}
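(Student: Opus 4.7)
The strategy is to invoke Corollary \ref{Left-Right}: to conclude that $\ctl$ fails to be p.p.\ in any neighborhood of $E := E_{m,n}$, it suffices to exhibit two boundary data $\psi_1, \psi_2 \in L^2(\Gamma)$ for which the sum
\[
S(\psi) := \sum_{k} \bigl(\int_\Gamma \tfrac{\partial v_k}{\partial\nu}\,\psi^+\,dS\bigr)\bigl(\int_\Gamma \tfrac{\partial v_k}{\partial\nu}\,\psi^-\,dS\bigr)
\]
takes strictly opposite signs, where $\{v_k\}$ is an orthonormal basis of $\ker(L_\Dir - E)$ (with one or two elements, according to the multiplicity of $E$ on the square).

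To produce a $\psi_2$ with $S(\psi_2) > 0$, let $N := \max(m,n)$ and pick two disjoint subintervals $A_1, A_2$ of the bottom edge contained in $(0, 1/N)$. Set $\psi_2 := \chi_{A_1} - \chi_{A_2}$. On the support of $\psi_2$ both $\sin(m\pi x)$ and $\sin(n\pi x)$ are strictly positive, so each of $\partial u_{m,n}/\partial\nu$ and $\partial u_{n,m}/\partial\nu$ is of one constant sign there, and every integral $\int_\Gamma \partial v_k/\partial\nu \cdot \psi_2^\pm\,dS$ is strictly negative. Hence every product in $S(\psi_2)$ is strictly positive.

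To produce a $\psi_1$ with $S(\psi_1) < 0$, the idea is to exploit the reflection symmetries of $\Omega$. In the simple case $m = n \geq 2$, $\partial u_{m,m}/\partial\nu$ changes sign on $\Gamma$; writing $\Gamma_\pm := \{\pm\partial u_{m,m}/\partial\nu > 0\}$ and taking $\psi_1 := \chi_{\Gamma_+} - \chi_{\Gamma_-}$ immediately yields $S(\psi_1) < 0$. In the double case $m \neq n$, the subtlety is to cancel the cross-contribution of the second eigenfunction. When $m, n$ have distinct parities, take $\psi_1$ antisymmetric under $y \leftrightarrow 1-y$ (supported on the top and bottom edges with common profile $f$ up to sign) and choose $f$ so that $|f|$ is orthogonal to $\sin(k\pi x)$, where $k$ is the $x$-frequency of the $y$-symmetric eigenfunction; this single linear constraint kills that eigenfunction's contribution exactly, while the remaining term evaluates to a strictly negative product. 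When $m, n$ share parity, use $\psi_1$ antisymmetric under the diagonal reflection $x \leftrightarrow y$, whence a short calculation based on identity (\ref{PiProduct}) of Proposition \ref{ML-Lipschitz} reduces $S(\psi_1)$ to $2(\Pi\psi_1^+, u_{m,n})(\Pi\psi_1^+, u_{n,m})$; choosing $\psi_1^+$ concentrated near a boundary point where $\sin(m\pi x)$ and $\sin(n\pi x)$ have opposite signs (possible since $\max(m,n) \geq 2$) makes this product negative.

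The main obstacle is the construction of $\psi_1$ in the double-eigenvalue case: a naive single-edge choice always produces a dominant positive contribution from the ``wrong'' eigenfunction. Overcoming it requires the parity-based symmetry analysis above, which uses the reflection symmetries of the square to decouple the contributions of the two eigenfunctions of $E$.
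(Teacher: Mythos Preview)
Your plan is correct and would yield a complete proof, but it differs from the paper's argument in the concrete test functions used at nearly every step.

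For $S(\psi)>0$ the paper works case by case: in the simple case $E_{m,m}$ it writes down an explicit $\psi$ piecewise on the four edges (with separate computations for $m$ even and $m$ odd), and in the double case it takes $\psi=\partial u_{m+1,n+1}/\partial\nu$, then uses orthogonality and an explicit evaluation of $\int_0^1\sin(m\pi x)\sin^+((m+1)\pi x)\,dx$ together with a monotonicity argument. Your localization device $\psi_2=\chi_{A_1}-\chi_{A_2}$ with $A_1,A_2\subset(0,1/N)$ on the bottom edge is cleaner and handles simple and double eigenvalues uniformly with no computation at all; this is a genuine simplification.

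For $S(\psi)<0$ in the double case, the paper avoids any case split: it takes $\psi_1=\partial u_{m,n}/\partial\nu$, observes that $\partial u_{m,n}/\partial\nu$ and $\partial u_{n,m}/\partial\nu$ are $L^2(\Gamma)$-orthogonal so that $\int_\Gamma (\partial u_{n,m}/\partial\nu)\,\psi_1^+\,dS=\int_\Gamma (\partial u_{n,m}/\partial\nu)\,\psi_1^-\,dS$, and then checks by a direct calculation that this common value actually vanishes. Your route via the reflection symmetries of the square is more structural, but it forces the parity dichotomy (distinct parities: reflection in $y\leftrightarrow 1-y$; equal parities: diagonal reflection), and in the distinct-parity branch the constraint you impose is on $|f|$, not on $f$, so calling it ``a single linear constraint'' is slightly imprecise---though easily fixed, e.g.\ by taking $|f|\equiv 1$ since the relevant $x$-frequency is even and $\int_0^1\sin(k\pi x)\,dx=0$.

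In short: both proofs use Theorem~\ref{Pos-Multiple2}/Corollary~\ref{Left-Right} and hinge on exhibiting two $\psi$'s with opposite signs of $S$. Your $S>0$ argument is shorter and more uniform than the paper's; your $S<0$ argument is more conceptual but slightly longer due to the parity split, whereas the paper's single choice $\psi_1=\partial u_{m,n}/\partial\nu$ works in one stroke.
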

\begin{proof}
Let $E_{m,m}$ be a simple eigenvalue of $L_\Dir$ with $m\geq 2$.\\
For any $\psi\in L^2(\Gamma)$, we set
\[
I^\pm(\psi)=\int_\Gamma \frac{\partial u_{m,m}}{\partial\nu}\psi^\pm\,dS.
\]
{\em First case:} $m$ is even. For the  choice $\psi = \frac{\partial u_{m,m}}{\partial\nu}$ owing to the fact that the normal derivative changes sign  we get
\begin{equation}
I^+(\psi)I^-(\psi)<0.
\label{Step1}
\end{equation}
Let $\psi\in L^2(\Gamma)$. An elementary computation shows that for even $m$ we have
\begin{align*}
I^\pm(\psi) &=\int_\Gamma \frac{\partial u_{m,m}}{\partial\nu}\psi^\pm\,dS=
2m\pi \int_0^1 \sin(m\pi x) (\psi^\pm(1,x) - \psi^\pm(x,1))\,dx\\
&+ 2m\pi  \int_0^1 \sin(m\pi x) (\psi^\pm(0,x) - \psi^\pm(x,0))\,dx.
\end{align*}
We consider the case $m=2$, the proof general even $m$ is similar. Let us choose now
\[
\psi(x,0)=\cos(\pi x),\ \psi(1,x)=x,\ \psi(x,1)=\cos(\pi x),\ \psi(0,x)=x.
\]
Then with this choice of $\psi$ we obtain (up to a factor)
\begin{align*}
I^+(\psi)&=\int_0^{1/2} \sin(2\pi x)(x-\cos(\pi x))\,dx + \int_{1/2}^1 x\sin(2\pi x)\,dx
\\
&+ \int_0^{1/2} \sin(2\pi x)(x-\cos(\pi x))\,dx + \int_{1/2}^1 x\sin(2\pi x)\,dx\\
 &= 2\int_0^1 x\sin(2\pi x)\,dx - 2\int_0^{1/2} \sin(2\pi x)\cos(\pi x)\,dx\\
&= -\frac{1}{2\pi} - \frac{4}{3\pi}.
\end{align*}
Whereas
\[
I^-(\psi)= 2\int_{1/2}^1 \sin(2\pi x)\cos(\pi x)\,dx = -\frac{4}{3\pi}.
\]
Thus
\begin{equation}
I^+(\psi) I^-(\psi) > 0
\label{Step2}
\end{equation}
Inequalities \ref{Step1}-\ref{Step2} in conjunction with Theorem \ref{Pos-Multiple2} yield that $\ctl$ is non p.p. in any neighborhood of $E_{m,m}$.\\
{\em Second case:} $m$ is odd. Choosing $\psi$ as in the first step we get $I^+(\psi) I^-(\psi)<0$.\\
When choosing
\[
\psi(x,0)=1=\psi(1,x),\ \psi(x,1)=\psi(0,x)=-1,
\]
we get $I^+(\psi)=I^-(\psi)=4$ and then $I^+(\psi)I^-(\psi)>0$. Finally an application of Theorem \ref{Pos-Multiple2} yields the first assertion.\\
Now let $E_{m,n}$ be a double eigenvalue with eigenfunctions $v_1,v_2$. As $\frac{\partial v_1}{\partial\nu}$ changes sign, taking $\psi_1= \frac{\partial v_1}{\partial\nu}$ we obtain $\int_\Gamma \psi_1 \psi_1^+\,dS>0$ and  $\int_\Gamma \psi_1 \psi_1^-\,dS<0$.\\
Let us emphasize that  $\frac{\partial v_1}{\partial\nu}, \frac{\partial v_2}{\partial\nu}$ are $L^2(\Gamma)$-orthogonal. Thereby,   $\int_\Gamma  \frac{\partial v_2}{\partial\nu}\psi_1^+\,dS=\int_\Gamma  \frac{\partial v_2}{\partial\nu}\psi_1^-\,dS$. Furthermore an elementary computations leads to $\int_\Gamma  \frac{\partial v_2}{\partial\nu}\psi_1^\pm\,dS=0$.\\
Owing to these considerations we achieve
\[
\sum_{k=1}^2 (\int_\Gamma  \frac{\partial v_k}{\partial\nu}\psi_1^+\,dS)\cdot (\int_\Gamma\frac{\partial v_k}{\partial\nu}\psi_1^-\,dS)< 0.
\]
Once again, by Theorem \ref{Pos-Multiple2} we conclude that $\ctl$ is non p.p. to the left of $E_{m,n}$.\\
To prove non positivity to the right we follow the strategy we used for the unit disc. Let us choose
\[
\psi_{m,n}= \frac{\partial u_{m+1,n+1}}{\partial\nu}.
\]
Set
\[
I_1^\pm = \int_\Gamma \frac{\partial u_{m,n}}{\partial\nu} \psi_{m,n}^\pm\,dS,\
I_2^\pm = \int_\Gamma \frac{\partial u_{n,m}}{\partial\nu} \psi_{m,n}^\pm\,dS.
\]
Orthogonality leads to $I_{1,2}^+=I_{1,2}^-$. Moreover, an elementary computation leads to
\begin{align*}
 I_1^+&= 2n(n+1)\int_0^1 \sin(m\pi x)\sin^+((m+1)\pi x)\,dx\\
& -2m(m+1) \int_0^1 \sin(n\pi x)\sin^+((n+1)\pi x)\,dx\\
&= \frac{ mn(n+1)(2m-1)}{(2m+1)\pi}\sin(\frac{\pi}{m+1})
- \frac{ mn(m+1)(2n-1)}{(2n+1)\pi}\sin(\frac{\pi}{n+1}).
\end{align*}
On the other hand the functions
\[
[1,\infty)\to\R,\ x\mapsto \frac{2x-1}{(2x+1)(x+1)}\ \text{and}\ x\mapsto\sin(\frac{\pi}{x+1}),
\]
are both positive and strictly decreasing. Hence $I_1^+\neq 0$. Putting all together we achieve $I_1^+ I_1^- + I_2^+ I_2^->0$. According to Theorem \ref{Pos-Multiple2}, $\ctl$ is non p.p. to the right of $E_{m,n}$ and the proof is finished.

\end{proof}

\subsection{The D-to-N on the unit ball}

We denote by $B$ the unit ball in $\R^3$. The eigenvalues of the Dirichlet Laplacian on $B$ are squares of the positive zeros of the modified spherical  Bessel functions of the first kind
\[
j_n(z):=\sqrt{\frac{\pi}{2z}}J_{n+1/2}(z).
\]
They coincide with squares of positive zeros of $J_{n+1/2}$ and shall be enumerated
\[
j_{nk}^2,\ k,n\in\N_0:=\N\cup\{0\},
\]
with respective multiplicities $2n+1$ and normalized  eigenfunctions
\[
u_{nkl}:= c_{nkl}j_n(j_{nk}r)Y_{nl}(\theta,\varphi),\ |l|\leq n;\
 c_{nkl}=\frac{\sqrt 2}{|j_n'(j_{nk})|}.
\]
where $Y_{nl}$ are the normalized spherical harmonics.\\
The respective normal derivatives are:
\[
\frac{\partial u_{nkl}}{\partial\nu}= \frac{\partial u_{nkl}}{\partial r}|_{r=1}= c_{nkl} j_{nk} j_n'(j_{nk})Y_{nl}(\theta,\varphi).
\]
\begin{prop}
Let $z\in\C\setminus\sigma_e(L_\Dir)$ and $\psi\in H^{1/2}(\Gamma)$. Then
\begin{equation}
\ccalE_z[\psi] = \ccalE[\psi] +  \sum_{k=0}^\infty \sum_{n=0}^\infty \sum_{l=-n}^n \frac{ 2z}{  (z - j_{nk}^2 )} \big|\int_\Gamma Y_{nl}\psi\,dS\big|^2.
\label{MLFormBall}
\end{equation}
\end{prop}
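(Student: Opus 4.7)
The plan is to specialize the general Mittag--Leffler expansion from Proposition \ref{ML-Lipschitz} to the Dirichlet spectral data of the unit ball, using the explicit normal derivatives recorded just before the statement. Since every Dirichlet eigenvalue on $B$ is of the form $E=j_{nk}^2$ with orthonormal eigenbasis $\{u_{nkl}:\,|l|\leq n\}$, enumerating the summation in Proposition \ref{ML-Lipschitz} over the three indices $(k,n,l)$ gives
\[
\ccalE_z[\psi]=\ccalE[\psi]+\sum_{k=0}^\infty\sum_{n=0}^\infty\sum_{l=-n}^{n}\frac{z}{j_{nk}^{2}(z-j_{nk}^{2})}\Bigl(\int_\Gamma \frac{\partial u_{nkl}}{\partial\nu}\,\psi\,dS\Bigr)^{2},
\]
with absolute convergence inherited from that proposition.

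Next I would plug in the identity $\frac{\partial u_{nkl}}{\partial\nu}=c_{nkl}\,j_{nk}\,j_n'(j_{nk})\,Y_{nl}$ stated above the proposition. Squaring and using $c_{nkl}^{2}=2/(j_n'(j_{nk}))^{2}$ produces
\[
\Bigl(\int_\Gamma \frac{\partial u_{nkl}}{\partial\nu}\,\psi\,dS\Bigr)^{2}=c_{nkl}^{2}\,j_{nk}^{2}\,(j_n'(j_{nk}))^{2}\Bigl|\int_\Gamma Y_{nl}\,\psi\,dS\Bigr|^{2}=2\,j_{nk}^{2}\,\Bigl|\int_\Gamma Y_{nl}\,\psi\,dS\Bigr|^{2}.
\]
Substituting into the previous display makes the factor $j_{nk}^{2}$ cancel against the $j_{nk}^{2}$ in the denominator, leaving exactly the coefficient $\tfrac{2z}{z-j_{nk}^{2}}$ from the desired formula (\ref{MLFormBall}).

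There is essentially no obstacle beyond bookkeeping: the only point that requires a brief justification is that one may freely re-index the series in Proposition \ref{ML-Lipschitz} by the triple $(n,k,l)$, which is legitimate because absolute convergence of the original sum permits any rearrangement, and because the family $\{u_{nkl}\}$ indeed exhausts an orthonormal basis of Dirichlet eigenfunctions with the given multiplicities $2n+1$. This yields the stated identity for every $\psi\in H^{1/2}(\Gamma)$ and every $z\in\C\setminus\sigma_e(L_\Dir)$.
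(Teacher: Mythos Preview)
Your argument is correct and follows the same route as the paper: apply Proposition \ref{ML-Lipschitz} with the Dirichlet eigendata of the ball re-indexed by $(n,k,l)$, and then insert the explicit normal derivatives together with the normalization $c_{nkl}^2=2/(j_n'(j_{nk}))^2$ to obtain the factor $2$ and the cancellation of $j_{nk}^2$. The paper's proof is simply a terser version of what you wrote.
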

\begin{proof}
According to Proposition \ref{ML-Lipschitz} we have
\[
\ccalE_z[\psi] = \ccalE[\psi] + z \sum_{k=0}^\infty \sum_{n=0}^\infty \sum_{l=-n}^n \frac{1}{ j_{nk}^2 (z- j_{nk}^2 )} \big| \int_\Gamma \frac{\partial u_{nkl}}{\partial\nu}\psi\,dS\big|^2.
\]
Inserting the expression of the normal derivatives of the Dirichlet eigenfunctions in the latter identity gives the result.
\end{proof}
We shall use the series expansion (\ref{MLFormBall}) to derive Mittag--Leffler expansion for the eigenvalues of $\check{L}_z$ .
\begin{theo}
For any $z\in\C\setminus\sigma_e(L_\Dir)$ it holds:
\begin{enumerate}
\item The spherical harmonics are the eigenfunctions of $\check{L}_z$.
\item For each $n\in\N_0$ the spherical harmonics $Y_{nl},\ |l|\leq n$ are normalized eigenfunctions of an eigenvalue   $\check{E}_n(z)$ of $\check{L}_z$. Moreover,
\begin{eqnarray}
 \check{E}_n(z) = n + \sum_{k= 0}^\infty\frac{2z}{z - j_{nk}^2},
 \label{MLBall}
\end{eqnarray}
where the series converges absolutely.
\end{enumerate}
\label{SpecSphere}
\end{theo}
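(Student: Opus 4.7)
The plan is to read off the eigenvalue equation directly from the Mittag--Leffler expansion (\ref{MLFormBall}), using the orthonormality of spherical harmonics on $\Gamma=S^2$ to collapse the triple sum.

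First, I would polarize (\ref{MLFormBall}) to obtain, for all $\psi_1,\psi_2\in H^{1/2}(\Gamma)$,
\[
\ccalE_z(\psi_1,\psi_2)= \ccalE(\psi_1,\psi_2)+\sum_{n=0}^\infty\sum_{k=0}^\infty\sum_{l=-n}^{n}\frac{2z}{z-j_{nk}^2}\,\overline{(Y_{nl},\psi_1)_{L^2(\Gamma)}}\,(Y_{nl},\psi_2)_{L^2(\Gamma)}.
\]
Setting $\psi_1=Y_{nl}$ and invoking the orthonormality $(Y_{n'l'},Y_{nl})_{L^2(\Gamma)}=\delta_{nn'}\delta_{ll'}$, all terms in the triple sum vanish except those with fixed $n$ and the particular $l$, and in fact only the $k$-sum survives because $(Y_{nl'},Y_{nl})=\delta_{ll'}$. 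This reduces the identity to
\[
\ccalE_z(Y_{nl},\psi)=\ccalE(Y_{nl},\psi)+\Big(\sum_{k=0}^\infty\frac{2z}{z-j_{nk}^2}\Big)\,(Y_{nl},\psi)_{L^2(\Gamma)},\quad\forall\,\psi\in H^{1/2}(\Gamma).
\]

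The second step is to identify $\ccalE(Y_{nl},\psi)=n\,(Y_{nl},\psi)_{L^2(\Gamma)}$. For this I would note that the harmonic extension of $Y_{nl}$ to $B$ is the solid spherical harmonic $\Pi Y_{nl}(r,\theta,\varphi)=r^n Y_{nl}(\theta,\varphi)$, so by Green's identity (as recorded just before Proposition \ref{ML-Lipschitz}) and $\partial_r(r^nY_{nl})|_{r=1}=nY_{nl}$,
\[
\ccalE(Y_{nl},\psi)=\int_B\nabla(r^nY_{nl})\cdot\nabla\Pi\psi\,dx=\int_\Gamma nY_{nl}\,\psi\,dS,
\]
since $-\Delta(r^nY_{nl})=0$ in $B$. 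Combining with the previous identity yields
\[
\ccalE_z(Y_{nl},\psi)=\check{E}_n(z)\,(Y_{nl},\psi)_{L^2(\Gamma)},\qquad \check{E}_n(z):=n+\sum_{k=0}^\infty\frac{2z}{z-j_{nk}^2},
\]
valid for every $\psi\in\dom\ccalE_z=H^{1/2}(\Gamma)$. By Kato's representation theorem this means $Y_{nl}\in\dom(\check L_z)$ and $\check L_z Y_{nl}=\check E_n(z) Y_{nl}$, which proves both assertions once absolute convergence of the series is checked.

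For the convergence, I would invoke the Weyl-type asymptotics $j_{nk}=(k+n/2+O(1))\pi$ as $k\to\infty$ (equivalently, the interlacing of zeros of Bessel functions), so that $|z-j_{nk}^2|^{-1}=O(k^{-2})$ and the series $\sum_k 2z/(z-j_{nk}^2)$ converges absolutely and locally uniformly on $\C\setminus\sigma_e(L_\Dir)$. The only delicate point I anticipate is the clean derivation $\ccalE[Y_{nl}]=n$; everything else is a direct reading of the already-established Mittag--Leffler formula combined with the orthonormality of the $Y_{nl}$ on $S^2$. Completeness of the system $\{Y_{nl}\}$ in $L^2(\Gamma)$ shows that there are no further eigenfunctions.
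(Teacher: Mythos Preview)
Your proposal is correct and follows essentially the same route as the paper: polarize the Mittag--Leffler expansion (\ref{MLFormBall}), use orthonormality of the $Y_{nl}$ to collapse the triple sum, and identify $\ccalE(Y_{nl},\psi)=n\,(Y_{nl},\psi)_{L^2(\Gamma)}$ via the harmonic extension $\Pi Y_{nl}=r^nY_{nl}$ and Green's formula. Your added remarks on absolute convergence (via Bessel-zero asymptotics) and completeness of the $Y_{nl}$ are welcome supplements but not departures from the paper's argument.
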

\begin{proof}
We first compute the eigenfunctions and the eigenvalues of $\ccalE$.\\
Making use of spherical coordinates we get that  for any $Y_{nl}$ the solution of
\begin{eqnarray}
\left\{\begin{gathered}
-\Delta u =0, \quad \hbox{in } B,\\
u= Y_{nl},~~~{\rm on}\  \Gamma
\end{gathered}
\right.
\end{eqnarray}
is given by $r^nY_{nl}$. Hence $\Pi (Y_{nl})=r^nY_{nl}$ and $\ccalE(Y_{nl},\psi)=\calE(\Pi Y_{nl},\Pi\psi)=n\int_\Gamma Y_{nl}\psi\,dS$, by Green's formula, for any $\psi\in H^{1/2}(\Gamma)$. This shows that $n$ is an eigenvalue of $\ccalE$ of order $2n+1$ with normalized eigenfunction $Y_{nl},\ |l|\leq n$.\\
Let $\psi\in H^{1/2}(\Gamma)$, by polarization and since the $Y_{nl}$ are an orthonormal basis for $L^2(\Gamma)$ we get
\begin{align*}
\ccalE_z(Y_{mp},\psi)&= \ccalE(Y_{mp},\psi) +  \sum_{k=0}^\infty \sum_{n=0}^\infty \sum_{l=-n}^n \frac{2z}{  (z - j_{nk}^2 )} \big( \int_\Gamma Y_{nl}\psi\,dS\big)  \big( \int_\Gamma Y_{nl} \bar{Y}_{mp}\,dS\big)\\
&=m + \sum_{k=0}^\infty \frac{ 2z }{  (z - j_{mk}^2 )} \big( \int_\Gamma Y_{mp}\psi\,dS\big)
\end{align*}
For every $p\in\N_0$ and every $|m|\leq p$, $Y_{mp}$ is an eigenfunction of $\ccalE_z$ associated to the eigenvalue
\[
m + \sum_{k=0}^\infty \frac{ 2z}{  (z - j_{mk}^2 )},
\]
which  is of order $2m+1$.
\end{proof}
\begin{rk}
{\rm
On the light of Proposition \ref{MLDisc} and Theorem \ref{SpecSphere}, it seems that formula (\ref{MLBall}) should be true for every dimension $d\geq 2$.
}
\end{rk}
Let us now investigate positivity of $\ctl,\ \lam>E_0$. We first record that owing to our general result form Corollary \ref{SignNormalDeriv} the semigroup $\ctl$ is p.p. to the left of $E_0=j_{00}^2$ and non p.p. on a right neighborhood of it.
\begin{prop}
Let $j_{nk}^2$ be an eigenvalue of $L_\Dir$. Then
\begin{enumerate}
\item The semigroup  $\ctl$ is p.p. on a left neighborhood of $j_{0k}$ whereas it is non p.p. on a right neighborhood of  $j_{0k}$.
\item The semigroup  $\ctl$ is non p.p. in any neighborhood of $j_{nk}^2$ for $n\geq 1$.
\end{enumerate}
\end{prop}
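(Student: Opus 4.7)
The plan is to split according to multiplicity: $j_{0k}^2$ are simple while $j_{nk}^2$ with $n\geq 1$ have multiplicity $2n+1\geq 3$, so the two parts call on different criteria from the foregoing sections.

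For part 1, the normalized Dirichlet eigenfunction at $j_{0k}^2$ is $u_{0k0}=c_{0k0}j_0(j_{0k}r)Y_{00}$, and since $j_{0k}$ is a simple positive zero of $j_0$ and $Y_{00}$ is a positive constant, its normal derivative
\[
\tfrac{\partial u_{0k0}}{\partial\nu}=c_{0k0}\,j_{0k}\,j_0'(j_{0k})\,Y_{00}
\]
is a non-zero constant on $\Gamma$, hence of constant sign. Corollary \ref{SignNormalDeriv} then immediately yields p.p. on a left neighborhood and non-p.p. on a right neighborhood of $j_{0k}^2$.

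For part 2, I apply Corollary \ref{Left-Right}: it suffices to exhibit $\psi_1,\psi_2\in L^2(\Gamma)$ for which the quantity
\[
\Sigma(\psi):=\sum_{|l|\leq n}\Big(\int_\Gamma Y_{nl}\psi^+\,dS\Big)\Big(\int_\Gamma Y_{nl}\psi^-\,dS\Big),
\]
which by identity (\ref{PiProduct}) is a positive multiple of the sum appearing in Theorem \ref{Pos-Multiple2}, has opposite signs on $\psi_1$ and on $\psi_2$. For the first, take $\psi_1:=Y_{n0}$. Axisymmetry of $\psi_1$ (and hence of $\psi_1^\pm$) together with the azimuthal orthogonality $\int_0^{2\pi}e^{\pm il\varphi}\,d\varphi=0$ for $l\neq 0$ kill every cross-term, leaving
\[
\Sigma(\psi_1)=-\|Y_{n0}^+\|_{L^2(\Gamma)}^2\,\|Y_{n0}^-\|_{L^2(\Gamma)}^2<0,
\]
the strict inequality coming from the fact that $P_n$ has $n\geq 1$ roots in $(-1,1)$, so $Y_{n0}$ really does change sign on $\Gamma$.

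For $\psi_2$ I take it axisymmetric and orthogonal to every $Y_{nl}$, so that the identity $\int Y_{nl}\psi_2^+\,dS=\int Y_{nl}\psi_2^-\,dS$ forces
\[
\Sigma(\psi_2)=\sum_{|l|\leq n}\Big(\int_\Gamma Y_{nl}\psi_2^+\,dS\Big)^2\geq 0,
\]
strictly positive whenever $P_n\psi_2^+\neq 0$, where $P_n$ is the $L^2(\Gamma)$-projection onto the degree-$n$ eigenspace. A single Legendre polynomial $Y_{m,0}$, $m\neq n$, is defeated by the reflection $u\mapsto-u$ acting on $\cos\theta$, which kills $\int_{-1}^1 P_nP_m^+\,du$ as soon as $P_m$ has definite parity. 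The remedy is to mix parities: take $\psi_2:=Y_{n+1,0}+Y_{n+2,0}$, whose two summands have opposite parities under $\cos\theta\mapsto-\cos\theta$, so the hidden reflection symmetry no longer annihilates the pairing. The non-vanishing of $\int_\Gamma Y_{n0}\psi_2^+\,dS$ then reduces to an explicit antiderivative calculation on the sign-intervals of $\alpha P_{n+1}+\beta P_{n+2}$ in $(-1,1)$ (the approach already used for the square and the disc), and this last polynomial computation is the main technical obstacle of the proof.
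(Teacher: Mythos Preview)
Your Part 1 is exactly the paper's argument: the normal derivative of the $n=0$ eigenfunction is a nonzero constant, so Corollary \ref{SignNormalDeriv} applies directly.

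For the left-neighborhood half of Part 2 your choice $\psi_1=Y_{n0}$ is correct but differs from the paper. The paper picks the sectoral harmonic $w_n=cP_n^n(\cos\theta)\sin(n\varphi)$ instead of the zonal one; both yield $\Sigma(\psi)<0$ by the same mechanism (the chosen harmonic changes sign, and its positive/negative parts are orthogonal in $\varphi$, respectively $\theta$, to every other harmonic of the same degree). Your zonal choice is arguably cleaner here since the azimuthal orthogonality is immediate.

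The right-neighborhood half, however, has a real gap. You reduce the question to showing
\[
\int_{-1}^1 P_n(u)\,[\alpha P_{n+1}(u)+\beta P_{n+2}(u)]^{+}\,du\neq 0
\]
and then stop, calling this ``the main technical obstacle''. It is: the zeros of $\alpha P_{n+1}+\beta P_{n+2}$ are not explicit for general $n$, so the ``antiderivative calculation on the sign-intervals'' you invoke cannot be carried out uniformly in $n$ the way it was for the disc and the square (where the sign-intervals were determined by $\sin$ and $\cos$). Without this verification your $\psi_2$ does not yet give $\Sigma(\psi_2)>0$.

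The paper sidesteps this difficulty by exploiting the sectoral rather than the zonal direction. Since $P_n^n(\cos\theta)=c\,|\sin\theta|^n$ is \emph{non-negative}, taking $\psi$ to depend on $\varphi$ only makes every integral factor as a positive $\theta$-integral times a pure $\varphi$-integral of the form $\int_0^{2\pi}\sin(n\varphi)\,[\sin((n{+}1)\varphi)]^{+}\,d\varphi$ (or with $n{+}2$ when $n$ is odd). These one-variable trigonometric integrals have explicit sign-intervals and evaluate to nonzero closed forms, exactly as in the disc case. If you want to keep your axisymmetric approach you must actually prove the Legendre integral above is nonzero for every $n\geq 1$; otherwise, switching to the sectoral harmonic $w_n$ and a $\varphi$-only test function closes the argument with a computation you already know how to do.
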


\begin{proof}
For $n=0$ the normal derivatives of the  eigenfunctions are constant. Hence by Corollary \ref{SignNormalDeriv}  we get assertion 1.\\
In what follows $c$ designates a nonzero generic constant which may differ from line to line.\\
Assume now that $n\geq 1$. Then the normal derivatives of the real eigenfunctions of $j_{nk}^2$ are
\[
v_l:=cP_n^l(\cos\theta)\cos(l\varphi),\ l=0,\cdots,n ,\ w_l:=cP_n^l(\cos\theta)\sin(l\varphi),\ l=1,\cdots,n.
\]
We recall that
\[
P_n^n(\cos\theta)= c|\sin(\theta)|^n,\ w_n=P_n^n(\theta)\sin(n\varphi).
\]
Let us choose $\psi=w_n$. Then
\[
\int_\Gamma w_n w_w^+\,dS>0,\ \int_\Gamma w_n w_n^-\,dS<0.
\]
Let $l=1,\cdots,n-1$. Then $\int_\Gamma v_lw_n\,dS= \int_\Gamma w_lw_n\,dS =0$. Thus
\[
\int_\Gamma v_lw_n^+\,dS=\int_\Gamma v_lw_n^-\,dS,\ \int_\Gamma w_lw_n^-\,dS=\int_\Gamma w_lw_n^+\,dS.
\]
Owing to the orthogonality of Legendre polynomials we obtain, for any $l=1,\cdots,n-1$
\begin{align*}
\int_\Gamma v_l w_n^+\,dS &= \int_0^\pi P_n^l(\cos\theta)P_n^n(\cos\theta)\sin(\theta)\,d\theta
\cdot\int_0^{2\pi}\cos(l\varphi) \sin^+(n\varphi)\,d\varphi\\
&=0 = \int_\Gamma w_l w_n^+\,dS.
\end{align*}
Consequently, according to Theorem \ref{Pos-Multiple2} we conclude that $\ctl$ is non p.p. on left neighborhoods  of $j_{nk}^2$.\\
Let us show non positivity to the right of $j_{nk}^2$.\\
We recall that
\[
P_n^n(\cos\theta)= c|\sin(\theta)|^n,\ w_n=P_n^n(\theta)\sin(n\varphi).
\]
Arguing as in the former examples it suffices to find a function $\psi$ which is $L^2(\Gamma)$ orthogonal to all $v_l,\ w_l$ and  such that  $\int_\Gamma w_n \psi^+\,dS\neq 0$.\\
Assume that $n$ is even. Let us choose $\psi(\varphi)=\sin((n+1)\varphi)$. By a straightforward computation we obtain (up to a constant)
\begin{align*}
\int_\Gamma w_n \psi^+\,dS&=\int_0^\pi\sin(n\varphi)\sin^+((n+1)\varphi)\,d\varphi + \int_0^\pi\sin(n\varphi)\sin^-((n+1)\varphi)\,d\varphi\\
& = 2\int_0^\pi\sin(n\varphi)\sin^+((n+1)\varphi)\,d\varphi = \frac{n(2n-1)}{2(2n+1)}\sin(\frac{\pi}{n+1})\neq 0.
\end{align*}
For odd $n$ we choose $\psi(\varphi)=\sin((n+2)\varphi)$ and obtain
$\int_\Gamma w_n \psi^+\,dS\neq 0$.\\
Once again according to Theorem \ref{Pos-Multiple2} the semigroup $\ctl$ is non p.p. on right neighborhoods of $j_{nk}^2$.

\end{proof}
Finally, according to elliptic regularity,  let us indicate that our method still works for operators of the type
\[
L u=\sum_{i,j=1}^d \frac{\partial}{\partial x_i}(a_{ij}(x) \frac{\partial u}{\partial x_j} ) + V,
\]
such that the matrix $A(x):=(a_{ij}(x))_{ij}$ is  symmetric positive definite and $V=V^+-V^-$ is measurable such that
\[
V^-\ \text{is in the Kato-class and}\   \int_\Om u^2 V^+\,dx<\infty,\ \forall\,u\in H^1(\Om).
\]

\section{The Robin-Laplacian on Lipschitz domains}

Here we shall be concerned with D-to-N operator related to Robin Laplacian on a domain fulfilling all conditions of the latter section. We also keep the notations from the latter sections.\\
Let $\beta\in C(\Gamma)$ be such that $\beta>0$. We consider the quadratic form $Q$  defined by
\[
Q:\ \dom Q= H^1(\Om),\ Q[u] = \int_\Om |\nabla u|^2\,dx + \int_\Gamma \beta u^2\,dS.
\]
It is well know and can be easily proved that $Q$ is a Dirichlet form in $L^2(\Om)$. The selfadjoint operator related to $Q$ which we denote by $-\Delta_{\rm Rob}$ is commonly named the Robin Laplacian:
\begin{align*}
\dom(-\Delta_{\rm Rob}) = \{ u\in H^1(\Om),\ \Delta u\in L^2(\Om),\
\frac{\partial u}{\partial\nu} +\beta u=0\ \text{on}\ \Gamma\},\ -\Delta_{\rm Rob}u=-\Delta u.
\end{align*}
Let the operator $J$ be as in the latter section, the trace to the boundary operator.\\
Plainly all conditions demanded for the validity of  Theorem \ref{SameBehav} are fulfilled by $Q$ and $\calE$, where $\calE$ is the quadratic form related to the Neumann Laplacian. Thus we can likely construct $\check{Q}_\lam$ and obtain thereby a closed densely defined form in $L^2(\Gamma)$ for any $\lam\in\R\setminus\sigma_e(L_\Dir)$.\\
For $\lam\in\R\setminus\sigma_e(L_\Dir)$,  set $\check{S_t}(\lam)$ the semigroup related to $\check{Q}_\lam$.
Accordingly, the p.p. property for $\check{S_t}(\lam)$ can be easily analyzed on the light of Theorem \ref{SameBehav}. In fact,  a straightforward application of Theorem \ref{SameBehav} leads to
\begin{theo}
All results from the latter section regarding $\ctl$ are still valid for $\check{S_t}(\lam)$.
\end{theo}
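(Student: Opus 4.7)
The plan is to realize the theorem essentially as a direct application of Theorem \ref{SameBehav}, completed by a handful of parallel arguments for the global structural properties (sub-Markovianity, ultracontractivity, closedness, compactness) that lie outside the scope of that theorem. The preliminary step is to check the hypotheses of Theorem \ref{SameBehav} for the pair $(Q,J)$. Since $\ker J = H^1_0(\Om)$, every $v\in\ker J$ satisfies $v|_\Gamma = 0$, so the Robin boundary term drops out and
\[
Q(u,v) \;=\; \int_\Om \nabla u\cdot\nabla v\,dx \;=\; \calE(u,v),\quad u\in H^1(\Om),\; v\in H^1_0(\Om).
\]
This single identity already delivers $Q_\Dir = \calE_\Dir$ and $\Har(Q) = \Har$, so the two forms share the same Dirichlet spectrum, the same eigenfunctions, the same Dirichlet operator and the same Poisson kernel operator $\Pi$. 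The remaining assumptions of Theorem \ref{SameBehav} --- positivity (from $\beta>0$), density of domain, closedness, semi-Dirichlet property --- are already recorded for $Q$ at the top of the section.

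Once these hypotheses are in place, I would invoke Theorem \ref{SameBehav} to transfer at once every positivity or non-positivity statement of the previous subsection from $\ctl$ to $\check{S_t}(\lam)$. In particular Theorem \ref{Pos-Multiple2}, Corollary \ref{SignNormalDeriv} and their disc, square and ball specialisations carry over verbatim, because the singular parts of the Mittag--Leffler expansions of $\ccalE_z$ and $\check{Q}_z$ coincide; only the regular parts $\ccalE$ and $\check{Q}$ differ, and that difference is bounded on $\ran J$ and cannot affect the local behavior at any pole.

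For the statements that do not reduce to Theorem \ref{SameBehav} --- sub-Markovianity, positivity for $\lam < E_0$, ultracontractivity for $\lam < 0$, closedness of $\check{Q}_\lam$, compactness of its resolvent --- the strategy is to repeat the proofs of the previous subsection with $\calE$ replaced by $Q$. Sub-Markovianity and positivity for $\lam < E_0$ are immediate from Proposition \ref{Positivity-Under} applied to $Q$, since $Q$ itself is a Dirichlet form. Ultracontractivity for $\lam < 0$ follows from the pointwise bound $Q[u] \geq \calE[u]$ combined with the trace Sobolev estimate (\ref{Sob-Mix}) and Dirichlet principle, giving an inequality of type (\ref{Sob}) for $\check{Q}_\lam$. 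Closedness and the compactness statements of Proposition \ref{Compact} go through by the same representation formula argument, because only the common data $\calE_\Dir$ and $\Pi$ intervene. The sole technical pivot of the whole proof is the identity $\Har(Q) = \Har$; that identity is visible at a glance, and I do not anticipate any substantial obstacle beyond it.
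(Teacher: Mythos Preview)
Your proposal is correct and follows essentially the same approach as the paper: verify that $Q$ and $\calE$ share $\ker J$, $\calE_\Dir$, $\Har$ and hence $\Pi$, then invoke Theorem \ref{SameBehav}. The paper's own argument is in fact terser --- it simply asserts that the hypotheses of Theorem \ref{SameBehav} are ``plainly'' satisfied and records the conclusion --- so your explicit verification of $\Har(Q)=\Har$ and your separate treatment of sub-Markovianity, ultracontractivity, closedness and compact resolvent are more detailed than what the paper supplies, but not a different route.
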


\bibliography{BiblioDtoN}

\begin{thebibliography}{BBST19}

\bibitem[AM12]{Arendt-Mazzeo}
W.~Arendt and R.~Mazzeo.
\newblock Friedlander's eigenvalue inequalities and the
  {D}irichlet-to-{N}eumann semigroup.
\newblock {\em Commun. Pure Appl. Anal.}, 11(6):2201--2212, 2012.

\bibitem[AtE12]{Arendt1}
W.~Arendt and A.~F.~M. ter Elst.
\newblock Sectorial forms and degenerate differential operators.
\newblock {\em J. Operator Theory}, 67(1):33--72, 2012.

\bibitem[BA07]{Benamor07}
A.~Ben~Amor.
\newblock Sobolev-{O}rlicz inequalities, ultracontractivity and spectra of time
  changed {D}irichlet forms.
\newblock {\em Math. Z.}, 255(3):627--647, 2007.

\bibitem[BBST19]{BBST}
H.~BelHadjAli, A.~BenAmor, C.~Seifert, and A.~Thabet.
\newblock On the construction and convergence of traces of forms.
\newblock {\em Journal of Functional Analysis}, 277(5):1334 -- 1361, 2019.

\bibitem[BtE15]{Behrndt-Elst}
J.~Behrndt and A.~F.~M. ter Elst.
\newblock Dirichlet-to-{N}eumann maps on bounded {L}ipschitz domains.
\newblock {\em J. Differential Equations}, 259(11):5903--5926, 2015.

\bibitem[Dan14]{Daners}
D.~Daners.
\newblock Non-positivity of the semigroup generated by the
  {D}irichlet-to-{N}eumann operator.
\newblock {\em Positivity}, 18(2):235--256, 2014.

\bibitem[Dav89]{Davies-book}
E.~B. Davies.
\newblock {\em Heat kernels and spectral theory}, volume~92 of {\em Cambridge
  Tracts in Mathematics}.
\newblock Cambridge University Press, Cambridge, 1989.

\bibitem[FMM98]{Fabes}
E.~Fabes, O.~Mendez, and M.~Mitrea.
\newblock Boundary layers on {S}obolev-{B}esov spaces and {P}oisson's equation
  for the {L}aplacian in {L}ipschitz domains.
\newblock {\em J. Funct. Anal.}, 159(2):323--368, 1998.

\bibitem[JK95]{Jerison-Kenig}
D.~Jerison and C.~E. Kenig.
\newblock The inhomogeneous {D}irichlet problem in {L}ipschitz domains.
\newblock {\em J. Funct. Anal.}, 130(1):161--219, 1995.

\bibitem[JW84]{Jonsson-Wallin}
A.~Jonsson and H.~Wallin.
\newblock Function spaces on subsets of {${\bf R}^n$}.
\newblock {\em Math. Rep.}, 2(1):xiv+221, 1984.

\bibitem[Pos16]{Post}
O.~Post.
\newblock Boundary pairs associated with quadratic forms.
\newblock {\em Math. Nachr.}, 289(8-9):1052--1099, 2016.

\bibitem[Ste70]{Stein}
E.~M. Stein.
\newblock {\em Singular integrals and differentiability properties of
  functions}.
\newblock Princeton Mathematical Series, No. 30. Princeton University Press,
  Princeton, N.J., 1970.

\bibitem[Tar07]{Tartar}
L.~Tartar.
\newblock {\em An introduction to {S}obolev spaces and interpolation spaces},
  volume~3 of {\em Lecture Notes of the Unione Matematica Italiana}.
\newblock Springer, Berlin; UMI, Bologna, 2007.

\end{thebibliography}

\end{document}